\newcommand{\margnote}[1]{
\ifthenelse{\boolean{shownotes}}%
{\marginpar{\raggedright\tiny\texttt{#1}}}%
{}%
}
\newcommand{\hole}[1]{
\ifthenelse{\boolean{shownotes}}%
{\begin{center} \fbox{ \rule {.25cm}{0cm}
\rule[-.1cm]{0cm}{.4cm} \parbox{.85\textwidth}{\begin{center}
\texttt{#1}\end{center}} \rule {.25cm}{0cm}}\end{center}}
{}
}
\numberwithin{equation}{section}
\theoremstyle{plain}
\newtheorem{lemma}{Lemma}[section]
\newtheorem{theorem}[lemma]{Theorem}
\newtheorem{proposition}[lemma]{Proposition}
\newtheorem{corollary}[lemma]{Corollary}
\theoremstyle{definition}
\newtheorem{remark}[lemma]{Remark}
\newtheorem{definition}[lemma]{Definition}
\theoremstyle{remark}
\newcommand{\Id}{\mathbb{I}}
\newcommand{\Q}{\mathbb{Q}}
\newcommand{\A}{\mathbb{A}}
\newcommand{\F}{\mathbb{F}}
\newcommand{\R}{\mathbb{R}}
\newcommand{\C}{\mathbb{C}}
\newcommand{\M}{\mathbb{M}}
\newcommand{\Hm}{\mathbb{H}}
\newcommand{\W}{\mathbb{W}}
\newcommand{\cA}{{\mathcal{A}}}
\renewcommand{\Re}{\mathrm{Re}\,} 
\renewcommand{\Im}{\mathrm{Im}\,}
\newcommand{\tr}{\mathrm{tr}\,}
\newcommand{\sigmaP}{\sigma\eqref{eq:p}}
\newcommand{\sigmaQ}{\sigma\eqref{eq:q}}
\newcommand{\pandq}{\eqref{eq:p} and \eqref{eq:q}}
\newcommand{\re}[1]{\textrm{Re}\left(#1\right)}
\newcommand{\im}[1]{\textrm{Im}\left(#1\right)}
\def\sgn{\mathop{\rm sgn}}
\title[Periodic traveling waves in the sine-Gordon equation]{On the stability analysis of periodic sine-Gordon traveling waves}
\author[C.K.R.T. Jones]{Christopher K. R. T. Jones}
\address[C.K.R.T. Jones]{ Department of Mathematics\\University of North Carolina\\Chapel Hill, NC 27599 (U.S.A.)}
\email{ckrtj@amath.unc.edu}
\author[R. Marangell]{Robert Marangell$^*$}
\thanks{*{\em Corresponding Author:} University of Sydney. Email: robert.marangell@usyd.eud.au. \\Tel: +61 2 9351 5763. Fax: +61 2 9351 4534.}
\address{{\rm (R. Marangell)} School of Mathematics and Statistics F07\\University of Sydney\\Sydney NSW 2006 (Australia)}
\email{robert.marangell@usyd.edu.au}
\author[P.D. Miller]{Peter D. Miller}
\address{{\rm (P. D. Miller)} Department of Mathematics\\University of Michigan\\Ann Arbor, MI 48109 (U.S.A.)}
\email{millerpd@umich.edu}
\author[R.G. Plaza]{Ram\'on G. Plaza}
\address{{\rm (R. G. Plaza)} Departamento de Matem\'aticas y Mec\'anica\\IIMAS-FENOMEC\\Universidad Nacional Aut\'onoma de M\'exico\\Apdo. Postal 20-726, C.P. 01000 M\'exico D.F. (Mexico)}
\email{plaza@mym.iimas.unam.mx}
\keywords{Nonlinear waves; Partial differential equations; Periodic traveling waves; Sine-Gordon equation; Spectral analysis; Stability}
\begin{document}



%
%
%
%
%
%
%
%
%
%
%

\begin{abstract}
We study the spectral stability properties of periodic traveling waves in the sine-Gordon equation, including waves of both subluminal and superluminal propagation velocities as well as waves of both librational and rotational types.  We prove that only subluminal rotational waves are spectrally stable and establish exponential instability in the other three cases.  Our proof corrects a frequently cited one given by Scott \cite{Sco1}.
\end{abstract}

\maketitle



\section{Introduction}

Consider the sine-Gordon equation \cite{Sco2} in laboratory coordinates,
\begin{equation}
\label{eq:sinegordon}
u_{tt} - u_{xx} + \sin u = 0,
\end{equation}
where $u$ is a scalar and $(x,t) \in \R \times [0,+\infty)$. Although it first appeared in the study of the geometry of surfaces with negative Gaussian curvature (see \cite{Eis0}), the sine-Gordon equation describes a great variety of physical phenomena as well, such as the propagation of magnetic flux on a Josephson line \cite{Sco2}, elementary particles \cite{PeSky}, modeling of fermions in the Thirring model \cite{Cole75}, the propagation of crystal dislocations \cite{FreKo}, and the oscillations of an array of rigid pendula rotating under gravity about a common axis with nearest-neighbor torque coupling \cite{Dra0}, among others. A comprehensive account of these and other physical applications can be found in the review article by Barone \textit{et al.} \cite{BEMS}.

Our paper is concerned with the linearized (spectral) stability of the family of periodic traveling wave solutions to the sine-Gordon equation \eqref{eq:sinegordon}.  
%
%
%
%
%
To describe general traveling wave solutions,
one first goes into a  frame of reference moving with constant velocity $c$, which in turn amounts to rewriting \eqref{eq:sinegordon} in the new independent variables $z=x-ct$ and $\tau=t$.  Thus with $u(x,t)=v(z,\tau)$,
\begin{equation}\label{eq:travelingsinegordon}
(c^2-1)v_{zz} - 2 c v_{z \tau} + v_{\tau\tau} + \sin(v) = 0. 
\end{equation}
In what follows, we will always assume that $c \neq \pm 1$. 
A traveling wave solution of the sine-Gordon equation \eqref{eq:sinegordon} is by definition a stationary ($\tau$-independent) solution of \eqref{eq:travelingsinegordon}.  Making the ansatz $v(z,\tau)=f(z)$ implies that $f$ solves the nonlinear pendulum equation, that is, the ordinary differential equation
\begin{equation}\label{eq:pendulum}
(c^2-1)f''(z) + \sin(f(z))=0, \quad ':=\frac{d}{dz}.
\end{equation}
The pendulum equation \eqref{eq:pendulum} can be integrated once to obtain: 
\begin{equation}\label{eq:pendint}
\frac12(c^2-1)f'(z)^2+1-\cos(f(z)) = E
\end{equation}
where $E$ is a constant of integration (the total energy).   
Clearly, these equations are invariant under the shift $f(z)\mapsto f(z)+2\pi$. 
\begin{definition}[Wave speed dichotomy of traveling waves] Traveling wave solutions $f(z)$ with wave speeds satisfying $c^2<1$ (respectively $c^2>1$) are called \emph{subluminal} (respectively \emph{superluminal}).  
\end{definition} 
Representative phase portraits corresponding to subluminal and superluminal traveling waves are illustrated in Figure~\ref{fig:pendulum}.
\begin{figure}[h]
\subfigure[Subluminal waves: $c^2 < 1$]{\includegraphics[scale=.4, trim=2cm 0cm 0cm 0cm, clip=true]{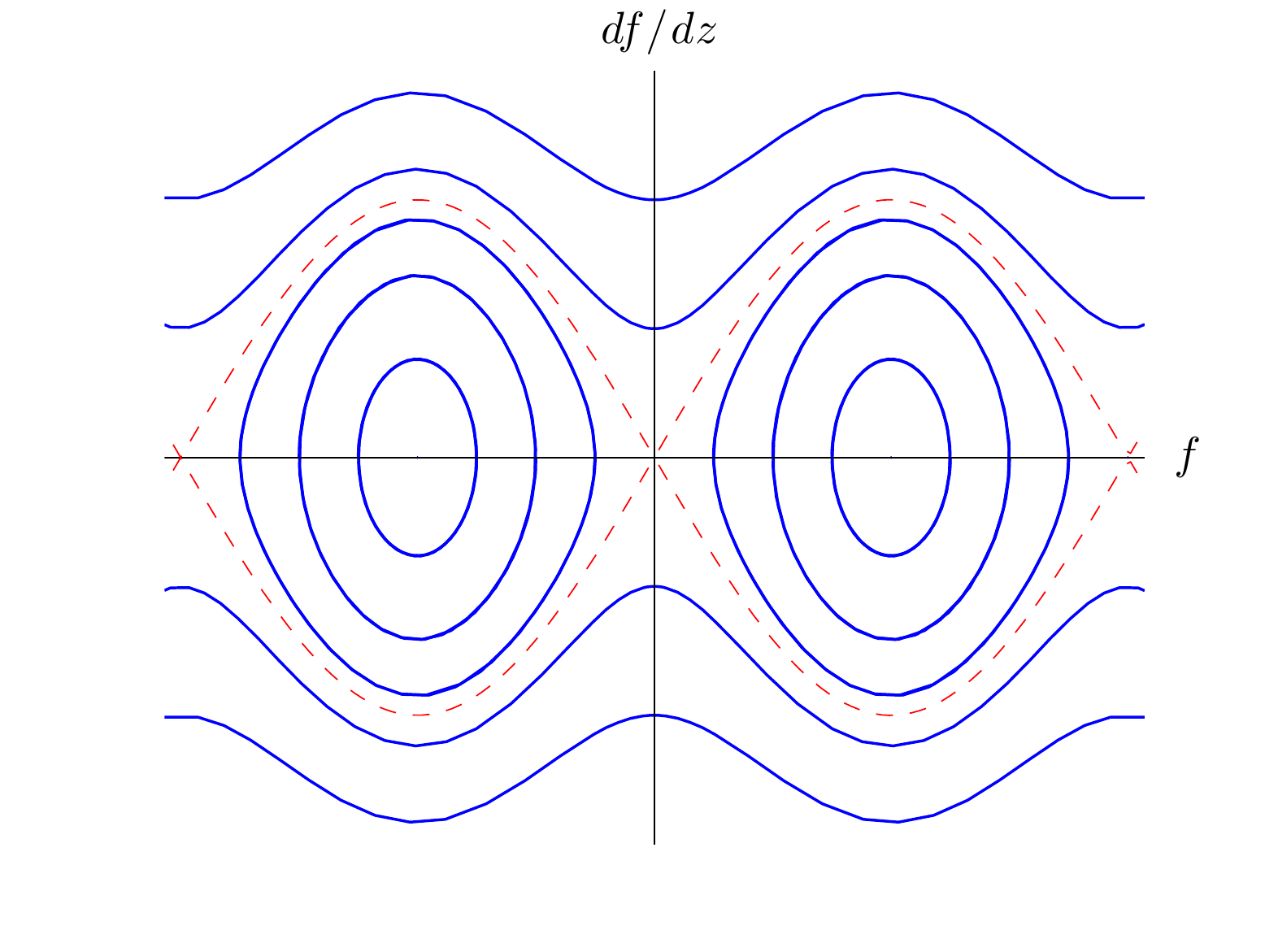}}
\subfigure[Superluminal waves: $c^2 > 1$]{\includegraphics[scale=.4, trim=2cm 0cm 0cm 0cm, clip=true]{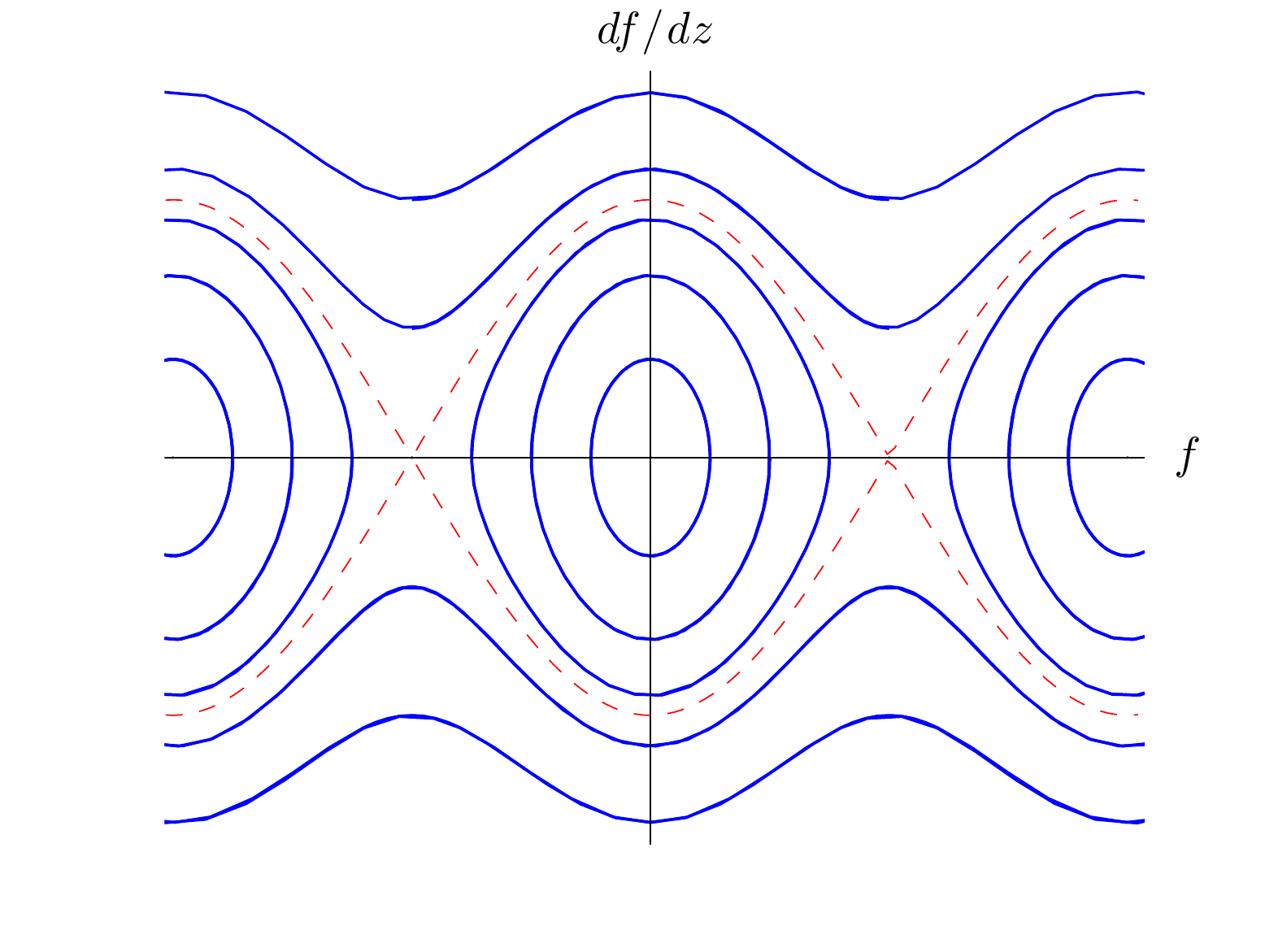}}
\caption{Phase portraits of the pendulum equation \eqref{eq:pendulum} showing both librational waves (closed orbits inside the separatrix) and rotational waves (orbits outside the separatrix). The separatrix is depicted by a dotted curve. }
\label{fig:pendulum}
\end{figure}
It is  easy to confirm  that except for solutions corresponding to the separatrices, all traveling wave solutions $f(z)$ are periodic modulo $2\pi$.  The second dichotomy for
such traveling waves is the following.
\begin{definition}[Energy dichotomy of periodic traveling waves] Solutions $f(z)$ to the pendulum equation \eqref{eq:pendulum} whose orbits in the phase plane lie outside the separatrix are called {\em rotational} waves. Solutions whose orbits in the phase plane are within the separatrix are called {\em librational} waves.
\end{definition}
It is easy to see that librational waves correspond to energies in the range $0<E<2$ and rotational waves correspond to energies with either $E<0$ (in the subluminal case) or $E>2$ (in the superluminal case).

%

\begin{remark}
In  classical mechanics  (cf.\ Goldstein \cite{Golds2ed}; see also \cite{BuM2}) the term \emph{libration} is borrowed from the astronomical literature describing periodic motions in which both the position and the momentum are periodic functions with same frequency. The term \emph{rotation} (sometimes also called  \emph{circulation} or \emph{revolution}) is used to characterize the kind of periodic motion in which the momentum is periodic but the position is no longer bounded. Increments by a period in the position, however, produce no essential change in the state of the system. 
In the sequel we will refer to both librational and rotational waves as ``periodic traveling waves,'' regardless of whether or not $f$ returns to the same value.
\end{remark}

To study the stability of the periodic traveling wave $f$, we make the substitution $v(z,\tau):=f(z)+w(z,\tau)$ in \eqref{eq:travelingsinegordon} and consider the implied behavior of the perturbation $w(z,\tau)$.  In the simplest approximation as the perturbation is initially small, we linearize about $w=0$ to obtain the following:
\begin{equation}\label{eq:linearpde}
(c^2-1)w_{zz} - 2 c w_{z\tau} + w_{\tau\tau} + \cos(f(z)) w = 0.
\end{equation} 
We seek separated solutions with exponential growth rate $\lambda\in\mathbb{C}$ of the form
\begin{equation}
w(z,\tau)=p(z)e^{\lambda\tau},
\end{equation}
which reduces \eqref{eq:linearpde} to a linear ordinary differential equation for $p$:
%
\begin{equation*}\label{eq:p}\tag{{\bf P}}
p'' - 2 c \gamma \lambda p' +\gamma\left[\lambda^2 + \cos(f(z))\right] p = 0, \quad ' := \frac{d}{dz}
\end{equation*}
where $\gamma$ is defined as:
\begin{equation}
\label{eq:defgamma}
\gamma := \frac{1}{c^2-1}.
\end{equation}


For all periodic traveling waves $f(z)$,  equation \eqref{eq:p} has periodic coefficients.  Let $T$, the \emph{fundamental period} of $f$, denote the smallest positive number for which $f(z+T)=f(z)\pmod{2\pi}$.

\begin{definition} \label{def:tempeigenP}We say that $\lambda\in\C$ is a {\em temporal eigenvalue} if there exists a solution to \eqref{eq:p} which is bounded for all $z \in \R$.  The set of all temporal eigenvalues is called the {\em spectrum} of equation \eqref{eq:p}, and is denoted $\sigma\eqref{eq:p}$.
\end{definition}

\begin{remark}\label{rem:eigen} We pause here for some brief remarks on the terminology used in Definition \ref{def:tempeigenP}, and to draw attention to some further remarks later on.

First we note that, as it is stated, Definition \ref{def:tempeigenP} in relation to equation \eqref{eq:p} is a nonstandard eigenvalue problem. Rather than a linear eigenvalue parameter, and an equation of the form $\cA v = \lambda v$, we have a quadratic operator pencil (see Remark \ref{rem:quadratic}). This leads us to take a more classical, applied mathematics approach to the problem. 

Secondly, it should be noted that our eigenfunctions are not in $L^2(\R)$ but rather $C_b(\R)$, the space of continuous and bounded functions on $\R$. Whether or not we can infer the full $L^2(\R)$ spectrum requires more work (see Remark \ref{rem:completestability}) and is key in moving from spectral stability analysis to full linear and nonlinear stability analysis. However, as the focus of the paper is the former, we fear going too far afield on this tangential matter. 

Finally, we remark that our eigenvalues as given in Definition \ref{def:tempeigenP} are not isolated. Given the periodicity of $\cos(f(z))$, we can approach this issue in the standard way; by following the construction of equation \eqref{eq:HillEV}, we see that the spectrum $\sigma\eqref{eq:p}$ decomposes into a one parameter family of (Bloch) eigenvalues each of which is isolated in the appropriate function space. Considering them as such was not necessary for any of the results in this paper, however such a construction is useful when 
\eqref{eq:sinegordon} is generalized to a nonlinear Klein-Gordon with arbitrary potential $V$ (see \cite{JMMP2}).
\end{remark}

\begin{definition} \label{def:stability}Let $f(z)$ be a periodic traveling wave solution of the sine-Gordon equation.   We say that $f(z)$ is {\em (temporally spectrally) stable} provided that there are no temporal eigenvalues in the right half plane. If there is a temporal eigenvalue in the right half plane, we say that $f(z)$ is {\em (temporally spectrally) unstable}.
\end{definition}

The spectrum $\sigmaP$ has the following four-fold symmetry.
\begin{proposition} \label{lem:symmetryp} 
$\sigmaP=\sigmaP^*=-\sigmaP=-\sigmaP^*$.
\end{proposition}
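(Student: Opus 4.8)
The plan is to establish each of the three non-trivial equalities $\sigma\eqref{eq:p}=\sigma\eqref{eq:p}^*$, $\sigma\eqref{eq:p}=-\sigma\eqref{eq:p}$, and $\sigma\eqref{eq:p}=-\sigma\eqref{eq:p}^*$ by exhibiting, for each, an explicit transformation that maps bounded solutions of \eqref{eq:p} at $\lambda$ to bounded solutions of \eqref{eq:p} at the corresponding reflected value of $\lambda$. Since $\cos(f(z))$ is real-valued, complex conjugation $p(z)\mapsto\overline{p(z)}$ carries a bounded solution of \eqref{eq:p} at eigenvalue $\lambda$ to a bounded solution of \eqref{eq:p} at eigenvalue $\overline{\lambda}=\lambda^*$; this proves $\sigma\eqref{eq:p}=\sigma\eqref{eq:p}^*$, and in particular shows the spectrum is symmetric about the real axis.

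For the reflection $\lambda\mapsto-\lambda$, the idea is to combine a reversal of the spatial variable with the fact that \eqref{eq:p} comes from a second-order-in-time PDE. Concretely, I would set $q(z):=p(-z)$ and compute: if $p$ solves \eqref{eq:p} at $\lambda$, then $q$ satisfies $q''+2c\gamma\lambda q'+\gamma[\lambda^2+\cos(f(-z))]q=0$. Because $f$ is a solution of the pendulum equation \eqref{eq:pendulum}, the reversed profile $f(-z)$ is again a solution (the equation \eqref{eq:pendulum} is reversible), and in fact $\cos(f(-z))$ has the same functional form as a translate of $\cos(f(z))$, so after a harmless shift of origin $q$ solves the equation obtained from \eqref{eq:p} by sending $\lambda\mapsto-\lambda$. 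Boundedness of $q$ on $\R$ is equivalent to boundedness of $p$ on $\R$, so $-\lambda\in\sigma\eqref{eq:p}$ whenever $\lambda\in\sigma\eqref{eq:p}$, giving $\sigma\eqref{eq:p}=-\sigma\eqref{eq:p}$. The third identity $\sigma\eqref{eq:p}=-\sigma\eqref{eq:p}^*$ then follows by composing the two symmetries already established.

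The main obstacle is making precise the claim that $\cos(f(-z))$ agrees with a translate of $\cos(f(z))$, i.e.\ that spatial reversal of the traveling wave is the same as a translation up to the symmetries of the pendulum equation. This requires using the structure of the phase portraits in Figure~\ref{fig:pendulum}: each periodic orbit (librational or rotational) of \eqref{eq:pendulum} is symmetric under $(f,f')\mapsto(f,-f')$ about its turning points, so choosing the origin $z=0$ at a turning point of $f$ makes $f$ even (in the librational case) or $f(-z)=-f(z)\pmod{2\pi}$ (in the rotational case, reflecting through $f=0$ or $f=\pi$), and in either case $\cos(f(\cdot))$ is even. One should note that a shift in the origin of $z$ changes \eqref{eq:p} only by conjugating solutions with $e^{c\gamma\lambda z_0}$, which does not affect boundedness, so no generality is lost; alternatively, one can absorb the translate directly and observe that translation-invariance of the notion of bounded solution on all of $\R$ makes the shift immaterial. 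Once this reversibility is set up cleanly, the remaining computations are routine.
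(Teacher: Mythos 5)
Your proof is correct and follows essentially the same route as the paper's: complex conjugation gives $\sigmaP=\sigmaP^*$, and spatial reflection combined with the evenness of $\cos(f(z))$ about a suitably chosen origin (justified by the reversibility of the pendulum equation) gives $\sigmaP=-\sigmaP$, with the fourth identity obtained by composition. One minor quibble: a shift of the origin in $z$ does not conjugate solutions of \eqref{eq:p} by $e^{c\gamma\lambda z_0}$—it merely translates the coefficient $\cos(f(z))$—but your alternative justification via translation-invariance of boundedness on $\R$ is exactly the right one and is what the paper implicitly uses.
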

\begin{proof} Suppose $\lambda$ is such that there exists a bounded solution $p(z)$ to equation 
\eqref{eq:p}. By taking the complex conjugate of equation \eqref{eq:p} we see that $p(z)^*$ 
solves equation \eqref{eq:p} when $\lambda = \lambda^*$. Moreover $p(z)^*$ is bounded if $p(z)$ is for all $z \in \R$. Thus $\lambda^*$ is in the  spectrum. Likewise for $- \lambda$, we observe that for all types of periodic traveling waves under consideration, there exists some $z_0 \in \R$, such that $\cos(f(z - z_0)) = \cos(f(z_0-z))$. Without loss of generality 
(by choice of origin for $z$ in the autonomous equation \eqref{eq:pendulum}),
we 
may assume that $z_0 = 0$, that is, $\cos(f(z))$ is an even function of $z$. Thus, if $\lambda\in \sigmaP$, substitution of the bounded function $p(-z)$ into equation \eqref{eq:p} shows that $-\lambda$ is in the  spectrum as well. Combining these last two results completes the proof. 
\end{proof}
\begin{remark}
The significance of Proposition~\ref{lem:symmetryp} is that the spectral problem \eqref{eq:p} has
so-called \emph{full Hamiltonian symmetry}.  Of course this means that spectral instability corresponds to the existence of a temporal eigenvalue with nonzero (positive or negative) real part.  Proposition~\ref{lem:symmetryp} also holds for the spectrum corresponding to periodic traveling waves in more general Klein-Gordon type equations with potential $V$, and while the argument that
$\sigmaP^*=\sigmaP$ goes through unchanged, the proof of the fact that $\sigmaP=-\sigmaP$
is different in general because for rotational waves $V''(f(z))$ (which plays the role of $\cos(f(z))$ in the Klein-Gordon case) is not generally even about any $z_0$.  See \cite{JMMP2} for more details.
\end{remark}
\begin{remark} \label{rem:quadratic}
Under the substitution $\lambda=i\zeta$, equation \eqref{eq:p} can be written in terms of
a (formally) \emph{selfadjoint quadratic operator pencil} $\mathcal{L}(\zeta)$ given by 
$\mathcal{L}(\zeta):=\zeta^2\mathcal{L}_2 + \zeta\mathcal{L}_1 + \mathcal{L}_0$, where
\begin{equation}
\mathcal{L}_2:=-\gamma,\quad\mathcal{L}_1:= -2c\gamma\cdot i\frac{d}{dz},\quad
\mathcal{L}_0:=\frac{d^2}{dz^2}+\gamma\cos(f(z)).
\end{equation}
Indeed, it is easy to see that \eqref{eq:p} can be written simply as $\mathcal{L}(\zeta)p=0$.  In general, a spectral problem for a polynomial operator pencil can be 
reformulated as a genuine eigenvalue problem for an operator acting on an appropriate Cartesian product of the  base space.  Upon setting $p_0=p$ and $p_1=\lambda p$, 
we can easily rewrite \eqref{eq:p} in the form
\begin{equation}
 L_{f}
\begin{pmatrix} p_0 \\ p_1 \end{pmatrix}=\lambda\begin{pmatrix} p_0 \\ p_1 \end{pmatrix},\quad  L_{f}:=
\begin{pmatrix} 0 & 1 \\ -(c^2-1) \partial_{zz} - \cos (f(z)) & 2 c \partial_{z} \end{pmatrix}.
\label{eq:w-system}
\end{equation}
The matrix operator $L_f$ is frequently called the \emph{companion matrix} to the pencil $\mathcal{L}$.  In principle, these observations make available the theory of Krein signatures associated to the purely imaginary temporal eigenvalues, which can in turn be used to locate non-imaginary points of $\sigmaP$ corresponding to instability.  Indeed, an alternate proof of Lemma~\ref{lem:sub_rot} below can be given in terms of Krein signature theory.  See \cite{KollarM12} and the references therein for further information. 
\end{remark}
We are now ready to formulate our main result. 
We emphasize that stability is meant in the sense of Definition \ref{def:stability}.
\begin{theorem} \label{th:main}Subluminal rotational waves are stable. Superluminal rotational waves are unstable. Both sub- and superluminal librational waves are unstable. 
\end{theorem}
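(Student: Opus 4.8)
The plan is to analyze the spectrum $\sigma\eqref{eq:p}$ by studying the monodromy (transfer) matrix of the linear ODE \eqref{eq:p} over one period $T$, exploiting the Hamiltonian structure recorded in Remark~\ref{rem:quadratic} together with the four-fold symmetry of Proposition~\ref{lem:symmetryp}. First I would put \eqref{eq:p} into the first-order system $\mathbf{p}' = A(z;\lambda)\mathbf{p}$ with $\mathbf{p} = (p,p')^\top$, where $A$ is $T$-periodic in $z$; let $M(\lambda)$ denote the monodromy matrix (the period map). A bounded solution exists (i.e. $\lambda \in \sigma\eqref{eq:p}$) precisely when $M(\lambda)$ has an eigenvalue (Floquet multiplier) $\mu$ on the unit circle $|\mu|=1$. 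Because the $p'$-coefficient $-2c\gamma\lambda$ is constant in $z$, the substitution $p(z) = e^{c\gamma\lambda z} q(z)$ removes the first-order term and turns \eqref{eq:p} into a Hill-type equation $q'' + \gamma[\lambda^2(1 - c^2\gamma)\cdot(\text{const}) + \cos f(z)]\,q = \text{(clean form)}\, q$; I would carry out this reduction carefully so that $\det M(\lambda) = e^{2c\gamma\lambda T}$ and the stability question reduces to tracking the Hill discriminant $\Delta(\lambda) := e^{-c\gamma\lambda T}\,\mathrm{tr}\,M(\lambda)$, with $\lambda \in \sigma\eqref{eq:p}$ iff $\Delta(\lambda) \in [-2,2]$ is real. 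The key structural point is that after this transformation the reduced potential depends on $\lambda$ only through $\lambda^2$, so $\Delta$ is an entire function of $\lambda^2$; by Proposition~\ref{lem:symmetryp} it suffices to decide whether $\sigma\eqref{eq:p}$ meets the positive real axis.

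For the \textbf{subluminal rotational} case ($\gamma<0$, $E<0$), the reduced Hill potential $\Phi(z;\lambda^2) := \gamma\lambda^2 c^2\gamma + \gamma\cos f(z) + \gamma(c\gamma)^2$ (up to the exact constants from the reduction) has the feature that, because $\gamma<0$ and $\cos f(z)$ stays within a range governed by $E<0$, one can show $\Delta(\lambda)$ is real and monotone along the imaginary $\lambda$-axis in a way that forces $\sigma\eqref{eq:p}$ to lie entirely on $i\mathbb{R}$; concretely I would show that for $\lambda = i s$ real, the reduced potential is that of a stable Hill equation (classical Haupt/oscillation theory: the endpoints of instability intervals interlace and the discriminant oscillates between $\pm 2$) and that no band edge can leave the imaginary axis. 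The cleanest route is a perturbation/continuation argument: the spectrum at $\lambda^2$ large negative is clearly all-imaginary, and as $\lambda^2$ increases toward $0$ no band can detach — detachment would require a real multiplier pair coalescing, which the sign of $\gamma$ and the bound on $\cos f$ from $E<0$ preclude.

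For the three \textbf{unstable} cases (superluminal rotational, sub- and superluminal librational) the strategy is to exhibit an explicit positive real $\lambda$ in $\sigma\eqref{eq:p}$, or more robustly to show $\Delta(0) > 2$ or $\Delta$ leaves $[-2,2]$ at $\lambda=0^+$. Here the natural tool is the translational eigenfunction: differentiating the profile equation \eqref{eq:pendulum} gives that $f'(z)$ solves \eqref{eq:p} at $\lambda=0$, and $f'$ is either periodic (librational) or antiperiodic/periodic-mod-translation (rotational), which pins down $\Delta(0)$ and, crucially, the sign of $\Delta'$ or $\Delta''$ at $\lambda^2=0$. A Sturm oscillation count on the $\lambda=0$ operator $\mathcal{L}_0 = \partial_{zz} + \gamma\cos f(z)$ — counting zeros of $f'$ on a period and comparing with the ground state — determines whether increasing $\lambda^2>0$ pushes the discriminant out of $[-2,2]$ (instability) or keeps it in (stability); the sign of $\gamma$ flips between sub- and superluminal and the (anti)periodicity of $f'$ flips between librational and rotational, which is exactly what separates the one stable case from the three unstable ones. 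I would make this rigorous by the standard formula $\partial_{\lambda^2}\Delta$ in terms of an integral of products of the two Floquet solutions over a period (a Wronskian/variation-of-parameters identity), whose sign is controlled by the above oscillation data.

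The main obstacle I anticipate is the subluminal rotational (stable) case: proving \emph{absence} of real spectrum is harder than exhibiting it, because one must rule out \emph{all} possible band detachments for $\lambda^2 \in (-\infty, 0]$, not just analyze a neighborhood of $\lambda = 0$. This requires a genuinely global control of the Hill discriminant — either via a monotonicity-in-$\lambda^2$ argument for the transfer matrix trace that uses the definite sign of $\gamma\cos f(z)$ available only when $E<0$, or via the Krein-signature reformulation alluded to in Remark~\ref{rem:quadratic}, showing all imaginary temporal eigenvalues have definite Krein sign so that no collision producing instability can occur. The other three cases should be comparatively routine once the $\Delta(0)$ computation and the local sign of $\partial_{\lambda^2}\Delta$ at the origin are in hand, since a single excursion of $\Delta$ outside $[-2,2]$ already produces a real temporal eigenvalue and hence exponential instability.
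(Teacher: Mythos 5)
The central step of your reduction is not valid, and it is precisely the error in Scott's argument that this paper is written to correct. After the substitution $p(z)=e^{c\gamma\lambda z}q(z)$ you characterize $\sigma\eqref{eq:p}$ by the condition that the Hill discriminant $\Delta(\lambda)=e^{-c\gamma\lambda T}\tr\M_p(\lambda)$ be real and lie in $[-2,2]$. That condition characterizes $\sigma\eqref{eq:q}$ (boundedness of $q$), not $\sigma\eqref{eq:p}$ (boundedness of $p=e^{c\gamma\lambda z}q$). The two agree only on the imaginary axis: if $\Delta\in[-2,2]$ then both multipliers $\eta_\pm$ of \eqref{eq:q} are unimodular, hence $|\rho_\pm|=|e^{c\gamma\lambda T}|$, which equals $1$ only when $\Re(c\gamma\lambda)=0$; conversely, $\lambda\in\sigma\eqref{eq:p}$ with $\Re\lambda\neq 0$ forces exactly one unimodular $\rho$ and $\Delta\notin[-2,2]$ (Lemmas~\ref{lem:pandqimag} and~\ref{lem:pabel}). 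Concretely, your method fails on superluminal rotational waves: there the Hill spectrum satisfies $\mu\le 0$, so your discriminant never returns to $[-2,2]$ for $\lambda^2>0$ and you would wrongly conclude stability, whereas the actual unstable eigenvalues are neither real nor imaginary. Your oscillation count and the variation-of-parameters formula for $\partial_{\lambda^2}\Delta$ at $\lambda=0$ are the right tools for locating $\mu=0$ among the periodic Hill eigenvalues (this is essentially Lemma~\ref{lem:mu1}), but they feed into the wrong criterion.

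What the paper does instead: for the three unstable cases it works with the continuous function $G_p(\lambda)=\log|\rho_+(\lambda)|\log|\rho_-(\lambda)|$, which vanishes exactly on $\sigma\eqref{eq:p}$ and satisfies $G_p=(\Re(c\gamma\lambda T))^2+G_q$ with $G_q\le 0$; one exhibits a point $i\beta_*$ in a Hill gap where $G_p<0$ and a point in the open right half-plane where $G_p>0$ (a real point of $\sigma\eqref{eq:q}$ for librational waves, or a point with $\Re\lambda\gg 1$ for superluminal waves), and the Intermediate Value Theorem then produces an eigenvalue of \eqref{eq:p} with positive real part, generally off both axes. For subluminal rotational stability --- which you correctly flag as the hard case but for which you offer only a heuristic ``no band detaches'' continuation --- the paper pairs the eigenvalue equation with $p^*$ over a period under Bloch boundary conditions to get a quadratic in $\lambda$, yielding
$\lambda=\|p\|^{-2}\bigl[im\pm\sqrt{-m^2-(c^2-1)\|p\|^2\langle p,Lp\rangle}\bigr]$
with $m$ real; since $c^2<1$ and $L$ is negative semidefinite for rotational waves, the radicand is nonpositive and every temporal eigenvalue is purely imaginary. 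You would need to replace both halves of your argument along these (or equivalent Krein-signature) lines.
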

In Figure~\ref{fig:spec} we show numerically computed spectra for each of the four types of periodic traveling waves.
The structure of $\sigmaP$, including details of the behavior near the origin in the $\lambda$-plane will be described (also for a more general class of nonlinear Klein-Gordon type equations)
in a subsequent work \cite{JMMP2}.

\begin{figure}[h]
\begin{center}
\includegraphics{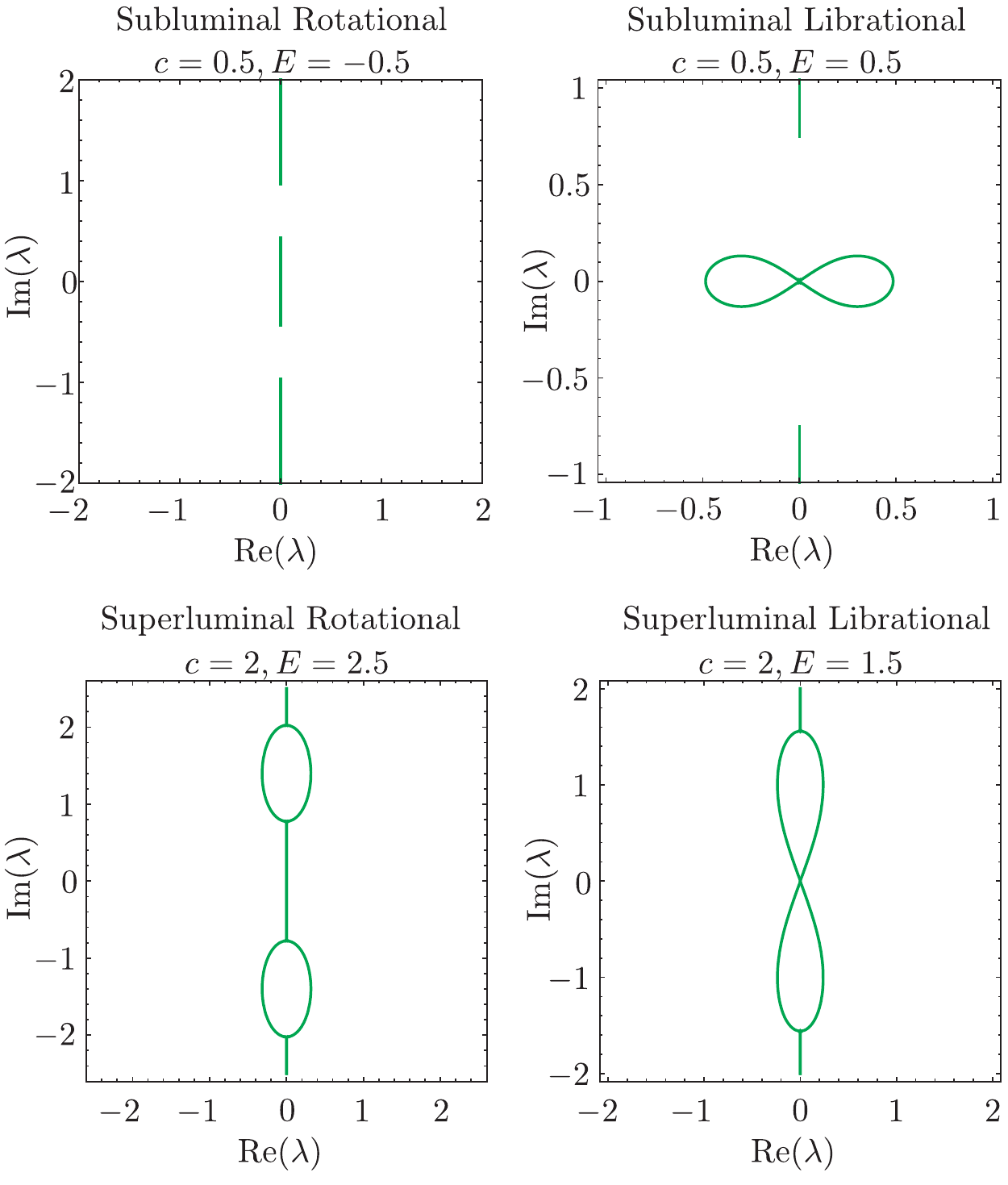}
\end{center}
\caption{Numerical plots of the spectrum $\sigma\eqref{eq:p}$ in the complex $\lambda$-plane representative of  the four types of periodic traveling waves.  Top row:  subluminal waves.  Bottom row:  superluminal waves.  Left column:  rotational waves.  Right column:  librational waves.  To compute the part of $\sigmaP$ on the imaginary axis it is useful to use Corollary~\ref{cor:pandqimag} below to reduce the problem to the calculation of the trace of the monodromy matrix for Hill's equation \eqref{eq:q}.  To compute the non-imaginary part of $\sigmaP$ one can look for the zero level curve of $G_p(\lambda)$ to be defined below (see \eqref{eq:GpGqdef}).
  }\label{fig:spec}
\end{figure}

\begin{remark}\label{rem:completestability}
The complete stability analysis of periodic traveling wave solutions of the sine-Gordon equation \eqref{eq:sinegordon} requires two additional nontrivial steps.  First, one requires some analogue of the spectral theorem guaranteeing \emph{completeness} of the set of nonzero solutions $p(z)$ of equation \eqref{eq:p} corresponding to $\lambda\in\sigmaP$.  This would allow the general solution $w(z,\tau)$ of the Cauchy initial-value problem for the linearized sine-Gordon equation \eqref{eq:linearpde} to be expressed uniquely as an integral over $\sigmaP$ of factorized solutions $w(z,\tau)=p(z)e^{\lambda\tau}$.  Second, one must prove that $w(z,\tau)$ is a good approximation to the solution $v(z,\tau)$ of \eqref{eq:travelingsinegordon} with the same initial data, i.e.\ one must prove \emph{nonlinear stability}.  However, the notion of spectral stability is the essential starting point for both of these next steps.

It should also be said that the kind of stability problem we have in mind from the start is the analysis of the effect of perturbations to initial data at $\tau=t=0$ that are small in an appropriate function space.  A quite different approach would be to exploit the invariance of the sine-Gordon equation \eqref{eq:sinegordon} under the Poincar\'e group of Lorentz boosts to make the traveling wave stationary.  This latter approach has the advantage of avoiding altogether the first-order derivatives in equation \eqref{eq:p} arising from the use of a Galilean boost (under which \eqref{eq:sinegordon} is \emph{not} invariant) to make the wave stationary; however this method only applies for subluminal traveling waves, and moreover physically one is then perturbing the traveling wave on a space-like hypersurface different from the line $t=0$.
\end{remark}

To our knowledge, Scott \cite{Sco1} was the first to
consider the stability of periodic traveling wave solutions to the sine-Gordon equation.
His final conclusion of which cases are stable, as well as which are not, turns out to have been correct; however, it was based on a claim that we show here was not correct. Scott claimed that the spectrum was, in every case, contained in the union of the real or imaginary axes. This is unfortunately not correct, as we will show, and it is the fact the spectrum cannot be so easily constrained that makes this a difficult problem. 
Scott made the observation that equation \eqref{eq:p} can be converted by a simple exponential substitution
\begin{equation}\label{eq:defineq}
q(z) =p(z) e^{- c \gamma  \lambda z},
\end{equation}
into \emph{Hill's equation} with spectral parameter $\mu$: 
\begin{equation*}\label{eq:q}\tag{{\bf Q}}
q'' + \gamma \cos(f(z))q =\mu q, \quad \mu := \gamma^2 \lambda^2 = \left(\frac{\lambda}{c^2-1}\right)^2.
\end{equation*}
This method is advantageous in that Hill's equation has been  well-studied, and there is a large body of related work dating to the late 19th century (see, for example, \cite{MW66} and the references therein).  Using this connection and information about the spectrum of Hill's equation, Scott attempted to deduce the location of $\sigmaP$ in the complex plane.
Unfortunately, the argument in \cite{Sco1} assumes that the transformation \eqref{eq:defineq}
is isospectral, i.e.\ that the exponential factor $e^{-c\gamma\lambda z}$ is harmless.  As the spectral problem for equation \eqref{eq:q} was posed on $C_b(\R)$, 
this is not generally true unless $\lambda$ is purely imaginary (as we will prove below; see Corollary~\ref{cor:pandqimag} and Lemma~\ref{lem:pandqimag}).  This led to an erroneous calculation of $\sigmaP$ in \cite{Sco1}. 
The \emph{statement} of Theorem~\ref{th:main} is in fact rather well-known in the nonlinear waves community (see, for example the influential textbook by Whitham \cite{Wh} which cites \cite{Sco1}), however 
to our knowledge there is to date no correct proof in the literature.  Our goal in  this paper is to give a completely rigorous proof of Theorem~\ref{th:main}, properly accounting for the effect of the exponential factor in the transformation \eqref{eq:defineq}.

The rest of our paper is organized as follows.  In \S\ref{sec:eigenvalue} we define the basic notions of spectral analysis for the problems \eqref{eq:p} and \eqref{eq:q}.  Then in \S\ref{sec:fundamental_properties} we present some fundamental relations between the spectra of problems \eqref{eq:p} and \eqref{eq:q} and we also recall the most important properties of
Floquet theory for Hill's equation.   At this point we will have all of the tools in place to give the proof of Theorem~\ref{th:main}, all details of which are presented in \S\ref{sec:stability_instability}.
Finally, in \S\ref{sec:structure} we give some auxiliary results describing aspects of the structure
of the spectrum of equation \eqref{eq:p} in the complex $\lambda$-plane.  In particular these results show that the spectrum of \eqref{eq:p} is not necessarily confined to the real and imaginary axes as was suggested in the paper \cite{Sco1}. For the reader's convenience, in an appendix we gather some results about how problem \eqref{eq:q} can be reduced to Lam\'e's equation and implications for the corresponding Floquet spectrum.

\section{The Eigenvalue Problem} 
\label{sec:eigenvalue}
%
%
\subsection{Floquet theory for equations \eqref{eq:p} and \eqref{eq:q}}
Consider a general first-order $2\times 2$ system with periodic coefficients, of the form
\begin{equation}
\F'(z;\lambda)=\A(z;\lambda)\F(z;\lambda),\quad \A(z+T;\lambda)=\A(z;\lambda).
\label{eq:periodic_general}
\end{equation}
Here, the matrix $\A(z;\lambda)$ is entire in $\lambda$ for each $z\in\mathbb{R}$.
The Floquet theory of this equation is based on the analysis of the fundamental solution matrix $\F(z;\lambda)$ satisfying \eqref{eq:periodic_general} and the initial condition $\F(0;\lambda):=\Id$.  The relevant quantities are defined as follows.
\begin{definition}
The matrix $\M(\lambda):=\F(T;\lambda)$ is called the \emph{monodromy matrix} for equation \eqref{eq:periodic_general}.  We denote the trace and determinant of the monodromy matrix by
$\Delta(\lambda):=\M_{11}(\lambda)+\M_{22}(\lambda)$ and $D(\lambda):=\M_{11}(\lambda)\M_{22}(\lambda)-\M_{12}(\lambda)\M_{21}(\lambda)$ respectively.  The \emph{discriminant} is the quantity $\Delta(\lambda)^2-4D(\lambda)$.  \label{def:Monodromy}
\end{definition}
\noindent Frequently, Abel's identity can be used to calculate a closed form for$D(\lambda)$: 
\begin{equation}\label{eq:Abel}
D(\lambda) = \exp\left(\int_0^T \tr(\A(z;\lambda))\, dz\right).
\end{equation}

Equation \eqref{eq:p} can be written in the form of equation \eqref{eq:periodic_general}:
\begin{equation}\label{eq:psystem}
\begin{pmatrix} p \\ p' \end{pmatrix}' =\A_p(z; \lambda) \begin{pmatrix} p \\p' \end{pmatrix},\quad
\A_p(z;\lambda):= \begin{pmatrix} 0 & 1 \\ -\gamma (\lambda^2 + \cos(f(z))) & 2 c \gamma \lambda\end{pmatrix}.
\end{equation}
Note that since $f(z)$ is periodic modulo $2\pi$, $\cos(f(z))$ is also periodic.  We denote the  quantities defined in Definition~\ref{def:Monodromy} corresponding to equation \eqref{eq:p} or equivalently \eqref{eq:psystem} using the subscript ``$p$''.  
\begin{definition}
The \emph{Floquet multipliers} $\rho=\rho(\lambda)$ of equation \eqref{eq:p} are the (spatial) eigenvalues of the monodromy matrix $\M_p(\lambda)$, i.e., they solve the quadratic equation $\rho^2-\Delta_p(\lambda)\rho +D_p(\lambda)=0$.
\label{def:Floquet_P}
\end{definition}

\noindent The next proposition is a standard result from Floquet theory (e.g.\ see \cite{Chi99}): 
\begin{proposition}
The complex number $\lambda$ is a temporal eigenvalue, i.e.\ belongs to the spectrum $\sigma\eqref{eq:p}$, if and only if at least one of the Floquet multipliers $\rho$ has modulus $1$:  $|\rho|=1$.  
\label{prop:FloquetSpectrumP}
\end{proposition}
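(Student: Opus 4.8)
The plan is to invoke the classical Floquet-theoretic characterization of bounded solutions in terms of the monodromy matrix $\M_p(\lambda)$. The starting point is the period-shift identity for the fundamental matrix: since $\A_p(\cdot;\lambda)$ is $T$-periodic, $z\mapsto\F_p(z+T;\lambda)$ solves \eqref{eq:psystem} with value $\M_p(\lambda)$ at $z=0$, whence $\F_p(z+T;\lambda)=\F_p(z;\lambda)\M_p(\lambda)$ for all $z\in\R$, and iterating, $\F_p(z+nT;\lambda)=\F_p(z;\lambda)\M_p(\lambda)^n$ for all $n\in\Z$ (note $\M_p(\lambda)$ is invertible, as $\det\M_p(\lambda)=D_p(\lambda)\neq 0$ by Abel's identity \eqref{eq:Abel}). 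Consequently the solution $\mathbf{p}(z):=\F_p(z;\lambda)\mathbf{p}_0$ of \eqref{eq:psystem} satisfies $\mathbf{p}(z+nT)=\F_p(z;\lambda)\M_p(\lambda)^n\mathbf{p}_0$. Since $\F_p(\cdot;\lambda)$ is continuous on the compact interval $[0,T]$ with continuous, bounded inverse, I conclude that a nontrivial $\mathbf{p}$ is bounded on all of $\R$ if and only if the two-sided orbit $\{\M_p(\lambda)^n\mathbf{p}_0 : n\in\Z\}\subset\C^2$ is bounded.

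It then remains to decide for which $\mathbf{p}_0\neq 0$ this orbit is bounded, which I would do by reducing to the Jordan normal form of $\M_p(\lambda)$. If $\M_p(\lambda)$ has distinct Floquet multipliers $\rho_1\neq\rho_2$ with eigenvectors $\mathbf{v}_1,\mathbf{v}_2$, then for $\mathbf{p}_0=a_1\mathbf{v}_1+a_2\mathbf{v}_2$ one has $\M_p(\lambda)^n\mathbf{p}_0=a_1\rho_1^n\mathbf{v}_1+a_2\rho_2^n\mathbf{v}_2$; as $(\xi_1,\xi_2)\mapsto\|\xi_1\mathbf{v}_1+\xi_2\mathbf{v}_2\|$ is a norm on $\C^2$, this orbit is bounded exactly when $|\rho_j|=1$ for every $j$ with $a_j\neq 0$, so a nontrivial bounded solution exists iff at least one multiplier lies on the unit circle. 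If instead there is a repeated multiplier $\rho$ (with $\rho^2=D_p(\lambda)$), write $\M_p(\lambda)=\rho(\Id+N)$ with $N^2=0$; then $\M_p(\lambda)^n\mathbf{p}_0=\rho^n(\mathbf{p}_0+nN\mathbf{p}_0)$, of norm $|\rho|^n\|\mathbf{p}_0+nN\mathbf{p}_0\|$, which is bounded over $\Z$ precisely when $|\rho|=1$ and $N\mathbf{p}_0=0$; since one can always choose a nonzero $\mathbf{p}_0$ with $N\mathbf{p}_0=0$, again a nontrivial bounded solution exists iff $|\rho|=1$. Combining the two cases yields Proposition~\ref{prop:FloquetSpectrumP}.

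The only delicate point is the non-semisimple case: when $|\rho|=1$ but $\M_p(\lambda)$ is a genuine Jordan block, only the one-dimensional eigenspace yields a bounded solution, the complementary solution being of the form $e^{ikz}$ times a polynomial of degree one in $z$ with $T$-periodic coefficients, hence linearly growing; nevertheless a single bounded solution is all that Definition~\ref{def:tempeigenP} demands, so the equivalence is unaffected. I do not expect any substantive obstacle here --- this is the textbook description of the Floquet spectrum --- and the argument can equivalently be organised around the Bloch ansatz $\F_p(z;\lambda)\mathbf{v}=e^{ikz}\mathbf{P}(z)$ with $\rho=e^{ikT}$ and $\mathbf{P}$ $T$-periodic, using that $|\rho|=e^{-T\Im k}$ so that $|\rho|=1\iff\Im k=0\iff|e^{ikz}|\equiv 1$.
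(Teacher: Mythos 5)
Your proof is correct and complete: the period-shift identity $\F_p(z+T;\lambda)=\F_p(z;\lambda)\M_p(\lambda)$, the reduction of boundedness of a solution to boundedness of the two-sided orbit $\{\M_p(\lambda)^n\mathbf{p}_0\}_{n\in\Z}$, and the case analysis on the Jordan form of $\M_p(\lambda)$ together constitute exactly the standard Floquet-theoretic argument. The paper itself offers no proof of Proposition~\ref{prop:FloquetSpectrumP}, labelling it a standard result and citing a textbook, so your argument simply supplies the omitted classical proof and matches the intended route.
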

\noindent For this reason we sometimes refer to $\sigmaP$ as the \emph{Floquet spectrum} (of \eqref{eq:p}).


The relation between equations \eqref{eq:p} and \eqref{eq:q} given by \eqref{eq:defineq} allows us to reinterpret the spectrum $\sigmaP$ as originally defined in Definition \ref{def:tempeigenP}:
\begin{proposition}
The Floquet spectrum $\sigmaP$ consists of exactly those $\lambda\in\C$ such that there is a solution $q(z)$ to \eqref{eq:q} for which  $q(z)e^{c \gamma \lambda z}$ is bounded for all $z\in\R$.
\end{proposition}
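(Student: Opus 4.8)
The plan is to use the substitution \eqref{eq:defineq} directly as a bijection between the \emph{solution spaces} of \eqref{eq:p} and \eqref{eq:q}, not between their spectra. First I would record the elementary but essential fact that for each \emph{fixed} $\lambda\in\C$ the map $p(z)\mapsto q(z):=p(z)e^{-c\gamma\lambda z}$ sends solutions of \eqref{eq:p} bijectively onto solutions of \eqref{eq:q} with $\mu=\gamma^2\lambda^2$: the forward direction is Scott's computation already quoted above, and the inverse $q(z)\mapsto p(z):=q(z)e^{c\gamma\lambda z}$ is well defined because $e^{c\gamma\lambda z}$ is entire and nowhere vanishing, so it carries solutions of \eqref{eq:q} back to solutions of \eqref{eq:p}. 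In particular these two maps are mutually inverse linear isomorphisms of the respective two-dimensional solution spaces.

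Second, I would translate the boundedness condition across this isomorphism. By Definition~\ref{def:tempeigenP}, $\lambda\in\sigma\eqref{eq:p}$ if and only if \eqref{eq:p} possesses a solution $p(z)$ bounded on all of $\R$. Such a $p$ corresponds to the solution $q(z)=p(z)e^{-c\gamma\lambda z}$ of \eqref{eq:q}, and conversely; moreover the function $q(z)e^{c\gamma\lambda z}$ is exactly $p(z)$. Hence $p$ is bounded on $\R$ if and only if $q(z)e^{c\gamma\lambda z}$ is bounded on $\R$. Chaining the two equivalences yields the claimed description of $\sigma\eqref{eq:p}$.

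The only point requiring any care — and the reason this proposition is worth stating at all — is that the exponential weight $e^{c\gamma\lambda z}$ must be kept: it is benign only when it is bounded together with its reciprocal, i.e.\ only when $\Re(c\gamma\lambda)=0$, and this is precisely the gap in \cite{Sco1}, where $\sigma\eqref{eq:p}$ is implicitly identified with the set of $\lambda$ for which $\gamma^2\lambda^2$ lies in the Floquet spectrum of Hill's equation \eqref{eq:q}. I therefore expect no genuine obstacle in the argument itself; the content lies entirely in formulating the correspondence correctly and resisting the temptation to discard the weight. The quantitative question of exactly when the weight is harmless would then be taken up separately (cf.\ Corollary~\ref{cor:pandqimag} and Lemma~\ref{lem:pandqimag}).
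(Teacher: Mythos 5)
Your argument is correct and is exactly the reasoning the paper intends: the proposition is stated there without proof as an immediate consequence of the substitution \eqref{eq:defineq}, and your explicit observation that $p\mapsto p\,e^{-c\gamma\lambda z}$ is a linear isomorphism of the two solution spaces (with inverse multiplication by the nowhere-vanishing factor $e^{c\gamma\lambda z}$), so that $q(z)e^{c\gamma\lambda z}$ is precisely $p(z)$, is the omitted justification. Nothing is missing.
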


Just as for \eqref{eq:p} we can write \eqref{eq:q} as a system with periodic coefficients:\begin{equation}\label{eq:qsystem}
\begin{pmatrix} q \\ q' \end{pmatrix}' =   \A_q(z;\lambda)\begin{pmatrix} q \\ q' \end{pmatrix} \quad \A_q(z;\lambda) := 
\begin{pmatrix} 0 & 1 \\  \mu -\gamma \cos(f(z)) & 0 \end{pmatrix}.
\end{equation}
We denote the  quantities defined in Definition~\ref{def:Monodromy} corresponding to equation \eqref{eq:q} or equivalently \eqref{eq:qsystem} using the subscript ``$q$''.  
\begin{definition}
The \emph{Floquet multipliers} $\eta=\eta(\lambda)$ of equation \eqref{eq:q} are the (spatial) eigenvalues of the monodromy matrix $\M_q(\lambda)$, i.e., they solve the quadratic equation $\eta^2-\Delta_q(\lambda)\eta +D_q(\lambda)=0$.
\label{def:Floquet_Q}
\end{definition}
The definition parallel to Definition~\ref{def:tempeigenP} but tailored to  \eqref{eq:q} is the following.
\begin{definition} The \emph{(Floquet) spectrum} $\sigma\eqref{eq:q}$ of equation \eqref{eq:q} is the set of $\lambda \in \C$ such that there exists a bounded (on $\R$) solution $q(z)$ to equation \eqref{eq:q} with $\mu = (\gamma \lambda)^2$. Equivalently, $\lambda \in \sigma\eqref{eq:q}$ if and only if at least one of the Floquet multipliers $\eta$ has modulus $1$: $|\eta| = 1$. 
\label{def:tempeigenQ}
\end{definition}

\noindent From equations \eqref{eq:psystem} and \eqref{eq:qsystem} and formula \eqref{eq:Abel}  we have for all $\lambda \in \C$ 
\begin{equation}\label{eq:determinants}
D_p(\lambda) = e^{2c\gamma\lambda T} \quad \text{and} \quad D_q(\lambda) = 1.
\end{equation} 

\begin{remark}
We stress here that in general $\sigmaP$ is {\em not} the same as $\sigmaQ$.  It is an elementary fact from Hill's equation theory \cite{MW66} that the spectral problem \eqref{eq:q} is self-adjoint, and hence $\sigmaQ$ is confined to the real and imaginary axes, as $\mu$ must be real in order for there to exist a bounded solution to equation \eqref{eq:q}. However, it is evident from Figure \ref{fig:spec} that $\sigmaP$ can have points that are neither purely real nor purely imaginary. Moreover, $\lambda \in \sigmaP$ corresponds to the existence of a solution to \eqref{eq:q} exhibiting a certain growth rate for $z \in \R$, and it is important to keep in mind 
that in determining stability of traveling wave solutions to the sine-Gordon equation, it is not the Floquet 
spectrum of \eqref{eq:q} per se that will explicitly indicate stability, as the latter corresponds to solutions of \eqref{eq:q} bounded for $z \in \R$. 

On the other hand, the Floquet spectra of \eqref{eq:p} and \eqref{eq:q} \emph{are} related as we will show in \S \ref{subsec:relate}.
\end{remark}

\begin{remark}
It is not in general true that the fundamental period $T$ of $f$ will coincide with the smallest period of equation \eqref{eq:psystem}. For rotational waves they do in fact coincide, but for librational waves it is straightforward to see that $T$ is actually {\em twice} the minimal period of $\cos (f(z))$. 
In computing the monodromy matrix $\M(\lambda)$ for equations \eqref{eq:p} or \eqref{eq:q}, we
will \emph{always} use the fundamental period $T$ of $f$, regardless of whether or not it is also the minimal period of \eqref{eq:p} or \eqref{eq:q}.
\end{remark}

\begin{remark}
Note that the function $\Delta_q(\lambda)$ as given in Definition \ref{def:Monodromy} is frequently called the \emph{Hill discriminant} or sometimes just the \emph{discriminant} \cite{MW66}. We wish to emphasize that as such, this is {\em not} the discriminant $\Delta_q(\lambda)^2 - 4D_q(\lambda)$ of the characteristic polynomial of the monodromy matrix $\M_q(\lambda)$. The reason for our terminology is that the discriminant of the characteristic polynomial of $\M_p(\lambda)$ plays an important role in the analysis that follows but unlike that of $\M_q(\lambda)$ it is non-trivially related to the corresponding trace, and we wish to use parallel language for equations \eqref{eq:p} and \eqref{eq:q}. 
\end{remark}

\section{Fundamental Properties of the Floquet Spectra}
\label{sec:fundamental_properties}
The purpose of introducing Hill's equation \eqref{eq:q} is that much is known about it and its Floquet spectrum. An aim of this section is to state and prove those results which give us some information about the structure of the Floquet spectra of equations \eqref{eq:p} and \eqref{eq:q}, but which are not dependent on knowing the type of the periodic traveling wave $f(z)$  (i.e.\ sub- or superluminal, rotational or librational). 

\subsection{Relating the spectra of equations \eqref{eq:p} and \eqref{eq:q}} \label{subsec:relate}

We will begin by relating the Floquet multipliers of \eqref{eq:p} and \eqref{eq:q}.
\begin{lemma}\label{lem:floquetpandq} The values $\{\rho_+(\lambda),\rho_-(\lambda)\}$ are the Floquet multipliers of \eqref{eq:p} if and only if  the corresponding values $\{\eta_+(\lambda),\eta_-(\lambda)\}:= \{e^{ - c \gamma \lambda T}\rho_+(\lambda),e^{-c\gamma\lambda T}\rho_-(\lambda)\}$ are the Floquet multipliers of \eqref{eq:q}.
\end{lemma}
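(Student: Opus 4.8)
The plan is to track the exponential substitution \eqref{eq:defineq} at the level of the first-order systems \eqref{eq:psystem} and \eqref{eq:qsystem}, read off how the two monodromy matrices are related, and then compare their spectra. Differentiating $q(z)=p(z)e^{-c\gamma\lambda z}$ gives $q'(z)=\bigl(p'(z)-c\gamma\lambda p(z)\bigr)e^{-c\gamma\lambda z}$, so in vector form the change of variables is
\begin{equation*}
\begin{pmatrix} q \\ q' \end{pmatrix} = N(z;\lambda)\begin{pmatrix} p \\ p' \end{pmatrix},\qquad
N(z;\lambda):=e^{-c\gamma\lambda z}\begin{pmatrix} 1 & 0 \\ -c\gamma\lambda & 1 \end{pmatrix}.
\end{equation*}
Since the substitution \eqref{eq:defineq} carries solutions of \eqref{eq:p} to solutions of \eqref{eq:q}, each column of the matrix $N(z;\lambda)\F_p(z;\lambda)$ solves the system \eqref{eq:qsystem}.

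First I would match initial conditions. At $z=0$ one has $N(0;\lambda)\F_p(0;\lambda)=N(0;\lambda)=\begin{pmatrix}1 & 0\\ -c\gamma\lambda & 1\end{pmatrix}$, which is invertible but in general not the identity; hence the fundamental solution matrix of \eqref{eq:qsystem} normalized by $\F_q(0;\lambda)=\Id$ is
\begin{equation*}
\F_q(z;\lambda)=N(z;\lambda)\,\F_p(z;\lambda)\,N(0;\lambda)^{-1}.
\end{equation*}
Next I would evaluate at $z=T$. Because the matrix part of $N$ does not depend on $z$, we have $N(T;\lambda)=e^{-c\gamma\lambda T}N(0;\lambda)$, whence
\begin{equation*}
\M_q(\lambda)=\F_q(T;\lambda)=e^{-c\gamma\lambda T}\,N(0;\lambda)\,\M_p(\lambda)\,N(0;\lambda)^{-1}.
\end{equation*}

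Finally I would conclude from this identity. The matrix $N(0;\lambda)\M_p(\lambda)N(0;\lambda)^{-1}$ is similar to $\M_p(\lambda)$, hence has the same eigenvalues $\{\rho_+(\lambda),\rho_-(\lambda)\}$; scaling by the nonzero number $e^{-c\gamma\lambda T}$ multiplies each eigenvalue by that factor, so the eigenvalues of $\M_q(\lambda)$ are exactly $\{e^{-c\gamma\lambda T}\rho_+(\lambda),\,e^{-c\gamma\lambda T}\rho_-(\lambda)\}$, and since $e^{-c\gamma\lambda T}\neq 0$ and conjugation by $N(0;\lambda)$ is invertible, the correspondence is a bijection, which gives both implications. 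I do not anticipate a real obstacle here; the only point needing care is that $N(0;\lambda)$ is not the identity, so $\M_q(\lambda)$ is not literally a scalar multiple of $\M_p(\lambda)$ but only of a matrix similar to it — which suffices, as similarity preserves eigenvalues.
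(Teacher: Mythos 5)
Your proposal is correct and follows essentially the same route as the paper: both derive the conjugation identity $\M_q(\lambda)=e^{-c\gamma\lambda T}\Hm(\lambda)\M_p(\lambda)\Hm(\lambda)^{-1}$ (your $N(0;\lambda)$ is the paper's $\Hm(\lambda)$) by transporting the fundamental matrix of \eqref{eq:psystem} through the substitution \eqref{eq:defineq} and renormalizing at $z=0$, then conclude via similarity plus the nonzero scalar factor. No gaps.
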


\begin{proof} Let $p$ and $q$ be solutions to \eqref{eq:p} and \eqref{eq:q} respectively, related by equation~\eqref{eq:defineq}: $q = p e^{- c \gamma \lambda z}$. We have: 
\begin{equation}
 \begin{pmatrix} q \\ q'  \end{pmatrix}   = e^{-c \gamma \lambda  z} \Hm(\lambda)\begin{pmatrix} p \\ p' \end{pmatrix} \quad \text{where} \quad \Hm(\lambda) := \begin{pmatrix} 1 & 0 \\ -c \gamma \lambda & 1 \end{pmatrix}.
\end{equation}
Recall the fundamental solution matrices $\F_{p,q}(z;\lambda)$, solving equations \eqref{eq:psystem} or \eqref{eq:qsystem} with the initial conditions $\F_{p,q}(0;\lambda) = \Id$. We let $\Q(z;\lambda)$ be the matrix of functions given by 
\begin{equation}
\Q(z;\lambda) := e^{- c \gamma \lambda z} \Hm(\lambda) \F_p(z;\lambda).
\end{equation}
We observe that $\Q(z;\lambda)$ is a matrix solution to \eqref{eq:q} satisfying $\Q(0;\lambda) = \Hm(\lambda)$. This means that we can write 
\begin{equation}
\Q(z;\lambda) = \F_q(z;\lambda)\Hm(\lambda).
\end{equation}
Equating these last two expressions and evaluating at $z = T$, and using the fact that $\det(\Hm(\lambda)) = 1$, we can then write
\begin{equation}\label{eq:proof1}
\M_q(\lambda) = e^{- c \gamma \lambda T} \Hm(\lambda) \M_p(\lambda) \Hm(\lambda)^{-1}.
\end{equation}
The eigenvalues $\{\eta_+(\lambda),\eta_-(\lambda)\}$ of the left-hand side (Floquet multipliers of \eqref{eq:q}) must therefore equal the eigenvalues $\{e^{-c\gamma\lambda T}\rho_+(\lambda),e^{-c\gamma\lambda T}\rho_-(\lambda)\}$ of the right-hand side (where $\{\rho_+(\lambda),\rho_-(\lambda)\}$ are the Floquet multipliers of \eqref{eq:p}).
\end{proof}

\begin{corollary}\label{cor:pandqimag} The imaginary number $\lambda = i \beta$,  $\beta \in \R$, is in the Floquet spectrum of \eqref{eq:p} if and only if it is in the Floquet spectrum of \eqref{eq:q}. 
\end{corollary}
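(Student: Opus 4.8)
The plan is to deduce the corollary immediately from Lemma~\ref{lem:floquetpandq}, exploiting the single fact that for purely imaginary $\lambda$ the scalar factor $e^{-c\gamma\lambda T}$ relating the two sets of Floquet multipliers is unimodular, so that the two spectral characterizations (``some multiplier on the unit circle'') coincide.

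First I would fix $\lambda = i\beta$ with $\beta \in \R$ and observe that $c$, $\gamma = (c^2-1)^{-1}$, and the fundamental period $T$ are all real, so the exponent $-c\gamma\lambda T = -i c\gamma\beta T$ is purely imaginary; hence $|e^{-c\gamma\lambda T}| = 1$. Next, by Lemma~\ref{lem:floquetpandq} the Floquet multipliers of \eqref{eq:q} at this $\lambda$ are $\eta_\pm(\lambda) = e^{-c\gamma\lambda T}\rho_\pm(\lambda)$, where $\rho_\pm(\lambda)$ are the Floquet multipliers of \eqref{eq:p}; combining with the previous observation gives $|\eta_\pm(i\beta)| = |\rho_\pm(i\beta)|$. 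In particular, at least one $\rho_\pm(i\beta)$ lies on the unit circle if and only if at least one $\eta_\pm(i\beta)$ does. Finally I would invoke Proposition~\ref{prop:FloquetSpectrumP}, which characterizes $\sigma\eqref{eq:p}$ as the set of $\lambda$ for which some $|\rho| = 1$, together with the equivalent characterization in Definition~\ref{def:tempeigenQ} of $\sigma\eqref{eq:q}$ as the set of $\lambda$ for which some $|\eta| = 1$; these two conditions coincide at $\lambda = i\beta$, which is precisely the assertion $i\beta \in \sigma\eqref{eq:p} \iff i\beta \in \sigma\eqref{eq:q}$.

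There is essentially no obstacle here: the entire content of the corollary is carried by Lemma~\ref{lem:floquetpandq}, and the only point requiring any care is the verification that the exponent is purely imaginary, which is exactly where the hypothesis $\lambda \in i\R$ is used. It is worth flagging in the write-up why the equivalence is special to the imaginary axis: for $\lambda$ with $\Re\lambda \neq 0$ one has $|e^{-c\gamma\lambda T}| \neq 1$, so $|\eta_\pm| \neq |\rho_\pm|$ in general and membership in the two spectra genuinely decouples — this is precisely the phenomenon that invalidates the naive isospectrality assumption discussed in the introduction, and it is dealt with separately in the sharper statement proved below.
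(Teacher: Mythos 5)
Your argument is correct and is exactly the one the paper intends: the corollary is stated without proof precisely because it follows immediately from Lemma~\ref{lem:floquetpandq} once one notes that $e^{-c\gamma\lambda T}$ is unimodular for $\lambda\in i\R$, so the multipliers of \eqref{eq:p} and \eqref{eq:q} have equal moduli and the two unit-circle criteria (Proposition~\ref{prop:FloquetSpectrumP} and Definition~\ref{def:tempeigenQ}) coincide. Your closing remark about why the equivalence fails off the imaginary axis matches the content of Lemma~\ref{lem:pandqimag} in the paper.
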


The  Floquet spectra of \pandq\  therefore agree on the imaginary axis. The next lemma says that it is the only place where this occurs.  

\begin{lemma} \label{lem:pandqimag}Suppose that $\lambda \in \sigmaP \cap \sigmaQ$. Then $\lambda$ is purely imaginary. 
\end{lemma}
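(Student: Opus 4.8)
The plan is to argue by contradiction: suppose $\lambda \in \sigmaP \cap \sigmaQ$ but $\lambda$ is not purely imaginary, i.e.\ $\re{\lambda} \neq 0$ (the case $\lambda = 0$ being purely imaginary, we may also assume $\lambda \neq 0$, so in fact $c\gamma\lambda \neq 0$ since $c \neq 0$ --- note one should first dispatch the degenerate case $c = 0$, which only occurs in the subluminal regime and makes the transformation \eqref{eq:defineq} trivial, hence the two spectra literally coincide and there is nothing to prove). With $c\gamma\lambda$ having nonzero real part, the strategy is to extract incompatible information about the Floquet multipliers from the two membership hypotheses and from the determinant identities \eqref{eq:determinants}.

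The key computation runs as follows. Since $\lambda \in \sigmaQ$, by Definition~\ref{def:tempeigenQ} at least one Floquet multiplier of \eqref{eq:q}, say $\eta_+$, satisfies $|\eta_+| = 1$. Because $D_q(\lambda) = \eta_+\eta_- = 1$ from \eqref{eq:determinants}, the other multiplier $\eta_- = 1/\eta_+$ also has modulus $1$. Now invoke Lemma~\ref{lem:floquetpandq}: the Floquet multipliers of \eqref{eq:p} are $\rho_\pm = e^{c\gamma\lambda T}\eta_\pm$, so $|\rho_+| = |\rho_-| = e^{\re{c\gamma\lambda T}} = e^{cT\re{\gamma\lambda}}$, a single positive real number. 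On the other hand, $\lambda \in \sigmaP$ forces (Proposition~\ref{prop:FloquetSpectrumP}) at least one of $\rho_+, \rho_-$ to have modulus $1$; since they share the same modulus, $|\rho_+| = |\rho_-| = 1$. Comparing with the previous line gives $e^{cT\re{\gamma\lambda}} = 1$, hence $cT\re{\gamma\lambda} = 0$; since $c \neq 0$ and $T > 0$ and $\gamma = 1/(c^2-1) \neq 0$ is a nonzero real number, this yields $\re{\lambda} = 0$, i.e.\ $\lambda$ is purely imaginary, contradicting our assumption. (Alternatively and even more directly, one can cross-multiply the determinant identities: $D_p(\lambda) = e^{2c\gamma\lambda T}$ while $\rho_+\rho_- = e^{2c\gamma\lambda T}\eta_+\eta_- = e^{2c\gamma\lambda T} D_q(\lambda) = e^{2c\gamma\lambda T}$, consistent, so the leverage really comes from the \emph{moduli} of the individual multipliers, not their product.)

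I expect the only real subtlety --- the ``main obstacle,'' such as it is --- to be bookkeeping around the degenerate parameter values: one must be sure that $c\gamma\lambda \neq 0$ before concluding that $\re{c\gamma\lambda T} = 0$ implies $\re{\lambda} = 0$. The case $\lambda = 0$ is trivially imaginary; the case $c = 0$ makes \eqref{eq:defineq} the identity and the statement vacuous in the sense that $\sigmaP = \sigmaQ$ is then entirely imaginary anyway (this uses the self-adjointness of \eqref{eq:q} noted in the remarks); and $\gamma \neq 0$, $T > 0$ always. Apart from this, the argument is a short comparison of Floquet-multiplier moduli, leaning entirely on Lemma~\ref{lem:floquetpandq}, Proposition~\ref{prop:FloquetSpectrumP}, Definition~\ref{def:tempeigenQ}, and the determinant formulas \eqref{eq:determinants}. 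No estimates, no special-function machinery, and no case analysis by wave type is needed.
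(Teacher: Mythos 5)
Your core argument (for $c\neq 0$) is correct, and it takes a genuinely different route from the paper. The paper argues at the level of individual solutions: if $\lambda\in\sigmaQ$ then both Floquet multipliers of \eqref{eq:q} are unimodular, so every nonzero solution $q$ is bounded away from zero infinitely often, and hence $p=q\,e^{c\gamma\lambda z}$ is exponentially unbounded unless $\Re(c\gamma\lambda)=0$. Your version works entirely at the level of the monodromy matrices: $|\eta_\pm|=1$ from $D_q\equiv 1$, then $|\rho_\pm|=e^{\Re(c\gamma\lambda T)}$ from Lemma~\ref{lem:floquetpandq}, and membership in $\sigmaP$ forces this \emph{common} modulus to equal $1$. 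This buys you something: it sidesteps the ``bounded away from zero infinitely often'' claim (which requires a word of justification in the degenerate case where $\M_q$ is a nontrivial Jordan block), and it is in effect the identity $G_p(\lambda)=(\Re(c\gamma\lambda T))^2+G_q(\lambda)$ of Lemma~\ref{lem:GpGq} specialized to $G_p=G_q=0$.

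The one genuine error is your dispatch of $c=0$. You assert that in that case the two spectra coincide and are ``entirely imaginary anyway'' by self-adjointness of \eqref{eq:q}. Self-adjointness only forces $\mu\in\R$, i.e.\ $\lambda^2\in\R$, so $\lambda$ is real \emph{or} imaginary. For a librational wave with $c=0$ one has $\mu_0^{(0)}>0$, so $\sigmaP=\sigmaQ$ contains the nonzero real points $\lambda=\pm\bigl(\mu_0^{(0)}\bigr)^{1/2}/|\gamma|$; indeed the proof of Lemma~\ref{lem:librationalunstable} exploits exactly this to establish instability when $c=0$. So the lemma's conclusion actually \emph{fails} at $c=0$: the statement carries an implicit hypothesis $c\neq 0$, made explicit in the neighboring Lemmas~\ref{lem:pabel} and~\ref{lem:realp}, and the paper's own argument likewise only delivers $\Re(c\gamma\lambda)=0$. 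The fix is simply to add the hypothesis $c\neq 0$ (which your main argument already needs), not to claim the $c=0$ case is harmless.
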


\begin{proof} Suppose that $\lambda $ is in the Floquet spectrum of \eqref{eq:q}. Then in particular every nonzero solution of \eqref{eq:q} is bounded away from zero infinitely often as $|z|\to\infty$,  $z\in\R$.   Equation \eqref{eq:defineq} then implies that every non-zero solution to \eqref{eq:p} is exponentially unbounded, and hence $\lambda \notin \sigmaP$, unless $\lambda$ is purely imaginary. 
\end{proof}

We now illustrate some of the main differences between $\sigmaP$ and $\sigmaQ$. 
It follows directly from the fact that $D_q(\lambda) = 1$ that if $\lambda\in\sigmaQ$, then both Floquet multipliers of \eqref{eq:q} lie on the unit circle.  However for \eqref{eq:p} we have instead the following.
\begin{lemma}\label{lem:pabel} 
Assume that $c\neq 0$ (in addition to $c^2\neq 1$).
Let $\lambda \in \sigmaP$ and suppose that $\re{\lambda} \neq 0$. Then one and only one of the Floquet multipliers of \eqref{eq:p} will lie on the unit circle. 
\end{lemma}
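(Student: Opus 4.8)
The plan is to deduce this directly from Proposition~\ref{prop:FloquetSpectrumP} together with the product formula for the Floquet multipliers of \eqref{eq:p} recorded in \eqref{eq:determinants}. Since $\lambda\in\sigmaP$, at least one of the two multipliers, say $\rho_+(\lambda)$, already has $|\rho_+(\lambda)|=1$; the entire content of the lemma is then that the other multiplier $\rho_-(\lambda)$ cannot also lie on the unit circle. So the argument splits into one easy observation and one short computation.

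First I would use the fact from Definition~\ref{def:Floquet_P} that the two Floquet multipliers of \eqref{eq:p} are the roots of $\rho^2-\Delta_p(\lambda)\rho+D_p(\lambda)=0$, so in particular $\rho_+(\lambda)\rho_-(\lambda)=D_p(\lambda)$. By \eqref{eq:determinants} this determinant equals $e^{2c\gamma\lambda T}$. Taking absolute values and using that $c$, $\gamma=1/(c^2-1)$ and $T$ are all real, one obtains $|\rho_-(\lambda)|=|\rho_+(\lambda)|^{-1}|e^{2c\gamma\lambda T}|=e^{2c\gamma T\,\re{\lambda}}$.

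Next I would invoke the standing hypotheses. Since $c\neq0$ and $c^2\neq1$ we have $c\gamma\neq0$, and $T>0$ for every periodic traveling wave; hence the hypothesis $\re{\lambda}\neq0$ forces the exponent $2c\gamma T\,\re{\lambda}$ to be nonzero, so $|\rho_-(\lambda)|\neq1$. Combined with $|\rho_+(\lambda)|=1$ this gives exactly one Floquet multiplier on the unit circle. The same modulus computation also shows the two multipliers are distinct in this regime — a double multiplier $\rho$ would satisfy $\rho^2=e^{2c\gamma\lambda T}$ with $|\rho|=1$, contradicting $\re{\lambda}\neq0$ — so the phrase ``one and only one'' is unambiguous.

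I do not expect any genuine obstacle: the lemma is a two-line consequence of Abel's identity as packaged in \eqref{eq:determinants}. The only point that deserves a sentence of care is ruling out the degenerate case of coincident multipliers, and as noted the very same estimate on $|\rho_-(\lambda)|$ disposes of it at no extra cost.
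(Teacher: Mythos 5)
Your argument is correct and is essentially the paper's own proof: both deduce from Abel's identity \eqref{eq:determinants} that $|\rho_+(\lambda)|\,|\rho_-(\lambda)|=e^{2c\gamma T\,\re{\lambda}}\neq 1$ when $c\gamma\neq 0$ and $\re{\lambda}\neq 0$, so that if one multiplier is unimodular the other cannot be. Your extra sentence ruling out a coincident unimodular pair is harmless but already subsumed in the same modulus computation.
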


\begin{proof} Let $\rho_\pm(\lambda)$ denote the Floquet multipliers of equation \eqref{eq:p}. Equation \eqref{eq:determinants} implies that $|\rho_+(\lambda)|\cdot|\rho_-(\lambda)| = e^{2c\gamma T \re{\lambda}}$. If $\lambda \in \sigmaP$, then one of the factors on the left hand side is equal to 1. But if $\re{\lambda} \neq 0$, then the other factor cannot also be equal to 1.
\end{proof}

\begin{lemma}\label{lem:realp} 
Assume that $c\neq 0$ (in addition to $c^2\neq 1$).
Suppose $\lambda \in \R \setminus \{0\}$ is in the Floquet spectrum of \eqref{eq:p}. Then one of the Floquet multipliers of \eqref{eq:p} satisfies $\rho^2=1$,
i.e., $\lambda$ is either a periodic ($\rho=1$) or antiperiodic ($\rho=-1$) temporal eigenvalue.
\end{lemma}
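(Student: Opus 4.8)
The plan is to combine the reality of the monodromy matrix with the modulus information already extracted in Lemma~\ref{lem:pabel}. First I would observe that when $\lambda\in\R$, the coefficient matrix $\A_p(z;\lambda)$ in \eqref{eq:psystem} is real for every $z$, so the fundamental solution matrix $\F_p(z;\lambda)$ and hence the monodromy matrix $\M_p(\lambda)$ are real $2\times 2$ matrices. Consequently $\Delta_p(\lambda)$ and $D_p(\lambda)$ are real, and the Floquet multipliers $\rho_\pm(\lambda)$, being the roots of the real quadratic $\rho^2-\Delta_p(\lambda)\rho+D_p(\lambda)=0$, are either both real or a complex-conjugate pair.

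Next I would rule out the complex-conjugate case. If $\rho_-(\lambda)=\rho_+(\lambda)^*$ then $|\rho_+(\lambda)|=|\rho_-(\lambda)|$. But since $\lambda\in\R\setminus\{0\}$ we have $\re{\lambda}\neq 0$, so Lemma~\ref{lem:pabel} (here is exactly where the hypothesis $c\neq 0$ is used) says that precisely one of the two multipliers lies on the unit circle while the other does not — incompatible with their having equal moduli. Hence both $\rho_+(\lambda)$ and $\rho_-(\lambda)$ are real.

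Finally, since $\lambda\in\sigmaP$, Proposition~\ref{prop:FloquetSpectrumP} guarantees that at least one Floquet multiplier, say $\rho$, satisfies $|\rho|=1$; being real it must equal $\pm 1$, so $\rho^2=1$. Thus $\lambda$ is a periodic ($\rho=1$) or antiperiodic ($\rho=-1$) temporal eigenvalue, as claimed. There is no real obstacle in this argument: the only point requiring a little care is the elimination of the complex-conjugate alternative, which is handled entirely by Lemma~\ref{lem:pabel} and is the reason the nondegeneracy assumption $c\neq 0$ appears.
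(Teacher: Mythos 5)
Your proof is correct and follows essentially the same route as the paper's: both use the reality of the coefficients to reduce to the real-or-conjugate dichotomy for the multipliers, and both rule out the conjugate pair via the determinant identity \eqref{eq:determinants} (you invoke it indirectly through Lemma~\ref{lem:pabel}, whereas the paper uses it directly by noting the product of unimodular conjugate multipliers would be $1$ while $D_p(\lambda)=e^{2c\gamma\lambda T}\neq 1$). The remaining step — a real multiplier of modulus one must be $\pm 1$ — is identical in both arguments.
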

\noindent Note that by contrast if $\lambda\in \sigmaQ$ is real and nonzero it is not in general a periodic or antiperiodic Floquet eigenvalue of Hill's equation \eqref{eq:q}. 

\begin{proof} Equation~\eqref{eq:determinants} implies that the product of the Floquet multipliers of \eqref{eq:p} is 
positive, and is explicitly given by $e^{2 c \gamma \lambda T}\neq 1$. Since $\lambda\in\R$, the Floquet multipliers are either real or form a complex-conjugate pair.  The latter case can be ruled out because for $\lambda\in\sigmaP$
the product of the (necessarily unimodular) multipliers would equal $1$.  So the multipliers are
both real and since $\lambda\in\sigmaP$ one of them must be $\pm 1$, in which case the other is given by $\pm e^{2c\gamma\lambda T}\neq\pm 1$.
\end{proof}

We take a moment here to highlight some of the consequences for the Floquet spectra of equations \pandq\ 
that are due to the past few results. 

\begin{proposition} \label{cor:sum} Let $\lambda$ be a point where the discriminant of \eqref{eq:p} vanishes, i.e.\ the Floquet multipliers are equal. Denote the equal values by $\rho$. Then 
\begin{enumerate}
\item The Floquet multipliers of \eqref{eq:q} are also equal. Denoting the equal values by $\eta$, we have that $\eta = \pm 1$ and correspondingly $\rho = \pm e^{c \gamma \lambda T}$.
\item $\lambda$ is either real or purely  imaginary. If it is real and non-zero, then $\lambda \notin \sigmaP$, while if it is purely imaginary (including zero), then $\lambda \in \sigmaP$.  
\end{enumerate}
\end{proposition}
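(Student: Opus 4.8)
The plan is to exploit the two determinant identities \eqref{eq:determinants} together with Lemma~\ref{lem:floquetpandq} to pin down the repeated multiplier, and then use Lemmas~\ref{lem:pabel} and~\ref{lem:realp} to conclude that $\lambda$ lives on one of the axes. First I would handle part~(1). Suppose the discriminant of \eqref{eq:p} vanishes at $\lambda$, so that $\rho_+(\lambda)=\rho_-(\lambda)=:\rho$. By Lemma~\ref{lem:floquetpandq} the Floquet multipliers of \eqref{eq:q} are $\eta_\pm(\lambda)=e^{-c\gamma\lambda T}\rho_\pm(\lambda)$, which are therefore also equal, with common value $\eta=e^{-c\gamma\lambda T}\rho$. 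Since $D_q(\lambda)=1$ and the two multipliers of \eqref{eq:q} multiply to $D_q(\lambda)$, we get $\eta^2=1$, hence $\eta=\pm 1$; correspondingly $\rho=e^{c\gamma\lambda T}\eta=\pm e^{c\gamma\lambda T}$, which is exactly the claimed relation. (Equivalently one can read $\eta^2=D_q=1$ directly from the characteristic equation in Definition~\ref{def:Floquet_Q} at a double root.)

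For part~(2), the point is that a double multiplier of \eqref{eq:q} means $\lambda$ is a periodic or antiperiodic eigenvalue of Hill's equation \eqref{eq:q}: indeed $\eta=\pm 1$ and this is precisely the condition $\Delta_q(\lambda)=\pm 2$. It is a standard fact from Hill's equation theory (the discriminant $\Delta_q$ is real-analytic and real-valued on $\R$ and purely real on $i\R$, and $|\Delta_q|>2$ off these axes because $\sigma\eqref{eq:q}\subset\R\cup i\R$) that $\Delta_q(\lambda)=\pm 2$ forces $\lambda$ to be real or purely imaginary; here it is cleanest to note that $\eta=\pm1$ means in particular $|\eta|=1$, so $\lambda\in\sigma\eqref{eq:q}$, and $\sigma\eqref{eq:q}$ is confined to the real and imaginary axes (as recalled after \eqref{eq:determinants}). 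Thus $\lambda\in\R$ or $\lambda\in i\R$.

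It remains to decide membership in $\sigma\eqref{eq:p}$ in the two cases. If $\lambda$ is real and nonzero: were $\lambda\in\sigma\eqref{eq:p}$, Lemma~\ref{lem:realp} would give $\rho^2=1$, i.e.\ $\rho=\pm1$; but from part~(1), $\rho=\pm e^{c\gamma\lambda T}$, and $e^{c\gamma\lambda T}\neq\pm 1$ since $c\neq 0$, $c^2\neq1$, $\lambda\neq0$ — a contradiction. Hence $\lambda\notin\sigma\eqref{eq:p}$. If $\lambda$ is purely imaginary (including $\lambda=0$): then $\lambda\in\sigma\eqref{eq:q}$ by the above, and Corollary~\ref{cor:pandqimag} gives $\lambda\in\sigma\eqref{eq:p}$. (Alternatively, for purely imaginary $\lambda$ one has $|e^{c\gamma\lambda T}|=1$, so $|\rho|=1$ by part~(1), and Proposition~\ref{prop:FloquetSpectrumP} applies.) This completes the proof.

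The main obstacle — really the only delicate point — is justifying in part~(2) that a double multiplier of \eqref{eq:q} forces $\lambda\in\R\cup i\R$; one must be careful to invoke the self-adjointness/reality of Hill's spectrum correctly, namely that $\eta=\pm1\Rightarrow|\eta|=1\Rightarrow\lambda\in\sigma\eqref{eq:q}\subset\R\cup i\R$, rather than attempting a direct argument with $\Delta_q$. Everything else is bookkeeping with \eqref{eq:determinants} and the earlier lemmas.
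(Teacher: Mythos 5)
Your proof is correct and follows essentially the same route as the paper's. Part (1) is the paper's argument verbatim: Lemma~\ref{lem:floquetpandq} gives equality of the $\eta$'s, $D_q(\lambda)=1$ from \eqref{eq:determinants} gives $\eta=\pm 1$, and Lemma~\ref{lem:floquetpandq} again gives $\rho=\pm e^{c\gamma\lambda T}$; likewise the localization to $\R\cup i\R$ via $|\eta|=1\Rightarrow\lambda\in\sigmaQ$ and self-adjointness of Hill's equation is exactly what the paper does, and your handling of the purely imaginary case via Corollary~\ref{cor:pandqimag} is also the paper's. The one point of divergence is how you exclude real nonzero $\lambda$ from $\sigmaP$: the paper cites Lemma~\ref{lem:pandqimag} (membership in $\sigmaP\cap\sigmaQ$ forces $\lambda\in i\R$), whereas you cite Lemma~\ref{lem:realp} together with the explicit value $\rho=\pm e^{c\gamma\lambda T}\neq\pm 1$. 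Both are one-line applications of earlier results and both tacitly use $c\neq 0$ (for $c=0$ equations \eqref{eq:p} and \eqref{eq:q} coincide and a positive periodic eigenvalue of \eqref{eq:q} \emph{does} lie in $\sigmaP$, as the paper itself exploits later for librational waves); your version has the small virtue of making that hypothesis explicit, and in fact could be streamlined further by just noting $|\rho|=e^{c\gamma\lambda T}\neq 1$ and invoking Proposition~\ref{prop:FloquetSpectrumP} directly.
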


\begin{proof} The fact that the Floquet multipliers of \eqref{eq:q} are equal follows from Lemma~\ref{lem:floquetpandq}. Formula~\eqref{eq:determinants} implies that $\eta^2 = D_q(\lambda) = 1$, hence $\eta = \pm 1$. Applying Lemma~\ref{lem:floquetpandq} again implies that $\rho = \pm e^{c \gamma \lambda T}$. Since $|\eta| = 1$, we have $\lambda \in \sigmaQ$, and by self-adjointness of \eqref{eq:q}, this means that $\im{\lambda^2} = 0$. Corollary~\ref{cor:pandqimag} and Lemma~\ref{lem:pandqimag} together imply the dichotomy in statement (2). 
\end{proof}

\subsection{The spectrum of equation \eqref{eq:q}} \label{sec:hillspec}

A general reference for the Hill's equation theory in this section is Magnus and Winkler \cite{MW66}.  Consider the Hill's operator
\begin{equation}\label{eq:hillop}
L:=\frac{d^2}{dz^2}+\gamma\cos(f(z)),
\end{equation}
where $f(z)$ is the $T$-periodic (mod $2\pi$) traveling wave solution of the sine-Gordon equation about which we are linearizing. The spectrum $\sigmaQ$ is characterized by nonzero solutions $q$ of
\begin{equation}
Lq(z)=\mu^{(\theta)} q(z),\quad q(z+T)=e^{i\theta}q(z),
\label{eq:HillEV}
\end{equation}
where $\theta$ is a \emph{real} phase angle.  (Recall the relation $\mu=\gamma^2\lambda^2$; in this section we will consider $\mu$ rather than $\lambda$ as the key spectral parameter for Hill's equation \eqref{eq:q}.)

The numbers $\mu^{(0)}$ correspond to the periodic eigenvalues of \eqref{eq:q}, while the numbers
$\mu^{(\pi)}$ correspond to the antiperiodic eigenvalues.
The set of $\mu^{(\theta)}\in\mathbb{R}$ for which there exists a nontrivial solution of \eqref{eq:HillEV} will be denoted $\Sigma_\theta({\bf Q})$.  Since $\cos (f(z))$ is real, it is clear that $\Sigma_{-\theta}({\bf Q})=\Sigma_\theta({\bf Q})$.  The Floquet spectrum $\sigmaQ$ is related to the union of these sets over $\theta$ as follows:
\begin{equation}
\gamma^2\sigmaQ^2=\Sigma({\bf Q}):=\bigcup_{-\pi<\theta\le\pi}\Sigma_\theta({\bf Q}).
\end{equation}
The $\mu$-Floquet spectrum $\Sigma({\bf Q})$ of Hill's operator \eqref{eq:hillop} is bounded above.  It consists of the union of closed intervals
\begin{equation}
\Sigma({\bf Q})=\bigcup_{n=0}^\infty  [\mu^{(0)}_{2n+1},\mu_{2n+2}^{(\pi)}]\cup[\mu^{(\pi)}_{2n+1},\mu_{2n}^{(0)}]
\end{equation}
where the sequences $\Sigma_0({\bf Q}):=\{\mu^{(0)}_{j}\}$ and $\Sigma_\pi({\bf Q}):=\{\mu^{(\pi)}_{j}\}$ decrease to $-\infty$ and satisfy the inequalities
\begin{equation}
\cdots<\mu_4^{(\pi)}\le\mu_3^{(\pi)}<\mu_2^{(0)}\le\mu_1^{(0)}<\mu_2^{(\pi)}\le\mu_1^{(\pi)}<\mu_0^{(0)}.
\end{equation}

The function $f'(z)$ is a nontrivial $T$-periodic solution ($\theta=0$) when $\mu=0$, that is, $Lf'(z)=0$, and $f'(z+T)=f'(z)$.  Hence one of the periodic eigenvalues $\mu^{(0)}_{j}$ coincides with $\mu=0$, and the value of $j$ is determined by oscillation theory (see  \cite[Thm.\ 2.14 (Haupt's Theorem)]{MW66}\footnote{The statement of Haupt's theorem in \cite{MW66} contains some typographical errors; the second sentence of Theorem~2.14 should be corrected to read ``If $\lambda=\lambda'_{2n-1}$ or $\lambda=\lambda'_{2n}$, then $y$ has exactly $2n-1$ zeros in the half-open interval $0\le x<\pi$.''} or   \cite[Thm.\ 8.3.1]{CL55}).  For librational $f$, $f'(z)$ has exactly two zeros per period and hence either $\mu^{(0)}_{1}=0$ or $\mu^{(0)}_2=0$.  On the other hand, for rotational $f$, $f'(z)$ has no zeros at all and hence $\mu^{(0)}_{0}=0$. 
Therefore, we have the following dichotomy for the spectrum of Hill's equation:
\begin{itemize}
\item
For librational $f$, the positive part of the $\mu$-Floquet spectrum $\Sigma({\bf Q})$ consists of the intervals $[\mu^{(0)}_1,\mu^{(\pi)}_2]\cup[\mu^{(\pi)}_1,\mu_0^{(0)}]$ (which may merge into a single interval if $\mu^{(\pi)}_1=\mu^{(\pi)}_2$).  It is possible that $\mu^{(0)}_1=0$, but not necessary; otherwise $\mu^{(0)}_2=0$ and $\mu^{(0)}_1>0$.  
\item 
For rotational $f$, the $\mu$-Floquet spectrum $\Sigma({\bf Q})$ is a subset of the closed negative $\mu$ half-line, and $\mu^{(0)}_0=0$ belongs to the spectrum.
\end{itemize}
In both cases the $\mu$-Floquet spectrum has an unbounded negative part and consists of bands separated by gaps. The bands correspond to $\left| \Delta_q \right| \leq 2$, and the gaps correspond to $ \left| \Delta_q \right| > 2$. 
Whether or not $\mu_1^{(0)}=0$ for librational waves can be resolved as follows.
\begin{lemma}\label{lem:mu1} Let $f(z)$ be a librational traveling wave of the sine-Gordon equation.  Then $0=\mu_1^{(0)}> \mu_2^{(0)}$.
\end{lemma}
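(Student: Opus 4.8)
The plan is to pin down $\mu=0$ among the periodic eigenvalues of Hill's equation \eqref{eq:q} by exhibiting explicitly, for each librational wave, \emph{two} distinct $T$-periodic eigenfunctions having the same nodal count as $f'$, and then quoting the oscillation theory (Haupt's theorem) recalled above. The first such eigenfunction is $f'$ itself: differentiating the pendulum equation \eqref{eq:pendulum} gives $Lf'=0$ with $L$ the Hill operator \eqref{eq:hillop}, and since a librational orbit is a closed loop $f$ and hence $f'$ are genuinely $T$-periodic, with $f'$ vanishing exactly at the two turning points of the libration within each period. Thus $\mu=0$ is a periodic ($\theta=0$) eigenvalue in \eqref{eq:HillEV} whose eigenfunction has precisely two zeros in $[0,T)$, so by Haupt's theorem $0\in\{\mu_1^{(0)},\mu_2^{(0)}\}$.

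Next I would produce a second eigenfunction by a half-angle substitution. Differentiating $\sin(f/2)$ and $\cos(f/2)$ twice, eliminating $f''$ with \eqref{eq:pendulum}, eliminating $(f')^2$ with the first integral \eqref{eq:pendint}, and simplifying with $\cos f = 1-2\sin^2(f/2)=2\cos^2(f/2)-1$, one obtains the identities
\begin{equation*}
L\!\left[\sin\tfrac{f}{2}\right]=-\tfrac{\gamma E}{2}\,\sin\tfrac{f}{2},
\qquad
L\!\left[\cos\tfrac{f}{2}\right]=\tfrac{\gamma(2-E)}{2}\,\cos\tfrac{f}{2}.
\end{equation*}
For a superluminal librational wave $\gamma>0$ and $f$ oscillates about $0$ (a minimum of $1-\cos f$), so $\cos(f/2)$ is nowhere zero while $\sin(f/2)$ vanishes exactly twice per period, each time $f$ passes through $0$; here the relevant eigenvalue is $-\gamma E/2<0$. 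For a subluminal librational wave $\gamma<0$ and $f$ oscillates about $\pi$, so the roles of $\sin(f/2)$ and $\cos(f/2)$ are interchanged, and the relevant eigenvalue is $\gamma(2-E)/2<0$ (using $0<E<2$). In either case we obtain a $T$-periodic eigenfunction with exactly two zeros in $[0,T)$ at a strictly negative value of $\mu$, so Haupt's theorem places that eigenvalue in $\{\mu_1^{(0)},\mu_2^{(0)}\}$ as well. (These three explicit eigenfunctions $f'$, $\sin(f/2)$, $\cos(f/2)$ are in fact the Lam\'e functions of the reduction carried out in the appendix, so this step can equivalently be read off from there.)

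Since the two eigenvalues produced --- namely $0$ and a negative number --- are distinct, they must be exactly $\mu_1^{(0)}$ and $\mu_2^{(0)}$; as $\mu_1^{(0)}\ge\mu_2^{(0)}$ by our labelling convention, this forces $\mu_1^{(0)}=0>\mu_2^{(0)}$. The only genuine computation is the verification of the two displayed identities, which is mechanical once $f''$ and $(f')^2$ have been substituted. The point requiring care is the application of Haupt's theorem: one must remember that the monodromy is always taken over the fundamental period $T$ of $f$ (which for librational waves is twice the period of $\cos f$), and check that $f'$, $\sin(f/2)$ and $\cos(f/2)$ are all honestly $T$-periodic, hence genuine $\theta=0$ eigenfunctions in \eqref{eq:HillEV} rather than antiperiodic ones.
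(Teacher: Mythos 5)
Your argument is correct, but it takes a genuinely different route from the paper's. You pin down $\mu_1^{(0)}=0$ by producing, alongside $f'$, a second \emph{explicit} $T$-periodic eigenfunction of the Hill operator \eqref{eq:hillop} --- namely $\sin(f/2)$ in the superluminal case and $\cos(f/2)$ in the subluminal case --- with exactly two (simple) zeros per period and a strictly negative eigenvalue, and then apply Haupt's theorem twice; your two displayed identities do check out against \eqref{eq:pendulum} and \eqref{eq:pendint}, the genuine $T$-periodicity holds because a librational orbit is a closed loop in the phase plane, and the nodal counts are as you claim (the distinction between which of $\sin(f/2)$, $\cos(f/2)$ vanishes is exactly the sub/superluminal dichotomy of which equilibrium the libration encircles). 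The paper instead proves the equivalent derivative condition \eqref{eq:deltaprimezero}: it builds the monodromy matrix at $\mu=0$ in the form \eqref{eq:Mqzero} from the solutions $f_z$ and $f_E$, derives the sign identity \eqref{eq:keyid} relating $\left.\tfrac{d}{d\mu}\Delta_q\right|_{\mu=0}$ to $(c^2-1)T_E$, and then verifies the period-energy inequality \eqref{eq:TEnegative} by differentiating the period integral. What your approach buys is brevity and extra information: the three functions $f'$, $\sin(f/2)$, $\cos(f/2)$ are precisely the $\nu=1$ Lam\'e products of the appendix, so you get the exact values $\mu_2^{(0)}=-\gamma E/2$ (superluminal) or $\gamma(2-E)/2$ (subluminal), and as a by-product $\mu_0^{(0)}>0$. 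What the paper's approach buys is robustness: the criterion $\sgn\bigl(\left.\tfrac{d}{d\mu}\Delta_q\right|_{\mu=0}\bigr)=-\sgn\bigl((c^2-1)T_E\bigr)$ needs no closed-form eigenfunctions and carries over to general Klein--Gordon potentials (cf.\ the remark following the lemma and \cite{JMMP2}), which is why the authors take that route even though it is longer for sine-Gordon itself.
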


\begin{proof}
We begin by characterizing the dependence of the fundamental period $T$ on $c$ and $E$.  Since $f(z)$ is a librational traveling wave,  we have $0<E<2$.  By integrating \eqref{eq:pendint} along the appropriate orbits inside the separatrix, we obtain:
\begin{equation}
T=\begin{cases} \sqrt{2(c^2-1)}P(E),&\quad c^2>1\\\\
\sqrt{2(1-c^2)}P(2-E),&\quad c^2<1,
\end{cases}
\label{eq:period}
\end{equation}
where
\begin{equation}
P(E):= 2\int_0^{\arccos(1-E)}\frac{df}{\sqrt{E-1+\cos(f)}}.
\end{equation}
(The subluminal period formula follows from the superluminal period formula by the substitution $f\mapsto f+\pi$.)

Now, since $\lambda=0$ is a periodic eigenvalue of \eqref{eq:q}, we have $\Delta_q=2$ 
when $\mu=0$ and in this case there are two alternatives:
\begin{equation}
\mu_1^{(0)}=0 > \mu_2^{(0)} \;\Leftrightarrow\; \left.\dfrac{d}{d\mu}\Delta_q\right|_{\mu=0}<0
\quad\text{or}\quad
\mu_2^{(0)} = 0 \leq \mu_1^{(0)}\;\Leftrightarrow\; \left.\dfrac{d}{d\mu}\Delta_q\right|_{\mu=0}\geq0,
\end{equation}
so it is enough to show that
\begin{equation}
\left.\frac{d}{d\mu}\Delta_q\right|_{\mu=0}<0.
\label{eq:deltaprimezero}
\end{equation}
%
%
%

In order to establish \eqref{eq:deltaprimezero}, we will first show that the monodromy matrix of equation \eqref{eq:q} takes the form: 
\begin{equation}
\M_q(0)= \begin{pmatrix} 1 & -v_0(E,c)^2 (c^2-1)T_E \\ 0 & 1 \end{pmatrix},
\label{eq:Mqzero}
\end{equation}
where $v_0(E,c)$ is a real non-zero constant related to the function $f(z)$ that will be  defined below, and where $T_E=\partial T/\partial E$.


To prove \eqref{eq:Mqzero}, fix a wave speed $c\neq \pm 1$ and a total energy $E\in (0,2)$.  Let $f(z;E,c)$ be the periodic wave uniquely determined modulo $2\pi$ from equation \eqref{eq:pendulum}, having total energy $E$, and for which 
\begin{equation}
f_z(0;E,c)>0\quad\text{and}\quad \sin(f(0;E,c))=0.
\label{eq:sinIC}
\end{equation}
We claim that 
when $\mu=0$
the two-dimensional vector space of solutions to \eqref{eq:qsystem} is spanned by the following (subscripts denote partial derivatives): 
\begin{equation}
\begin{pmatrix} f _ z \\ f_{zz} \end{pmatrix} \quad \textrm{ and } \quad \begin{pmatrix} f_{E} \\ f_{Ez} \end{pmatrix}
\end{equation}
To see this, first differentiate equation \eqref{eq:pendulum} with respect to either $z$ or $E$ to see that both vectors solve equation \eqref{eq:qsystem} 
for $\mu=0$.
Then to prove independence,  differentiate equation \eqref{eq:pendint} with respect to $E$ to get 
\begin{equation}
(c^2-1) f_z f_{zE} = 1 -f_E \sin( f ).
\end{equation}
\noindent Using this and \eqref{eq:pendulum} gives 
\begin{equation}\label{eq:detq}
\begin{vmatrix} f_z & f_E \\ f_{zz} & f_{Ez} \end{vmatrix} = f_zf_{Ez} - f_E f_zz = \gamma = \frac{1}{c^2-1} \neq 0 .
\end{equation}
\noindent We may therefore define a fundamental solution matrix at $\mu = 0$ by: 
\begin{equation}
\Q(z,0) = \begin{pmatrix} f_z(z;E,c) & f_E(z;E,c) \\ f_{zz}(z;E,c) & f_{Ez}(z;E,c) \end{pmatrix}. 
\end{equation}
Of course the unique fundamental solution matrix $\F_q(z,0)$ satisfying $\F_q(0,0)=\Id$ can then be expressed as $\F_q(z,0)=\Q(z,0)\Q(0,0)^{-1}$.
In particular, by setting $z=T$ we can thus write the monodromy matrix at $\mu = 0$ as 
\begin{equation}
\M_q(0)=\F_q(T,0) = \Q(T,0) \Q(0,0)^{-1}.
\label{eq:Mq0-1}
\end{equation}
Next we wish to explicitly relate $\Q(T,0)$ with $\Q(0,0)$. 
Denote the initial values of $f$ and $f_z$ as follows:
\begin{equation}
\begin{split}
u_0(E,c) &  := f(0;E,c) = f(T;E,c), \label{eq:ic}  \\ 
v_0(E,c)  & := f_z(0;E,c) = f_z(T;E,c).
\end{split}
\end{equation}  
Differentiate the initial conditions \eqref{eq:ic} with respect to $E$ to get that 
\begin{equation}
\begin{split}
\partial_E u_0 & = f_E(T;E,c) + T_Ef_z(T;E,c)  \\ 
& =  f_E(T;E,c) + T_Ef_z(0;E,c) \\ 
& = f_E(T;E,c) + T_E v_0(E,c) 
\end{split}
\end{equation}
\noindent and 
\begin{align}
f_{Ez}(0;E,c) = \partial_E v_0 (E,c) & = T_E f_{zz}(T;E,c) + f_{E z}(T;E,c).
\end{align}
Using periodicity of $f$ and $\sin(f)$, equation \eqref{eq:pendulum} and the initial conditions \eqref{eq:sinIC} imply that 
\begin{align}
f_{zz} (T;E,c) = - \gamma \sin(f(0;E,c))=0.
\end{align}
\noindent Combining these observations,        we conclude: 
\begin{equation}
\label{eq:Q0}
\begin{split}
\Q(0,0) &= \begin{pmatrix}
           f_z(0;E,c) & f_E(0;E,c) \\ f_{zz}(0;E,c) & f_{Ez}(0;E,c)
          \end{pmatrix} \\ &= \begin{pmatrix}
           v_0(E,c) & \partial_E u_0(E,c) \\ 0 & \partial_E v_0(E,c)
          \end{pmatrix},
\end{split}
\end{equation}
and likewise,
\begin{equation}
\begin{split}
\Q(T,0) &= \begin{pmatrix}
           f_z(T;E,c) & f_E(T;E,c) \\ f_{zz}(T;E,c) & f_{Ez}(T;E,c)
          \end{pmatrix} \\
&= \begin{pmatrix}
           v_0(E,c) & \partial_E u_0(E,c) - T_E v_0(E,c)\\ 0 & \partial_E v_0(E,c)           \end{pmatrix}\\
&= \Q(0,0) + \begin{pmatrix}
              0 & -T_E v_0(E,c) \\ 0 & 0
             \end{pmatrix}.
\end{split}
\end{equation}
Using these in \eqref{eq:Mq0-1} along with $\det(\Q(0,0))=\gamma$ yields \eqref{eq:Mqzero}.

Now, differentiation\footnote{This part of the argument follows closely \cite[pp. 15--18]{MW66}.} of the equation $\F_q'(z,\lambda)=\A_q(z,\lambda)\F_q(z,\lambda)$ (cf.\ equation \eqref{eq:qsystem}) with respect to $\mu=\gamma^2\lambda^2$ gives
\begin{equation}
\F_{q,\mu}'(z,\lambda)-\A_q(z,\lambda)\F_{q,\mu}(z,\lambda) = \A_{q,\mu}(z,\lambda)\F_q(z,\lambda)=\begin{pmatrix}0 & 0\\1 & 0\end{pmatrix}\F_q(z,\lambda).
\end{equation}
The general solution of this equation is obtained by variation of parameters, that is, by the substitution $\F_{q,\mu}(z,\lambda)=\F_q(z,\lambda)\W(z,\lambda)$ for some new unknown matrix $\W(z,\lambda)$.  Note that the initial condition $\F_q(0,\lambda)=\Id$, $\forall\lambda$ implies that $\W(0,\lambda)=0$.  It follows easily (also using $\det(\F_q(z,\lambda))=1$) that
\begin{equation}
\F_{q,\mu}(z,\lambda)=\F_q(z,\lambda)\int_0^z\begin{pmatrix}-\F_{q,11}(\zeta,\lambda)\F_{q,12}(\zeta,\lambda) & -\F_{q,12}(\zeta,\lambda)^2\\
\F_{q,11}(\zeta,\lambda)^2 & \F_{q,11}(\zeta,\lambda)\F_{q,12}(\zeta,\lambda)
\end{pmatrix}\,d\zeta.
\end{equation}
Setting $z=T$ and taking the trace gives
\begin{multline}
\frac{d}{d\mu}\Delta_q=(\M_{q,22}(\lambda)-\M_{q,11}(\lambda))\int_0^T\F_{q,11}(\zeta,\lambda)\F_{q,12}(\zeta,\lambda)\,d\zeta\\
{} -\M_{q,21}(\lambda)\int_0^T\F_{q,12}(\zeta,\lambda)^2\,d\zeta +\M_{q,12}(\lambda)\int_0^T\F_{q,11}(\zeta,\lambda)^2\,d\zeta.
\end{multline}
Upon taking $\lambda=0$ (and hence $\mu=0$) and using \eqref{eq:Mqzero} we see that
\begin{equation}
\left.\frac{d}{d\mu}\Delta_q\right|_{\mu=0}=-v_0(E,c)^2(c^2-1)T_E\int_0^T\F_{q,11}(\zeta,0)^2\,d\zeta.
\end{equation}
Because $\F_q(z,\lambda)$ is real-valued for all $\mu\in\mathbb{R}$ and $\F_{q,11}(z,0)$ does not vanish identically for $0<z<T$ because $\F_{q,11}(0,0)=1$, we easily conclude that
\begin{equation}\label{eq:keyid}
\sgn\left(\left.\frac{d}{d\mu}\Delta_q\right|_{\mu=0}\right)=-\sgn\left((c^2-1)T_E\right),
\end{equation}
where we have used the fact that $v_0(E,c)\neq 0$.


Finally,
we show that 
the inequality
\begin{equation}
 (c^2-1)T_E > 0 
 \label{eq:TEnegative}
 \end{equation}
holds strictly for {\em all} librational waves regardless of speed.
The proof is by direct computation of $T_E$. Recalling \eqref{eq:period}, we compute $P'(E)$ as follows:  since $0<E<2$, and hence $\arccos(1-E)<\pi$, we can make the substitution $w=\cos(f)$ implying $dw=-\sin(f)\,df=-\sqrt{1-w^2}\,df$ and hence obtain:
\begin{equation}
P(E)=
  2\int^1_{1-E} \frac{d w}{\sqrt{(1-w^2)(E-1+w)}}. 
 \end{equation}
 \noindent Next, making the substitution $ w = E(u-1) + 1$, so that $dw = E\, du$, yields: 
 \begin{equation}
P(E) =
2\int_0^1 \frac{du}{\sqrt{-E(u-1)^2 -2(u-1)}\sqrt{u}}.
 \end{equation}
 \noindent It is now easy to take the derivative with respect to $E$: 
 \begin{equation}
 P'(E)=
   \int_0^1\frac{(u-1)^2 du}{(-E(u-1)^2 - 2(u-1))^{\frac{3}{2}} \sqrt{u}} > 0. 
 \end{equation}
 This immediately implies \eqref{eq:TEnegative} 
 and hence completes the proof of the lemma.
\end{proof}

 \begin{remark} We note here that identity \eqref{eq:keyid} in Lemma \ref{lem:mu1}, which connects the non-degeneracy of the operator $L$ to the non-vanishing of the quantity $T_E$ has been established in other contexts. For example, in terms of traveling waves in the generalized Korteweg-de Vries equation, see \cite{jhn1}, and for its appearance in the study of periodic waves in the nonlinear Schr\"odinger equation see  \cite{GaHa2}. \end{remark}
Because \eqref{eq:q} can in fact be written as the $\nu=1$ form of Lam\'e's equation (see the Appendix for details) 
the structure of the Floquet spectrum $\Sigma({\bf Q})$ (for both librational and rotational waves) is even simpler as there is exactly one (open) gap.
\begin{figure}[h]
\begin{center}
\includegraphics{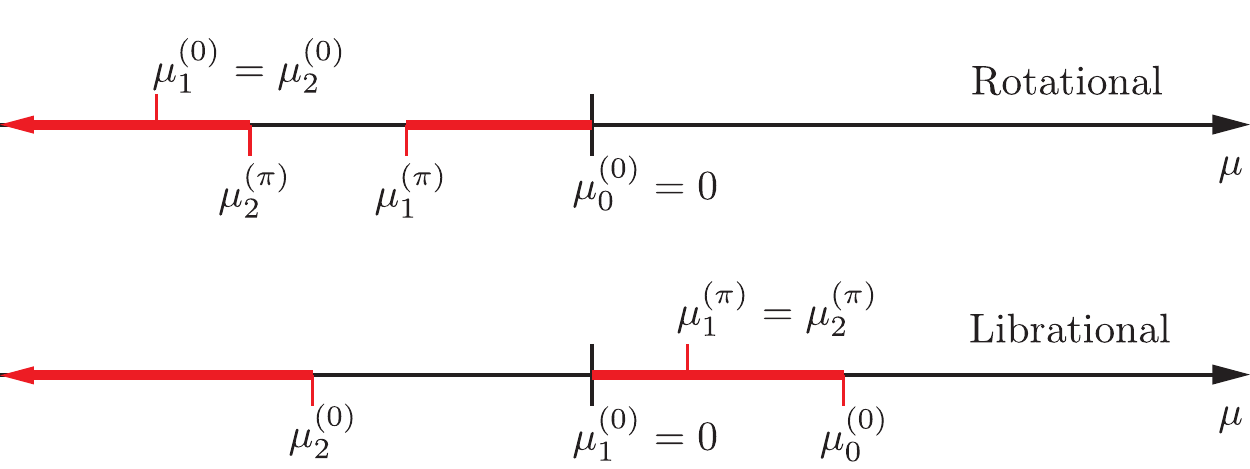}
\end{center}
\caption{A qualitative illustration of the values $\mu$ for which there exists a bounded solution to equation \eqref{eq:q}. The fact that there is a single gap corresponds to the fact that \eqref{eq:q} is reducible to Lam\'{e}'s equation. The placement of the band gap in the librational wave case is due to the fact that in this case $d\Delta_q/d\mu <0$ at $\mu=0$. 
The Floquet spectrum of \eqref{eq:q} in the $\lambda$-plane is obtained from two copies of the $\mu$-spectrum by the relation $\lambda=\pm|\gamma|^{-1}\mu^{1/2}$.
}
\label{fig:qspec}
\end{figure}

Another consequence of the preceding analysis is that it gives us a qualitative picture of the trace $\Delta_q$ for real values of $\mu$. The following result applies to all four types of periodic traveling waves of the sine-Gordon equation: 
\begin{corollary} For each periodic traveling wave $f$, there exists a point $\mu_* <0$ for which $|\Delta_q|>2$ at $\mu=\mu_*$, and therefore there exist two nonzero imaginary points $\lambda=\pm i\beta_*$, $\beta_*>0$, $\mu_*=(i\gamma\beta_*)^2$, that are \emph{not} in the spectrum of \eqref{eq:q}.
\label{cor:mustar}
\end{corollary}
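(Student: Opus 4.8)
The plan is to exploit the unbounded-below structure of the $\mu$-Floquet spectrum $\Sigma({\bf Q})$ together with the band-gap dichotomy stated just above. Recall that $\Sigma({\bf Q})$ is bounded above and consists of bands separated by at least one gap, and in fact (via the reduction to Lam\'e's equation) exactly one open gap; in particular, since $\Sigma({\bf Q})$ extends to $-\infty$ but is a proper subset of $\R$, there is some open interval of real $\mu$-values disjoint from $\Sigma({\bf Q})$. Concretely, I would take $\mu_*$ to be any point in that single open gap. By the characterization of bands and gaps recalled in the excerpt — bands correspond to $|\Delta_q|\le 2$ and gaps to $|\Delta_q|>2$ — we have $|\Delta_q(\mu_*)|>2$, which is the first assertion.

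For the second assertion, I would simply invert the relation $\mu=\gamma^2\lambda^2$, i.e. $\mu=(\gamma\lambda)^2$. Since $\mu_*<0$ and $\gamma=1/(c^2-1)$ is a nonzero real number, the equation $(\gamma\lambda)^2=\mu_*$ has the two purely imaginary solutions $\lambda=\pm i\beta_*$ with $\beta_*:=|\gamma|^{-1}\sqrt{-\mu_*}>0$, so that indeed $\mu_*=(i\gamma\beta_*)^2$. Because $|\Delta_q(\mu_*)|>2$, neither Floquet multiplier $\eta$ of \eqref{eq:q} at this value of $\mu$ is unimodular (the product of the multipliers is $D_q=1$ and their sum is $\Delta_q$ with $|\Delta_q|>2$, forcing a real pair bounded away from the unit circle). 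Hence by Definition~\ref{def:tempeigenQ} the points $\lambda=\pm i\beta_*$ do not lie in $\sigmaQ$.

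The only genuine content to check is the existence of the gap, i.e. that $\Sigma({\bf Q})\ne\R_{\le 0}$ in the rotational case and $\Sigma({\bf Q})\ne\{0\}\cup(-\infty,0]\cup[\mu_1^{(0)},\mu_0^{(0)}]$ (or the analogous merged interval) in the librational case. In both cases this follows from the band-gap structure already laid out in \S\ref{sec:hillspec}: the inequalities $\cdots<\mu_2^{(0)}\le\mu_1^{(0)}<\mu_2^{(\pi)}$ guarantee that between consecutive periodic/antiperiodic eigenvalues there is at least one nonempty open gap on the negative real axis (e.g. the interval $(\mu_2^{(\pi)},\mu_1^{(0)})$ if it is nonempty, or more robustly the Lam\'e single-gap identified in the Appendix). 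I would phrase the argument so as to pick $\mu_*$ in that gap. The main obstacle — really the only place one must be careful — is to make sure the chosen gap genuinely lies in the \emph{negative} $\mu$ half-line for \emph{all four} wave types simultaneously; this is automatic for rotational waves since there $\Sigma({\bf Q})\subset(-\infty,0]$ already, and for librational waves one uses that $\Sigma({\bf Q})$ has an unbounded negative part with infinitely many spectral gaps accumulating at $-\infty$, so at least one such gap (indeed, by the Lam\'e reduction, the unique gap can be arranged, or else any sufficiently negative gap) sits strictly below $0$. Everything else is a one-line consequence of $\mu=\gamma^2\lambda^2$ and the definition of the Floquet spectrum.
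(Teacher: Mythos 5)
Your overall strategy --- choose $\mu_*$ in an open spectral gap on the negative $\mu$-axis, use the band/gap characterization $|\Delta_q|\le 2$ versus $|\Delta_q|>2$, and invert $\mu=(\gamma\lambda)^2$ to land on $\lambda=\pm i\beta_*$ --- is exactly the intended one, and the second half of your argument (at such a $\mu_*$ the multipliers form a real reciprocal pair off the unit circle, so $\pm i\beta_*\notin\sigma\eqref{eq:q}$) is fine. The genuine gap is in how you locate a \emph{negative} open gap in the librational case. You assert that $\Sigma({\bf Q})$ has ``infinitely many spectral gaps accumulating at $-\infty$,'' but this is false for the potential at hand: as you yourself note, \eqref{eq:q} reduces to Lam\'e's equation with $\nu=1$, which is a \emph{one-gap} potential, so exactly one gap is open (all the non-strict inequalities between same-parity eigenvalues are equalities except one). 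For librational waves that unique open gap is $(\mu_2^{(0)},\mu_1^{(0)})$, and whether it lies in the negative half-line is precisely the content of Lemma~\ref{lem:mu1}: Haupt's theorem alone only tells you that $\mu=0$ equals either $\mu_1^{(0)}$ or $\mu_2^{(0)}$, and in the latter case the gap would be $(0,\mu_1^{(0)})\subset\R_{>0}$, leaving $\Sigma({\bf Q})\supseteq(-\infty,0]$ and \emph{no} negative $\mu_*$ with $|\Delta_q|>2$. So you must invoke Lemma~\ref{lem:mu1}, i.e.\ $\mu_1^{(0)}=0>\mu_2^{(0)}$, equivalently $d\Delta_q/d\mu<0$ at $\mu=0$ where $\Delta_q=2$; this immediately gives $\Delta_q(\mu)>2$ for all small $\mu<0$, and you may take $\mu_*$ there without even mentioning Lam\'e.

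A smaller wobble of the same kind occurs in the rotational case: the inequalities of \S\ref{sec:hillspec} between same-parity eigenvalues are non-strict, so they do \emph{not} by themselves ``guarantee at least one nonempty open gap''; a priori $\Sigma({\bf Q})$ could be all of $(-\infty,0]$. There you genuinely need the Lam\'e reduction (your ``more robust'' fallback) to know that the gap $(\mu_2^{(\pi)},\mu_1^{(\pi)})$ is open; since $\mu_0^{(0)}=0$ is the top of the spectrum for rotational waves, that gap then automatically lies below $0$. With these two points repaired, your proof is complete and coincides with the argument the paper intends when it calls the corollary a consequence of the preceding analysis.
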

\begin{remark} Actually, we have shown something stronger, namely the existence of an open interval on the negative real line where $|\Delta_q| >2$, and hence the existence of two intervals in $i\R$  not in $\Sigma \eqref{eq:q} $. This difference is not necessary to the proof of Theorem \ref{th:main} however, and we have formulated Corollary \ref{cor:mustar} in its current form as a matter of taste. 
\end{remark}

For $\mu$ inside the first spectral gap we have that $\left| \Delta_q \right|  > 2$.  Figure \ref{fig:delta} verifies that $\mu=0$ is the largest periodic eigenvalue in the rotational case, while it is the second largest in the librational case.
\begin{figure}[h]
\subfigure[rotational (subluminal)]{\includegraphics[scale=.63]{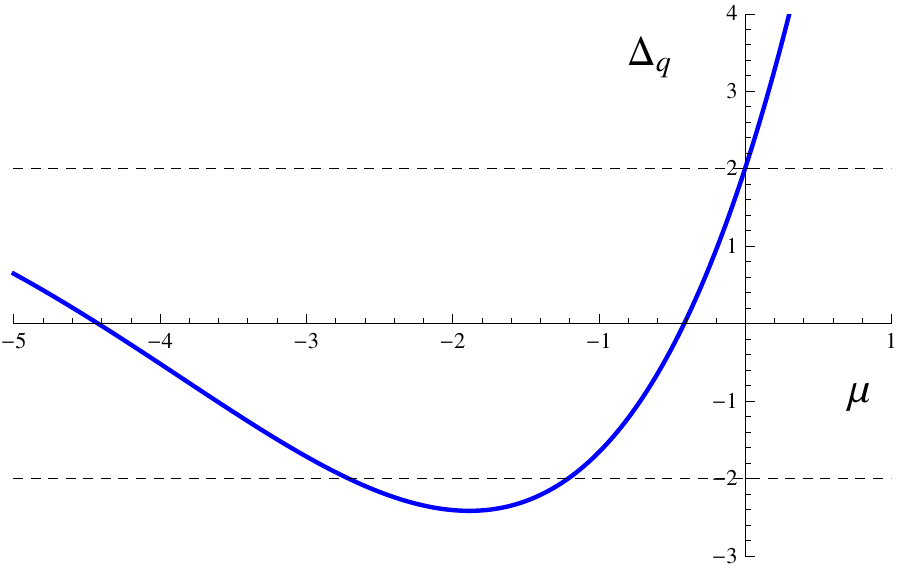}}
\subfigure[librational (superluminal)]{\includegraphics[scale=.7]{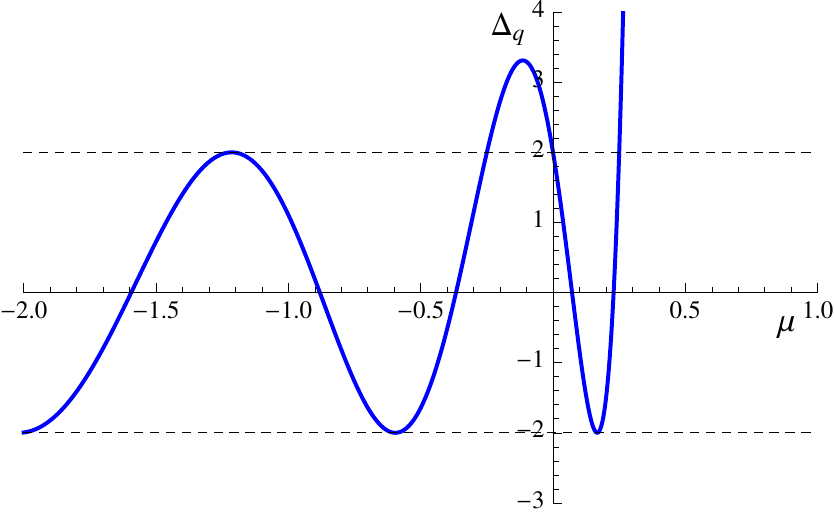}}
\caption{The graph of $\Delta_q$ for real values of $\mu$. As in Lemma \ref{lem:mu1} and equation \eqref{eq:deltaprimezero}, it can be seen that $\mu_1^{(0)} = 0$ and that $d\Delta_q/d\mu < 0$ at $\mu=0$ in the librational wave case.  Left panel:  $c=1/\sqrt{3}$, $E=4.92$.  Right panel:  $c=\sqrt{3}$, $E=1$.}\label{fig:delta}
\end{figure}


To finish off this section, we introduce another pair of functions which will prove useful in later stability calculations. 
Let $\rho_\pm$ (respectively $\eta_\pm$) denote the Floquet multipliers of \eqref{eq:p} (respectively of \eqref{eq:q}).
Define the maps $G_{p,q}:\C \to \R$ by
 \begin{equation}
 \begin{split}
 G_p(\lambda) & := \log|\rho_+(\lambda)|\log|\rho_-(\lambda)| \\
 G_q(\lambda) & := \log|\eta_+(\lambda)|\log|\eta_-(\lambda)|.
 \end{split}
 \label{eq:GpGqdef}
 \end{equation}
 \begin{lemma}[Important properties of $G_p$ and $G_q$]
 The functions $G_p:\mathbb{C}\to\mathbb{R}$ and $G_q:\mathbb{C}\to\mathbb{R}$ are continuous,  
 and $\lambda\in\sigmaP$ (respectively $\lambda\in\sigmaQ$) if and only if
 $G_p(\lambda)=0$ (respectively $G_q(\lambda)=0$).  Also, 
 \begin{equation}
 G_q(\lambda)\le 0,\quad\forall\lambda\in \C
 \label{eq:GqInequality}
 \end{equation}
 and 
 \begin{equation}
 G_p(\lambda)=(\Re(c\gamma\lambda T))^2 + G_q(\lambda),\quad \forall \lambda\in\C.
 \label{eq:GpGqIdentity}
 \end{equation}
 \label{lem:GpGq}
 \end{lemma}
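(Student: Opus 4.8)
The plan is to establish the four assertions in Lemma~\ref{lem:GpGq} in turn, leaning on the Floquet-multiplier machinery already set up. First, for \emph{continuity} of $G_p$ and $G_q$: the entries of the monodromy matrices $\M_p(\lambda)$ and $\M_q(\lambda)$ are entire functions of $\lambda$ (the coefficient matrices $\A_p,\A_q$ are entire in $\lambda$ and the fundamental solution matrix depends analytically on parameters), so $\Delta_p(\lambda), D_p(\lambda), \Delta_q(\lambda), D_q(\lambda)$ are entire. The Floquet multipliers $\rho_\pm(\lambda)$ (resp.\ $\eta_\pm(\lambda)$) are the roots of a monic quadratic with these continuous coefficients, hence are continuous as an \emph{unordered} pair; the symmetric function $\log|\rho_+|\log|\rho_-|$ is well-defined and continuous wherever neither multiplier vanishes, and since $\rho_+\rho_-=D_p(\lambda)=e^{2c\gamma\lambda T}\neq 0$ and $\eta_+\eta_-=D_q(\lambda)=1\neq 0$, no multiplier ever vanishes. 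So $G_p,G_q$ are continuous on all of $\C$.

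Next, the \emph{zero-set characterization}: by Proposition~\ref{prop:FloquetSpectrumP} (and Definition~\ref{def:tempeigenQ}), $\lambda\in\sigmaP$ iff $|\rho_+|=1$ or $|\rho_-|=1$, i.e.\ iff $\log|\rho_+|=0$ or $\log|\rho_-|=0$, i.e.\ iff $G_p(\lambda)=0$; identically for $G_q$. For the \emph{inequality} $G_q\le 0$: since $|\eta_+|\,|\eta_-|=|D_q(\lambda)|=1$, we have $\log|\eta_+|+\log|\eta_-|=0$, so $\log|\eta_-|=-\log|\eta_+|$ and therefore $G_q(\lambda)=-(\log|\eta_+(\lambda)|)^2\le 0$. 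Finally, the \emph{identity} \eqref{eq:GpGqIdentity}: by Lemma~\ref{lem:floquetpandq}, the multipliers are related by $\eta_\pm(\lambda)=e^{-c\gamma\lambda T}\rho_\pm(\lambda)$ (up to relabeling, which is harmless since $G$ is symmetric), so $\log|\eta_\pm(\lambda)|=\log|\rho_\pm(\lambda)|-\Re(c\gamma\lambda T)$. Writing $a_\pm:=\log|\rho_\pm(\lambda)|$ and $r:=\Re(c\gamma\lambda T)$, we get
\begin{equation}
G_q(\lambda)=(a_+-r)(a_--r)=a_+a_--r(a_++a_-)+r^2.
\end{equation}
But from $D_p(\lambda)=e^{2c\gamma\lambda T}$ we have $a_++a_-=\log|\rho_+\rho_-|=\log|e^{2c\gamma\lambda T}|=2r$, so the middle term is $-2r^2$ and hence $G_q(\lambda)=a_+a_--2r^2+r^2=G_p(\lambda)-r^2$, which rearranges to \eqref{eq:GpGqIdentity}.

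The only genuinely delicate point is the continuity claim, specifically justifying that a symmetric function of the (generically non-smooth) branches $\rho_\pm(\lambda)$ is continuous: the cleanest route is to note that $G_p$ and $G_q$ are expressible directly in terms of the entire coefficient functions without ever choosing a branch — one has $\log|\rho_\pm|=\tfrac12\log|\rho_\pm|^2$ and $|\rho_+|^2+|\rho_-|^2$, $|\rho_+|^2|\rho_-|^2=|D_p|^2$ are continuous, so $\{\log|\rho_+|,\log|\rho_-|\}$ is a continuous unordered pair of reals and any symmetric continuous function of it (here the product) is continuous. I expect this to be the main obstacle only in the sense of requiring a careful word; the algebra for the remaining three parts is completely routine given Lemma~\ref{lem:floquetpandq} and \eqref{eq:determinants}.
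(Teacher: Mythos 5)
Your proposal is correct and follows essentially the same route as the paper: continuity via the fact that $G_p,G_q$ are continuous symmetric functions of the (unordered pair of) roots of a monic quadratic with entire coefficients and nonvanishing determinant \eqref{eq:determinants}, the zero-set characterization from Proposition~\ref{prop:FloquetSpectrumP} and Definition~\ref{def:tempeigenQ}, the inequality from $\eta_+\eta_-=D_q=1$, and the identity from Lemma~\ref{lem:floquetpandq} together with $D_p(\lambda)=e^{2c\gamma\lambda T}$. Your explicit algebra for \eqref{eq:GpGqIdentity} and the branch-free continuity argument are just slightly more spelled-out versions of what the paper sketches.
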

 \begin{proof}
 As roots of the characteristic polynomial of a $2\times 2$ matrix with entries analytic in $\lambda$,
 the Floquet multipliers $\eta_\pm(\lambda)$ and $\rho_\pm(\lambda)$ are analytic  with the exception of square-root type branch points at the isolated zeros of the associated (entire) discriminant, and a system of branch cuts connecting them.  Moreover, the multipliers are well-defined precisely at each such branch point.  Upon crossing such a branch cut, the two multipliers are simply permuted, and hence any continuous symmetric function of the two multipliers can be continuously extended to the system of branch cuts.  
 It only remains to point out that the logarithms are continuous as the multipliers are necessarily nonzero due to \eqref{eq:determinants}.
The statement that the functions $G_p$ and $G_q$ detect the Floquet spectra of \eqref{eq:p} and \eqref{eq:q} respectively follows directly from Proposition~\ref{prop:FloquetSpectrumP} and Definition~\ref{def:tempeigenQ}.

The inequality \eqref{eq:GqInequality} follows from \eqref{eq:determinants}, which implies that the Floquet multipliers of \eqref{eq:q} are reciprocals of each other.  The identity \eqref{eq:GpGqIdentity} also follows from  the reciprocal nature of the Floquet multipliers of \eqref{eq:q} together with  Lemma~\ref{lem:floquetpandq}.
 \end{proof}

We conclude this section by deducing the sign of $G_p(\lambda)$ at certain special points in the complex plane.
\begin{lemma}\label{lem:neggp}For each periodic traveling wave there exists $\beta_*>0$ such that $G_p(\pm i\beta_*)<0$.  
\end{lemma}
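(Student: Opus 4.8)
The plan is to exploit the identity \eqref{eq:GpGqIdentity} relating $G_p$ and $G_q$, which collapses on the imaginary axis. For $\lambda=\pm i\beta$ with $\beta\in\R$, the quantity $c\gamma\lambda T=\pm i\,c\gamma\beta T$ is purely imaginary, since $c$, $\gamma$ and $T$ are all real; hence $\Re(c\gamma\lambda T)=0$ and \eqref{eq:GpGqIdentity} reduces to $G_p(\pm i\beta)=G_q(\pm i\beta)$. Thus it suffices to exhibit a single $\beta_*>0$ with $G_q(\pm i\beta_*)<0$.

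First I would invoke Corollary \ref{cor:mustar}, which provides exactly a point $\mu_*<0$ with $|\Delta_q|>2$ at $\mu=\mu_*$, together with nonzero imaginary points $\lambda=\pm i\beta_*$, $\mu_*=(i\gamma\beta_*)^2$, lying outside $\sigmaQ$. By the characterization in Lemma \ref{lem:GpGq}, $\lambda\notin\sigmaQ$ forces $G_q(\lambda)\neq 0$, and combined with the inequality \eqref{eq:GqInequality}, namely $G_q\le 0$ everywhere, this gives $G_q(\pm i\beta_*)<0$. Feeding this into the reduction of the previous paragraph yields $G_p(\pm i\beta_*)=G_q(\pm i\beta_*)<0$, as claimed. (If one only reads the existence of $+i\beta_*\notin\sigmaQ$ out of Corollary \ref{cor:mustar}, the statement for $-i\beta_*$ follows either from the conjugation symmetry of $\sigmaQ$, a consequence of $\cos(f(z))$ being real, or directly from $G_q$ being a continuous symmetric function of the Floquet multipliers.)

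I do not expect any real obstacle here: the substantive work — showing that the negative real $\mu$-axis contains a point, indeed an interval, where $|\Delta_q|>2$, i.e., that Hill's equation \eqref{eq:q} has a genuine spectral gap below $\mu=0$ — is already carried out in Lemma \ref{lem:mu1} and Corollary \ref{cor:mustar} via the single-gap Lam\'e structure and the sign of $d\Delta_q/d\mu$ at $\mu=0$. The present lemma is then a bookkeeping consequence of the $G_p$–$G_q$ identity and the sign constraints on $G_q$. The only points worth a line of verification are the trivial ones: that $c\gamma\beta_* T\in\R$ so that the cross term in \eqref{eq:GpGqIdentity} vanishes on the imaginary axis, and that $\beta_*$ may be taken strictly positive, which is guaranteed by $\mu_*<0$.
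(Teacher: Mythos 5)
Your argument is correct and is essentially identical to the paper's proof: both reduce to $G_p(\pm i\beta_*)=G_q(\pm i\beta_*)$ via \eqref{eq:GpGqIdentity}, then combine Corollary~\ref{cor:mustar} with Lemma~\ref{lem:GpGq} and the inequality \eqref{eq:GqInequality} to conclude $G_q(\pm i\beta_*)<0$. The extra remarks about the cross term vanishing on the imaginary axis and the $-i\beta_*$ case are fine but not needed beyond what the paper already records.
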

\begin{proof} Let $\beta_*>0$ be as in the statement of Corollary~\ref{cor:mustar}.
Since $\lambda=i\beta_*$ is imaginary, it follows from \eqref{eq:GpGqIdentity}
that $G_p(i\beta_*)=G_q(i\beta_*)$.  But according to Corollary~\ref{cor:mustar}, $i\beta_*\not\in\sigmaQ$ and hence from Lemma~\ref{lem:GpGq} we have $G_q(i\beta_*)\neq 0$.  But from 
\eqref{eq:GqInequality} it follows that in fact $G_q(i\beta_*)<0$ and hence $G_p(i\beta_*)=G_q(i\beta_*)<0$ as desired.
\end{proof}
\begin{lemma}\label{lem:posgp} For each librational traveling wave there exists $\alpha_*\in\sigmaQ$ real and positive, and if in addition $c\neq 0$, then
$G_p(\alpha_*) > 0$ for all such $\alpha_*$. 
\end{lemma}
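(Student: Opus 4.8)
The plan is to produce a concrete real positive point of $\sigmaQ$ directly from the band structure of the $\mu$-Floquet spectrum of Hill's equation, and then to read off the sign of $G_p$ there from the identity \eqref{eq:GpGqIdentity}. So there are really two independent steps: an existence step for $\alpha_*$, which uses only the librational hypothesis, and a sign step, which uses only $c\neq 0$.

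First I would establish existence. For a librational wave $f$, the function $f'(z)$ has exactly two zeros per period, so by oscillation theory $\mu=0$ is one of the two lowest-indexed periodic eigenvalues, i.e.\ $\mu_1^{(0)}=0$ or $\mu_2^{(0)}=0$. In either case the ordering $\cdots<\mu_2^{(0)}\le\mu_1^{(0)}<\mu_2^{(\pi)}\le\mu_1^{(\pi)}<\mu_0^{(0)}$ forces $\mu_0^{(0)}>0$, so the top band $[\mu_1^{(\pi)},\mu_0^{(0)}]$ of $\Sigma({\bf Q})$ is a nondegenerate interval lying in the open positive half-line. Taking $\mu_\star:=\mu_0^{(0)}>0$ and invoking the relation $\gamma^2\sigmaQ^2=\Sigma({\bf Q})$ (equivalently $\lambda=\pm|\gamma|^{-1}\mu^{1/2}$ on the $\lambda$-side), the number $\alpha_*:=|\gamma|^{-1}\mu_\star^{1/2}$ is real, positive, and lies in $\sigmaQ$. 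The same relation shows that \emph{every} real positive element of $\sigmaQ$ comes from a positive $\mu\in\Sigma({\bf Q})$ in this way, so it suffices to treat an arbitrary such $\alpha_*$ in the next step.

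Next I would compute the sign of $G_p(\alpha_*)$. Since $\alpha_*\in\sigmaQ$, Lemma~\ref{lem:GpGq} gives $G_q(\alpha_*)=0$. As $c$, $\gamma$, $\alpha_*$ and $T$ are all real, $\Re(c\gamma\alpha_* T)=c\gamma\alpha_* T$, and this is nonzero precisely because $c\neq 0$ (recall $\gamma\neq 0$ always, $\alpha_*>0$, and $T>0$). Substituting into \eqref{eq:GpGqIdentity} then yields
\begin{equation*}
G_p(\alpha_*)=\bigl(\Re(c\gamma\alpha_* T)\bigr)^2+G_q(\alpha_*)=(c\gamma\alpha_* T)^2>0,
\end{equation*}
which is the claim.

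I do not anticipate a serious obstacle: the argument is essentially an assembly of already-established facts (the structure of $\Sigma({\bf Q})$, Lemma~\ref{lem:GpGq}, and the identity \eqref{eq:GpGqIdentity}). The one place that genuinely uses the librational hypothesis is the strict positivity $\mu_0^{(0)}>0$, which fails for rotational waves (where $\Sigma({\bf Q})$ lies in the nonpositive half-line); conversely the hypothesis $c\neq 0$ enters only to guarantee that the explicit factor $c\gamma\alpha_* T$ does not vanish. Note that Lemma~\ref{lem:mu1} is not needed for this statement — it merely pins down \emph{which} of $\mu_1^{(0)},\mu_2^{(0)}$ equals $0$ — so the proof should remain short and self-contained.
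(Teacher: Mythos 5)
Your proposal is correct and follows essentially the same route as the paper: existence of $\alpha_*$ from $\mu_0^{(0)}>0$ (a consequence of the band ordering and the fact that $\mu=0$ is $\mu_1^{(0)}$ or $\mu_2^{(0)}$ for librational waves) together with $\mu=(\gamma\lambda)^2$, and then the sign from \eqref{eq:GpGqIdentity} with $G_q(\alpha_*)=0$ and $c\neq 0$. Your added remark that every positive real point of $\sigmaQ$ is covered, and that Lemma~\ref{lem:mu1} is not needed here, is accurate but does not change the argument.
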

\begin{proof} The fact that there exist real and strictly positive points $\alpha_*$ in the Floquet spectrum
of \eqref{eq:q} for librational waves is a consequence of $\mu_0^{(0)}>0$ and the relation $\mu=(\gamma\lambda)^2$.  To see that $G_p(\alpha_*)>0$, apply  \eqref{eq:GpGqIdentity} with the observation that $G_q(\alpha_*) = 0$ and using $c\neq 0$. 
\end{proof}
%
\begin{lemma}\label{lem:largegp}
Let $\Re(\lambda)$ be sufficiently large in magnitude.  Then $\sgn(G_p(\lambda))=\sgn(\gamma)$, i.e.\ $G_p(\lambda)>0$ for superluminal waves and $G_p(\lambda)<0$ for subluminal waves.
\end{lemma}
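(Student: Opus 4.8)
The plan is to read off the leading asymptotics of $G_p(\lambda)$ from the identity \eqref{eq:GpGqIdentity} of Lemma~\ref{lem:GpGq}. First I would observe that, because $D_q(\lambda)=1$ by \eqref{eq:determinants}, the two Floquet multipliers $\eta_\pm(\lambda)$ of \eqref{eq:q} are reciprocal, so $\log|\eta_+(\lambda)|=-\log|\eta_-(\lambda)|$ and hence $G_q(\lambda)=-(\log|\eta_+(\lambda)|)^2$, where $\eta_+$ denotes the multiplier of larger modulus (the choice is immaterial). By \eqref{eq:GpGqIdentity} and the fact that $c$, $\gamma$, $T$ are real, $G_p(\lambda)=(c\gamma T)^2\Re(\lambda)^2-(\log|\eta_+(\lambda)|)^2$, so the whole statement reduces to the single estimate
\[
\log|\eta_+(\lambda)|=|\gamma|\,T\,|\Re(\lambda)|+o(1)\qquad\text{as }|\Re(\lambda)|\to\infty.
\]

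To establish this I would invoke the classical uniform large-parameter asymptotics for the discriminant of Hill's equation (see \cite{MW66}). Writing \eqref{eq:q} in standard form $q''+(\tilde\mu-Q(z))q=0$ with $\tilde\mu=-\mu=-\gamma^2\lambda^2$ and $Q(z)=-\gamma\cos(f(z))$ of period $T$, these asymptotics give $\Delta_q(\lambda)=2\cosh(\gamma\lambda T)+O\!\big(|\gamma\lambda|^{-1}e^{T|\gamma||\Re(\lambda)|}\big)$ as $|\mu|\to\infty$, uniformly in $\arg\mu$; this applies whenever $|\Re(\lambda)|\to\infty$ since $|\mu|=\gamma^2|\lambda|^2\ge\gamma^2\Re(\lambda)^2$. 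Because $|2\cosh(\gamma\lambda T)|=e^{T|\gamma||\Re(\lambda)|}(1+O(e^{-2T|\gamma||\Re(\lambda)|}))$ and the error term above is smaller by a factor $O(1/|\Re(\lambda)|)$, one gets $|\Delta_q(\lambda)|=e^{T|\gamma||\Re(\lambda)|}(1+O(1/|\Re(\lambda)|))$. Finally, since $\eta_\pm$ solve $\eta^2-\Delta_q\eta+1=0$ with $\eta_+\eta_-=1$, the larger root satisfies $\big||\eta_+|-|\Delta_q|\big|\le|\eta_-|\le 1$, and taking logarithms yields the displayed estimate.

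Substituting back, $G_q(\lambda)=-\gamma^2T^2\Re(\lambda)^2+o(|\Re(\lambda)|)$, hence
\[
G_p(\lambda)=(c\gamma T)^2\Re(\lambda)^2-\gamma^2T^2\Re(\lambda)^2+o(|\Re(\lambda)|)=\gamma^2(c^2-1)T^2\Re(\lambda)^2+o(|\Re(\lambda)|).
\]
By the definition \eqref{eq:defgamma} of $\gamma$ we have $\gamma^2(c^2-1)=\gamma$, so $G_p(\lambda)=\gamma T^2\Re(\lambda)^2+o(|\Re(\lambda)|)$. For $|\Re(\lambda)|$ sufficiently large the quadratic term dominates, giving $\sgn(G_p(\lambda))=\sgn(\gamma)$, which is positive for superluminal waves ($c^2>1$) and negative for subluminal waves ($c^2<1$), as claimed. (Note $c=0$ is allowed: then $G_p(\lambda)=G_q(\lambda)<0$ for large $|\Re(\lambda)|$, consistent with $\gamma<0$.)

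The only nontrivial ingredient is the estimate on $\Delta_q$, and the point that requires care is uniformity: as $|\Re(\lambda)|\to\infty$ with $\Im(\lambda)$ unrestricted, the parameter $\mu=(\gamma\lambda)^2$ sweeps out a parabolic region and, when $|\Im(\lambda)|\gtrsim|\Re(\lambda)|$, approaches the negative real $\mu$-axis, so one needs the discriminant asymptotics to hold uniformly as $\mu\to\infty$ throughout the plane rather than merely along the real axis. The classical uniform estimates cover this; alternatively, and perhaps more transparently, one can derive the needed bound on $\M_q(\lambda)$ directly by casting the fundamental solution matrix as a Volterra integral equation and comparing it, via Gronwall's inequality, with the monodromy matrix of the constant-coefficient problem $q''=\mu q$.
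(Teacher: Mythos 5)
Your proof is correct, and it reaches the same leading-order asymptotic $G_p(\lambda)=\gamma T^2\Re(\lambda)^2+o(\Re(\lambda)^2)$ as the paper, but by a somewhat different route. The paper works directly with equation \eqref{eq:p}: it asserts (without detailed justification beyond noting that $\cos(f(z))$ is negligible against $\lambda^2$) that $\log(\rho_\pm(\lambda))=\gamma\lambda T(c\pm 1)+O(1)$ and then multiplies the two real parts, using $\gamma^2(c^2-1)=\gamma$. You instead route the computation through the identity \eqref{eq:GpGqIdentity}, which isolates the exact term $(\Re(c\gamma\lambda T))^2$ and reduces everything to the large-$|\mu|$ behavior of the Hill discriminant $\Delta_q$; via Lemma~\ref{lem:floquetpandq} your estimate $\log|\eta_+(\lambda)|=|\gamma|T|\Re(\lambda)|+o(1)$ is equivalent to the paper's asymptotics for $\log\rho_\pm$, so the analytic core is the same constant-coefficient comparison in both cases. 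What your version buys is that the one nontrivial input is pinned down as a single classical, citable estimate for $\Delta_q$, and you correctly flag the point the paper glosses over with ``it can be shown that'': the estimate must hold uniformly in $\arg\mu$ (equivalently, for $\Im(\lambda)$ unrestricted), which the Volterra/Gronwall comparison with $q''=\mu q$ does deliver. Your bookkeeping is sound throughout ($G_q=-(\log|\eta_+|)^2$ from $D_q\equiv 1$, the root bound $\bigl||\eta_+|-|\Delta_q|\bigr|\le|\eta_-|\le 1$, and the algebra $\gamma^2(c^2-1)=\gamma$), and the remark covering $c=0$ is a nice touch, although for the purposes of Theorem~\ref{th:main} the lemma is only invoked for superluminal rotational waves, where $c^2>1$.
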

\begin{proof} 
The coefficient $\cos(f(z))$ is evidently negligible compared with $\lambda^2$ in equation \eqref{eq:p},
and it can be shown that the Floquet multipliers of \eqref{eq:p} satisfy
\begin{equation}
\log(\rho_\pm(\lambda))=\gamma\lambda T(c\pm 1)+ O(1),\quad\lambda\to\infty.
\label{eq:logrhoasymp}
\end{equation}
Therefore, directly from the definition \eqref{eq:GpGqdef} of $G_p(\lambda)$ we see that
\begin{equation}
\begin{split}
G_p(\lambda)&=\left[\gamma\Re(\lambda) T(c+1) + O(1)\right]\left[\gamma\Re(\lambda)T(c-1)+O(1)\right] \\ &= \gamma\Re(\lambda)^2T^2 + O(\Re(\lambda)),\quad\lambda\to\infty,
\end{split}
\end{equation}
where we have used the definition \eqref{eq:defgamma} of $\gamma$.  The leading term dominates for $\Re(\lambda)$ sufficiently large, and therefore the proof is complete.
\end{proof}

\begin{remark} The function $G_{p}$ is useful not only in studying spectral stability of periodic traveling waves, but also in numerically finding the curves of spectrum. Indeed Figure~\ref{fig:spec} was made by finding the zero locus of the function $G_p(\lambda)$ with the appropriate parameters. In this way, $G_p$ is a \emph{non-analytic} but continuous analogue of an Evans function  capturing in one fell swoop the temporal eigenvalues for  \emph{all} real values of the phase $\theta$ of the (unimodular) Floquet multiplier $\rho$.  This should be contrasted with the so-called
periodic Evans function $F(\lambda;\theta)$ \cite{Grd1}.  For each $\theta\in\R$, $F(\lambda;\theta)$ is an analytic function of $\lambda$ whose (isolated) zeros are those temporal eigenvalues for which there exists a unimodular Floquet multiplier $\rho$ with phase $\theta$.
\end{remark}

\section{Stability and Instability of Periodic Traveling Waves}
\label{sec:stability_instability}
We are now ready to determine the stability and instability of the four types of periodic traveling waves. The proof of Theorem~\ref{th:main} follows from Lemma~\ref{lem:sub_rot}, Lemma~\ref{lem:librationalunstable}, and Lemma~\ref{lem:superrotunstable} that we formulate and prove in this section.

\subsection{Stability}
\begin{lemma}[Spectral stability of subluminal rotational waves]\label{lem:sub_rot} Let $f(z)$ be a subluminal rotational wave.   Then $\sigmaP$ is purely imaginary.
\end{lemma}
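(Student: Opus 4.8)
The plan is to show that for subluminal rotational waves, no point $\lambda$ with $\Re(\lambda) \neq 0$ can belong to $\sigmaP$, using the identity \eqref{eq:GpGqIdentity} together with the sign information about the spectrum of Hill's equation \eqref{eq:q} in the rotational case. Recall from \S\ref{sec:hillspec} that for rotational $f$, the $\mu$-Floquet spectrum $\Sigma({\bf Q})$ lies entirely in the closed negative half-line $(-\infty,0]$, with $\mu_0^{(0)}=0$ the top of the spectrum. The spectral parameter relation is $\mu = \gamma^2\lambda^2 = (\lambda/(c^2-1))^2$, and in the subluminal case $\gamma = 1/(c^2-1) < 0$.

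First I would use the identity \eqref{eq:GpGqIdentity}: for any $\lambda\in\C$,
\begin{equation}
G_p(\lambda) = (\Re(c\gamma\lambda T))^2 + G_q(\lambda).
\end{equation}
Since $G_p$ detects $\sigmaP$ (it vanishes exactly on $\sigmaP$, by Lemma~\ref{lem:GpGq}) and $G_q(\lambda)\le 0$ everywhere, a point $\lambda\in\sigmaP$ must satisfy both $(\Re(c\gamma\lambda T))^2 = -G_q(\lambda) \ge 0$ and $G_q(\lambda) \le 0$, so in fact $G_q(\lambda) = -(\Re(c\gamma\lambda T))^2$. Now suppose toward a contradiction that $\lambda = a+ib \in \sigmaP$ with $a = \Re(\lambda)\neq 0$; I may also assume $c\neq 0$ (the case $c=0$ gives a stationary wave and must be handled separately, or one notes rotational waves require $c\neq 0$ anyway since $E<0$ forces $c^2<1$ strictly and the standing wave $c=0$ is a degenerate endpoint — I would check this carefully). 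Then $\Re(c\gamma\lambda T) = c\gamma T a \neq 0$, so we need $G_q(\lambda) < 0$, i.e. $\lambda\notin\sigmaQ$, which is automatic by Lemma~\ref{lem:pandqimag} since $\lambda$ is not purely imaginary. So this alone is not yet a contradiction; I need to extract more.

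The key additional input is to locate $\mu = \gamma^2\lambda^2$ relative to $\Sigma({\bf Q})$ and estimate $G_q$ quantitatively. Write $\lambda = a+ib$; then $\gamma^2\lambda^2 = \gamma^2(a^2 - b^2 + 2iab)$. The Floquet multipliers $\eta_\pm$ of \eqref{eq:q} satisfy $\eta_+\eta_- = 1$, so writing $\eta_\pm = e^{\pm\nu}$ for some $\nu = \nu(\lambda)\in\C$ (the Floquet exponent), we get $G_q(\lambda) = \Re(\nu)\cdot\Re(-\nu) = -(\Re\nu)^2$. Hence the condition $\lambda\in\sigmaP$ becomes precisely
\begin{equation}
(\Re\nu(\lambda))^2 = (c\gamma T)^2 (\Re\lambda)^2,
\end{equation}
i.e. $\Re\nu(\lambda) = \pm c\gamma T\,\Re(\lambda)$. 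The plan is then to compare $\nu(\lambda)$, the Floquet exponent of Hill's equation at spectral parameter $\mu = \gamma^2\lambda^2$, against the linear-in-$\lambda$ bound on the right. The crucial fact for \emph{rotational} waves is that $\mu = 0$ is the \emph{supremum} of $\Sigma({\bf Q})$ and $\Sigma({\bf Q})\subset(-\infty,0]$; this means that for $\mu$ in the complex plane off $(-\infty,0]$, the Floquet exponent $\nu$ — which is the analytic continuation of $\cosh^{-1}(\Delta_q/2)$ — has controlled growth, and in particular one can show $|\Re\nu(\gamma^2\lambda^2)| < |c\gamma T\,\Re(\lambda)|$ strictly whenever $\Re(\lambda)\neq 0$. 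Concretely, I would use that $\Delta_q(\mu)$ for the Hill operator grows like $2\cosh(T\sqrt{-\mu})$ for large negative $\mu$ (and more precisely that $\nu$ behaves like $T\sqrt{-\mu} = T\sqrt{-\gamma^2\lambda^2} = |\gamma|T\sqrt{-\lambda^2}$, whose real part is at most $|\gamma|T\,|\Re\lambda|$ since $\Re\sqrt{-\lambda^2}\le|\Re\lambda|$ — with equality only when $\lambda$ is real). Since $|c| < 1 < $ well, actually $|c|<1$ in the subluminal case, we get $|c\gamma T\,\Re\lambda| < |\gamma|T\,|\Re\lambda|$... wait, that inequality goes the wrong way, so the honest argument must be more delicate: one compares $\Re\nu$ directly, exploiting that since $\Sigma({\bf Q})\subset(-\infty,0]$ the function $\mu\mapsto\nu$ is, along the relevant curves, bounded by its value computed with $\cos(f)$ replaced by its average or by a comparison Hill operator.

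I expect the main obstacle to be precisely this quantitative comparison: turning "the $\mu$-spectrum of Hill's equation lies in $(-\infty,0]$ for rotational waves" into the strict inequality $(\Re\nu(\gamma^2\lambda^2))^2 < (c\gamma T\Re\lambda)^2$ for all $\lambda$ with $\Re(\lambda)\neq 0$. A cleaner route, which I would pursue in preference, is to avoid estimating $\nu$ and instead argue by a \emph{connectedness/continuity} argument on the set $\{G_p = 0\}$: Lemma~\ref{lem:largegp} gives $G_p(\lambda)<0$ for $\Re(\lambda)$ large (subluminal), Lemma~\ref{lem:neggp} gives $G_p(\pm i\beta_*)<0$ at certain imaginary points, and one knows $G_p\ge 0$ is impossible off the imaginary axis unless... — the idea being to show $G_p(\lambda) < 0$ for \emph{every} $\lambda$ with $\Re(\lambda)\neq 0$ in the subluminal rotational case, so that $\{G_p=0\}\subseteq i\R$. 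Indeed, from \eqref{eq:GpGqIdentity}, $G_p(\lambda) < 0$ iff $(\Re(c\gamma\lambda T))^2 < -G_q(\lambda) = (\Re\nu)^2$, so the whole problem reduces to the single inequality $|c\,\Re\lambda| < |\Re\nu(\gamma^2\lambda^2)|/(|\gamma|T)$ for $\Re\lambda\neq 0$; I would establish this by noting that for rotational waves $\nu$ extends analytically across $\mu\in(-\infty,0]$ being the top of the spectrum, writing $\mu = \gamma^2\lambda^2$ and tracking $\Re\nu$ as $\lambda$ moves along rays, using the asymptotics \eqref{eq:logrhoasymp} specialized via $\nu = \log\rho_\pm - c\gamma\lambda T$ (from Lemma~\ref{lem:floquetpandq}) to pin down $\Re\nu = \gamma T\Re\lambda(c\pm 1) - c\gamma T\Re\lambda = \pm\gamma T\Re\lambda$ asymptotically, and then a monotonicity or maximum-principle argument (since $\nu$ is harmonic-type off the cut) to propagate the strict inequality $|\Re\nu| > |c\gamma T\Re\lambda|$ from infinity and from the imaginary axis (where $\Re\nu$ can be nonzero by Corollary~\ref{cor:mustar}) into the whole open half-planes. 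Verifying that no zero of $G_p$ escapes to the interior — ruling out the possibility that the inequality reverses somewhere — is the delicate point, and I would handle it by exploiting that for rotational $f$ the Hill spectrum has a \emph{single gap} (the Lamé reduction in the Appendix), which makes $\nu(\mu)$ completely explicit in terms of elliptic functions and reduces the inequality to an identity one can check directly.
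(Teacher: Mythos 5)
Your proposal reduces the lemma, via the identity \eqref{eq:GpGqIdentity}, to the strict pointwise inequality $(\Re(c\gamma\lambda T))^2 < -G_q(\lambda)$ for every $\lambda$ with $\Re(\lambda)\neq 0$, but you never establish that inequality, and that is precisely where the whole difficulty lives. You notice yourself that your first asymptotic comparison ``goes the wrong way,'' and the replacement strategies you sketch do not close the gap: $\Re\nu$ is harmonic off the branch cuts, but $(\Re(c\gamma\lambda T))^2$ is only subharmonic, so $G_p$ is a difference of subharmonic functions and no maximum principle propagates the desired sign from the imaginary axis and from infinity into the open half-planes; likewise the single-gap Lam\'e structure locates $\Sigma({\bf Q})$ on the real $\mu$-line but does not by itself control $\Re\nu(\gamma^2\lambda^2)$ at complex $\mu$ off the cut in the quantitative way you need. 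The $G_p$/Intermediate Value Theorem machinery is the paper's tool for \emph{producing} unstable eigenvalues (Lemmas \ref{lem:librationalunstable} and \ref{lem:superrotunstable}); it is ill-suited to proving the \emph{absence} of spectrum off the axis.

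The paper's proof goes by a different and much more elementary route that avoids Floquet exponents at complex $\mu$ altogether. Writing \eqref{eq:p} as $(c^2-1)Lp-2c\lambda p'+\lambda^2p=0$ with the $\theta$-periodic boundary condition and pairing with $p^*$ over one period gives the scalar quadratic $(c^2-1)\langle p,Lp\rangle-2im\lambda+\|p\|^2\lambda^2=0$ with $m\in\R$ (reality of $m$ follows by integration by parts). Solving for $\lambda$ as in \eqref{eq:quadraticsoln}, the quantity under the square root, $-m^2-(c^2-1)\|p\|^2\langle p,Lp\rangle$, is nonpositive exactly because $c^2<1$ and $L$ is negative semidefinite for rotational waves ($\Sigma_\theta({\bf Q})\subset\R_-$), so every temporal eigenvalue is purely imaginary. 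If you wish to salvage your route you would have to prove the inequality $(\Re\nu(\gamma^2\lambda^2))^2>(c\gamma T\Re\lambda)^2$ for $\Re\lambda\neq 0$ directly, which requires genuinely new input beyond the facts about $\Sigma({\bf Q})$ recorded in the paper; as written, the proposal is a program rather than a proof.
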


\begin{proof} Recall the Hill's operator $L$ defined by \eqref{eq:hillop} in \S\ref{sec:hillspec},
and the corresponding Hill's equation eigenvalue problem \eqref{eq:HillEV} parametrized
by $\theta\in\mathbb{R}$.  
The eigenvalue problem \eqref{eq:HillEV} is selfadjoint for all real $\theta$ and since the resolvent is compact (Green's function $g_\theta(z,\xi)=g_\theta(\xi,z)^*$ is a continuous and hence Hilbert-Schmidt kernel on $[0,T]^2$), it follows from the spectral theorem that for each $\theta\in\mathbb{R}$ the  eigenfunctions associated with points $\mu\in\Sigma_\theta({\bf Q})$ form an orthonormal basis of $L^2(0,T)$.  Moreover the corresponding generalized Fourier expansion of a smooth function $w(z)$ satisfying the condition $w(z+T)=e^{i\theta}w(z)$ is uniformly convergent.  From these facts it follows that
\begin{equation}
\langle w,Lw\rangle\le \|w\|^2\max\Sigma_\theta({\bf Q})\quad\forall w\in C^{(2)}(\mathbb{R}),\quad w(z+T)=e^{i\theta}w(z),
\label{eq:generalspectralbound}
\end{equation}
where
\begin{equation}
\langle w,Lw\rangle:=\int_0^T w(z)^*Lw(z)\,dz\quad\text{and}\quad\|w\|^2:=\int_0^T |w(z)|^2\,dz\ge 0.
\end{equation}
It follows from \eqref{eq:generalspectralbound} that for rotational $f$,
\begin{equation}
\langle w,Lw\rangle\le 0\quad\forall w\in C^{(2)}(\mathbb{R}),\quad w(z+T)=e^{i\theta}w(z),
\label{eq:Lposdef}
\end{equation}
because $\Sigma_\theta({\bf Q})\subset\mathbb{R}_-$.
That is, $L$ is negative semidefinite in this case (it is strictly negative definite for $\theta\neq 0\pmod{2\pi}$).  On the other hand, for librational $f$, $L$ is indefinite.

We re-write the equation \eqref{eq:p} in terms of Hill's operator $L$ as 
\begin{equation}
(c^2-1)Lp(z)-2c\lambda\frac{dp}{dz}(z) +\lambda^2 p(z)=0,
\label{eq:oureqn}
\end{equation}
\noindent and thus bounded on $\R$ solutions will satisfy 
\begin{equation}
p(z+T)=e^{i\theta}p(z), \;\;\text{for $\theta\in \R$.}
\label{eq:ourbc}
\end{equation}

For a given $\theta\in\R,$ we define $\sigma_\theta({\bf P}) \subset \mathbb{C}$ to be the set of complex $\lambda$ for which there exists a nontrivial solution to \eqref{eq:oureqn} satisfying the boundary condition \eqref{eq:ourbc}.  The Floquet spectrum of \eqref{eq:p} is the union over $\theta$ of these sets:
\begin{equation}
\sigma({\bf P})=\bigcup_{-\pi<\theta\le\pi}\sigma_\theta({\bf P}).
\end{equation}
  
Suppose $\lambda\in\sigma({\bf P})$.  Then there exists $\theta\in\mathbb{R},$ such that in fact $\lambda \in \sigma_\theta({\bf P})$.  Let $p\in C^{(2)}(\mathbb{R})$ denote the corresponding eigenfunction satisfying \eqref{eq:oureqn} and \eqref{eq:ourbc}.  Then,
multiplying the differential equation through by $p(z)^*$ and integrating over the fundamental period $[0,T]$ gives
\begin{equation}
(c^2-1)\langle p,Lp\rangle -2im\lambda+\|p\|^2\lambda^2=0,
\label{eq:lambdaquadratic}
\end{equation}
where
\begin{equation}
m:=-ic\int_0^T p(z)^*\frac{dp}{dz}(z)\,dz\in\mathbb{R}.
\end{equation}
That $m$ is real follows by integration by parts
using the $\theta$-periodicity condition satisfied by $p$.
The relation \eqref{eq:lambdaquadratic} can be viewed as a quadratic equation for $\lambda$. Expressing $\lambda$ in terms of $\langle p,Lp\rangle$, $m$, and $\|p\|^2$, we have:
\begin{equation}
\lambda=\frac{1}{\|p\|^2}\left[im \pm\sqrt{-m^2-(c^2-1)\|p\|^2\langle p,Lp\rangle}\right],
\label{eq:quadraticsoln}
\end{equation}
and given the reality of $m$ these values are clearly purely imaginary as long as $c^2<1$ (the subluminal case) and $f$ is a rotational wave (implying the negative semidefiniteness condition $\langle p,Lp\rangle\le 0$ according to \eqref{eq:Lposdef}). 
\end{proof}

\subsection{Instability}
Instability of the sub- and superluminal librational waves and of the superluminal rotational waves   is, in each case, a consequence of the continuity of the function $G_p$ defined by \eqref{eq:GpGqdef} and the Intermediate Value Theorem.  
\begin{lemma}[Spectral instability of librational waves]
Let $f(z)$ be a librational wave.  Then there exists a temporal eigenvalue $\lambda_*\in\sigmaP$ with $\Re(\lambda_*)>0$.
\label{lem:librationalunstable}
\end{lemma}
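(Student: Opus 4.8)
The plan is to use the continuity of $G_p$ together with the Intermediate Value Theorem, locating a purely imaginary point where $G_p<0$ and a real positive point where $G_p>0$, and concluding that the zero set of $G_p$ (which is exactly $\sigma(\mathbf{P})$ by Lemma~\ref{lem:GpGq}) meets the open right half-plane. Concretely, I would first invoke Lemma~\ref{lem:neggp} to produce $\beta_*>0$ with $G_p(i\beta_*)<0$, and Lemma~\ref{lem:posgp} to produce a real $\alpha_*>0$ with $G_p(\alpha_*)>0$ — the latter requiring $c\neq 0$. This handles the generic case $c\neq 0$.

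Next I would connect these two points by a path in the closed first quadrant and apply the Intermediate Value Theorem to $G_p$ restricted to that path: by continuity of $G_p$ (Lemma~\ref{lem:GpGq}) there is a point $\lambda_\star$ on the path with $G_p(\lambda_\star)=0$, hence $\lambda_\star\in\sigma(\mathbf{P})$. The point $\lambda_\star$ cannot be the endpoint $i\beta_*$ (there $G_p<0$) nor $\alpha_*$ (there $G_p>0$), so a priori it lies in the closed quadrant minus those two points; to force $\Re(\lambda_\star)>0$ I should choose the connecting path to run along, say, the segment of the imaginary axis from $i\beta_*$ to some point, then leave the imaginary axis — but a cleaner route is to run the path so that it touches the imaginary axis only at $i\beta_*$ and otherwise stays in the open right half-plane, e.g. a straight segment from $i\beta_*$ to $\alpha_*$, whose interior lies in the open first quadrant. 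Then any zero $\lambda_\star$ in the interior of that segment automatically satisfies $\Re(\lambda_\star)>0$, and the only boundary point with $G_p\le 0$ is $i\beta_*$ itself where $G_p<0$ strictly, so the sign change occurs strictly inside. The four-fold symmetry of Proposition~\ref{lem:symmetryp} then also gives a corresponding eigenvalue in each quadrant, though the statement only asks for one in the right half-plane.

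The remaining issue is the excluded case $c=0$. When $c=0$ the term $(\Re(c\gamma\lambda T))^2$ in \eqref{eq:GpGqIdentity} vanishes, so $G_p\equiv G_q\le 0$ everywhere, and the above argument collapses: $G_p(\alpha_*)=G_q(\alpha_*)=0$ rather than $>0$, so there is no sign change to exploit. For $c=0$ I would instead argue directly from the structure of Hill's equation: when $c=0$ equation \eqref{eq:p} reduces to $p''+\gamma(\lambda^2+\cos f)p=0$, i.e.\ $Lp=-\gamma\lambda^2 p=\mu p$ with $\mu=-\gamma\lambda^2=-\lambda^2/(c^2-1)=\lambda^2$ (since $\gamma=-1$ when $c=0$), and $\sigma(\mathbf{P})=\sigma(\mathbf{Q})$ by Corollary~\ref{cor:pandqimag} and Lemma~\ref{lem:pandqimag} together with the fact that the exponential factor $e^{-c\gamma\lambda z}$ is trivial. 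Since $f$ is librational, $\mu_0^{(0)}>0$ lies in the Hill spectrum $\Sigma(\mathbf{Q})$, and the relation $\mu=\lambda^2$ with $\mu=\mu_0^{(0)}>0$ yields the real eigenvalue $\lambda_*=\sqrt{\mu_0^{(0)}}>0$, which lies in the right half-plane. (In fact the whole positive band $[\mu_1^{(0)}\!=\!0,\mu_0^{(0)}]$ maps to a segment of the real $\lambda$-axis in $\sigma(\mathbf{P})$.)

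I expect the main obstacle to be bookkeeping around the degenerate $c=0$ case and making sure the IVT path is chosen so the sign change is genuinely interior to the right half-plane rather than on its boundary; the generic-$c$ argument itself is short once Lemmas~\ref{lem:neggp} and~\ref{lem:posgp} are in hand. One should double-check that $\gamma<0$ when $c=0$ (indeed $\gamma=1/(c^2-1)=-1$), so that librational waves — which for subluminal speeds correspond to $0<E<2$ — do produce a positive Hill eigenvalue $\mu_0^{(0)}$; this is exactly the content of the librational dichotomy recorded after Lemma~\ref{lem:mu1}, so no new work is needed there.
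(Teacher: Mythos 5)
Your proposal is correct and follows essentially the same route as the paper's proof: the case $c\neq 0$ is handled by applying the Intermediate Value Theorem to $G_p$ along a segment from $i\beta_*$ (where $G_p<0$ by Lemma~\ref{lem:neggp}) to $\alpha_*$ (where $G_p>0$ by Lemma~\ref{lem:posgp}) whose interior lies in the open right half-plane, and the degenerate case $c=0$ is treated separately by observing that \eqref{eq:defineq} is then the identity, so $\sigmaP=\sigmaQ$ and the positive Hill band supplies a real positive eigenvalue. Your bookkeeping for $c=0$ (including $\gamma=-1$ and $\mu=\lambda^2$) is accurate and matches the paper's argument.
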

\begin{proof}
If $c=0$, then it is obvious that \eqref{eq:defineq} is the identity transformation and hence equations \eqref{eq:p} and \eqref{eq:q} coincide implying $\sigmaP=\sigmaQ$.   Lemma~\ref{lem:posgp} then implies the existence of a real positive temporal eigenvalue $\lambda_*=\alpha_*>0$.

If $c\neq 0$ (but $c^2\neq 1$), let $\beta_*>0$ be as in the statement of Lemma~\ref{lem:neggp} and let $\alpha_*>0$ be as in the statement of Lemma~\ref{lem:posgp}.  Choose a continuous
mapping (parametrized curve) $\lambda:[0,1]\to\C$ for which $\lambda(0)=\lambda_0:=i\beta_*$ and $\lambda(1)=\lambda_1:=\alpha_*$,
and for which $\Re(\lambda(t))>0$ for $0<t\le 1$.  (To be concrete, one might select the straight line $\lambda(t)=\lambda_0 (1-t) + \lambda_1t$.)  Then $G_p(\lambda(t))$ is a continuous function from $[0,1]$ to $\R$, and $G_p(\lambda(0))<0$ from Lemma~\ref{lem:neggp} while $G_p(\lambda(1))>0$ from Lemma~\ref{lem:posgp}.  It follows from the Intermediate Value Theorem that there exists $t_*\in (0,1)$ such that $G_p(\lambda(t_*))=0$, and therefore
$\lambda_*:=\lambda(t_*)$ has a positive real part and $\lambda_*\in\sigmaP$.
\end{proof}
\noindent
The main idea of the proof is illustrated in the left-hand panel of Figure~\ref{fig:IVT}.
\begin{figure}[h]
\begin{center}
\includegraphics{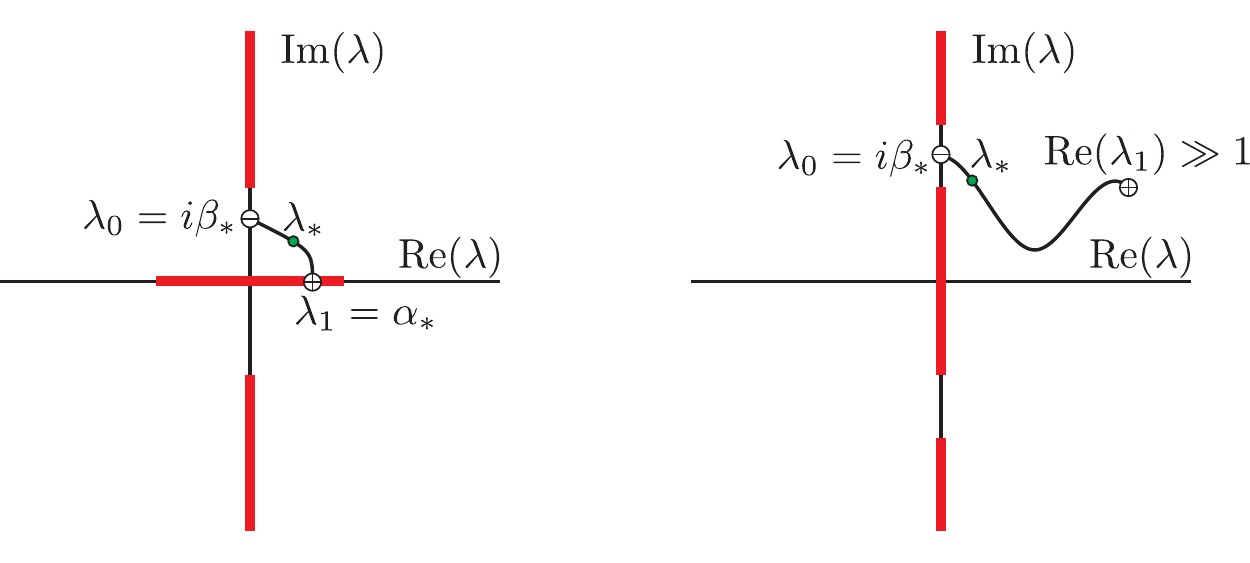}
\end{center}
\caption{Left:  the spectrum $\sigmaQ$ for a librational wave shown in bold (red), and a curve $\lambda=\lambda(t)$ connecting $\lambda_0=i\beta_*$ at which $G_p<0$ with $\lambda_1=\alpha_*>0$ at which $G_p>0$, resulting in a root $\lambda_*$ of $G_p$ (point of $\sigmaP$) in the right half-plane.  Right:  the spectrum $\sigmaQ$ for a rotational wave shown in bold (red), and
a curve $\lambda=\lambda(t)$ connecting $\lambda_0=i\beta_*$ at which $G_p<0$ with a point
$\lambda_1$ with $\Re(\lambda_1)\gg 1$ at which $G_p>0$ if the wave is superluminal, resulting in a root $\lambda_*$ of $G_p$ in the right half-plane.}
\label{fig:IVT}
\end{figure}

\begin{lemma}[Spectral instability of superluminal rotational waves]
Let $f(z)$ be a rotational wave and suppose that $c^2>1$.  Then there exists a temporal eigenvalue $\lambda_*\in\sigmaP$ with $\Re(\lambda_*)>0$.
\label{lem:superrotunstable}
\end{lemma}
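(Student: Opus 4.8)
The plan is to run the same Intermediate Value Theorem argument as in the proof of Lemma~\ref{lem:librationalunstable}, but with Lemma~\ref{lem:largegp} playing the role that Lemma~\ref{lem:posgp} played there. First I would invoke Lemma~\ref{lem:neggp} to fix $\beta_*>0$ with $G_p(i\beta_*)<0$. Since $c^2>1$ we have $\gamma>0$, so Lemma~\ref{lem:largegp} supplies a real number $R>0$ with $G_p(\lambda)>0$ whenever $\Re(\lambda)\ge R$; in particular $G_p(R)>0$.

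Next I would connect $i\beta_*$ to $R$ by the straight-line path $\lambda(t):=(1-t)\,i\beta_*+tR$, $t\in[0,1]$, which satisfies $\Re(\lambda(t))=tR>0$ for $t\in(0,1]$ and stays in the closed right half-plane throughout. By Lemma~\ref{lem:GpGq}, $G_p$ is continuous, so $t\mapsto G_p(\lambda(t))$ is continuous on $[0,1]$ with $G_p(\lambda(0))<0$ and $G_p(\lambda(1))>0$; the Intermediate Value Theorem then produces $t_*\in(0,1)$ with $G_p(\lambda(t_*))=0$. Since (again by Lemma~\ref{lem:GpGq}) the condition $G_p(\lambda)=0$ characterizes membership in $\sigmaP$, the point $\lambda_*:=\lambda(t_*)$ is a temporal eigenvalue with $\Re(\lambda_*)=t_*R>0$, as claimed. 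This is the configuration sketched in the right-hand panel of Figure~\ref{fig:IVT}.

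By this point there is no real obstacle remaining: all of the analytic input — the continuity of $G_p$ and the factorization \eqref{eq:GpGqIdentity} from Lemma~\ref{lem:GpGq}, the negative-$\mu$ spectral gap of Hill's equation that furnishes the imaginary point $i\beta_*$ via Corollary~\ref{cor:mustar} and Lemma~\ref{lem:neggp}, and the large-$\Re(\lambda)$ asymptotics \eqref{eq:logrhoasymp} of the Floquet multipliers of \eqref{eq:p} underlying Lemma~\ref{lem:largegp} — has already been established in the preceding sections, so the lemma itself reduces to a short continuation argument. The one point worth emphasizing is the sign in Lemma~\ref{lem:largegp}: superluminality ($\gamma>0$) is precisely what makes $G_p$ eventually \emph{positive} for large $\Re(\lambda)$, enabling the sign change; in the subluminal rotational case $\gamma<0$ forces $G_p<0$ throughout the far right half-plane, so no temporal eigenvalue can be produced this way, consistent with the stability established in Lemma~\ref{lem:sub_rot}.
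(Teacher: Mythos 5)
Your proposal is correct and follows essentially the same route as the paper's own proof: Lemma~\ref{lem:neggp} supplies the imaginary point where $G_p<0$, Lemma~\ref{lem:largegp} (with $\gamma>0$) supplies a far right-half-plane point where $G_p>0$, and the Intermediate Value Theorem applied to the continuous function $G_p$ along a path with positive real part yields the unstable eigenvalue. The paper merely allows an arbitrary continuous curve where you take the concrete straight-line segment; the argument is otherwise identical.
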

\begin{proof}
Again take $\beta_*>0$ as in the statement of Lemma~\ref{lem:neggp} so that with $\lambda_0:=i\beta_*$ we have $G_p(\lambda_0)<0$.
Pick $\lambda_1$ with $\Re(\lambda_1)>0$ so large that according to Lemma~\ref{lem:largegp}
we have $G_p(\lambda_1)>0$ (because $c^2>1$). 
Choosing any continuous curve $\lambda:[0,1]\to\C$ with $\lambda(0)=\lambda_0$, $\lambda(1)=\lambda_1$,
and $\Re(\lambda(t))>0$ for $0<t\le 1$ and applying the Intermediate Value Theorem to $G_p(\lambda(t))$,  the rest of the proof is exactly as in that of Lemma~\ref{lem:librationalunstable}.
\end{proof}
\noindent The proof is illustrated in the right-hand panel of Figure~\ref{fig:IVT}.
Combining Lemma~\ref{lem:sub_rot}, Lemma~\ref{lem:librationalunstable}, and Lemma~\ref{lem:superrotunstable} completes the proof of Theorem~\ref{th:main}.

\section{Some Results on the Structure of the Floquet Spectrum}
\label{sec:structure}
In this final section, we go beyond the basic (in)stability result of Theorem~\ref{th:main} to deduce some qualitative features of the Floquet spectrum $\sigmaP$ that can be easily obtained by virtually the same methodology already in place.  The first result in this direction is the following.
\begin{proposition}
Let $f(z)$ be a subluminal librational wave.  Then there exists a positive real temporal eigenvalue $\lambda_*\in\sigmaP$, and one of the corresponding Floquet multipliers satisfies $\rho(\lambda_*)=1$ (i.e.\ $\lambda_*$ is a periodic Floquet eigenvalue of \eqref{eq:p}).
\end{proposition}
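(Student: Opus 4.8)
The plan is to show that a positive real temporal eigenvalue $\lambda_*\in\sigmaP$ exists for subluminal librational waves by the same Intermediate Value Theorem technique used in Lemma~\ref{lem:librationalunstable}, and then to upgrade this to a statement about the Floquet multiplier using Lemma~\ref{lem:realp}. First I would invoke Lemma~\ref{lem:posgp}, which (since $c\neq 0$ is not assumed here, so I must handle $c=0$ separately just as in the proof of Lemma~\ref{lem:librationalunstable}) gives a real positive $\alpha_*\in\sigmaQ$ with $G_p(\alpha_*)>0$ when $c\neq 0$. Next I would use Lemma~\ref{lem:largegp}: for subluminal waves, $\sgn(G_p(\lambda))=\sgn(\gamma)<0$ when $\Re(\lambda)$ is sufficiently large. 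Since the paper's convention is that $\gamma<0$ in the subluminal case, pick a real $\lambda_1>0$ large enough that $G_p(\lambda_1)<0$. Now consider the restriction of $G_p$ to the real segment $[\alpha_*,\lambda_1]$ (or $[\lambda_1,\alpha_*]$): it is continuous by Lemma~\ref{lem:GpGq}, strictly positive at $\alpha_*$ and strictly negative at $\lambda_1$, so by the Intermediate Value Theorem there is a point $\lambda_*$ strictly between them with $G_p(\lambda_*)=0$, i.e.\ $\lambda_*\in\sigmaP$, and $\lambda_*>0$ is real. (In the degenerate case $c=0$, equations \eqref{eq:p} and \eqref{eq:q} coincide, so $\alpha_*\in\sigmaQ=\sigmaP$ is itself the desired positive real temporal eigenvalue.)

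For the second assertion, that one of the Floquet multipliers $\rho(\lambda_*)$ satisfies $\rho^2=1$: this is almost immediate from Lemma~\ref{lem:realp}, which says precisely that a real nonzero point of $\sigmaP$ forces one Floquet multiplier to be $\pm 1$ (again assuming $c\neq 0$; in the case $c=0$ equations \eqref{eq:p} and \eqref{eq:q} coincide, Hill's equation is self-adjoint, and a standard argument shows real eigenvalues of Hill's equation are periodic or antiperiodic, so $\rho^2=1$ still holds). So it only remains to rule out the antiperiodic case $\rho(\lambda_*)=-1$ and pin down $\rho(\lambda_*)=1$. By Proposition~\ref{cor:sum}, the relation between the multipliers is $\rho=\pm e^{c\gamma\lambda T}$ only at discriminant-vanishing points, so that is not directly the right tool; instead I expect to argue via Lemma~\ref{lem:floquetpandq}: if $\rho(\lambda_*)=\pm 1$ then the corresponding Hill multiplier is $\eta(\lambda_*)=e^{-c\gamma\lambda_* T}\rho(\lambda_*)$, and $|\eta(\lambda_*)|=1$ forces $\Re(c\gamma\lambda_* T)=0$ only when $\lambda_*$ is imaginary—but our $\lambda_*$ is real and nonzero with $c\neq 0$, so actually $|\eta(\lambda_*)|\neq 1$, meaning $\lambda_*\notin\sigmaQ$ while $\lambda_*\in\sigmaP$, consistent with Lemma~\ref{lem:pandqimag}. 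To distinguish $\rho=+1$ from $\rho=-1$ I would look at which Hill eigenvalue $\mu=(\gamma\lambda_*)^2$ is: the condition $\rho(\lambda_*)=\pm 1$ in \eqref{eq:p} corresponds, after the exponential substitution, to $q$ being periodic or antiperiodic in Hill's equation \eqref{eq:q}, i.e.\ $\mu$ being a periodic ($\mu^{(0)}$) or antiperiodic ($\mu^{(\pi)}$) Hill eigenvalue. Here I should choose $\alpha_*$ to be the specific point with $\mu=\mu_0^{(0)}>0$ (the top of the Hill spectrum, which exists for librational waves by Lemma~\ref{lem:mu1}), and then use the single-gap Lam\'e structure together with oscillation theory: the largest real point of $\sigmaP$ above which $G_p<0$ should be the one with $\rho=1$, since $\mu_0^{(0)}$ is a \emph{periodic} (not antiperiodic) Hill eigenvalue.

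The main obstacle I expect is precisely this last step: cleanly establishing that the multiplier is $+1$ rather than $-1$, i.e.\ tracking the sign of $\rho$ through the correspondence of Lemma~\ref{lem:floquetpandq} and the Hill oscillation theory of \S\ref{sec:hillspec}. The existence of \emph{some} real positive $\lambda_*\in\sigmaP$ and the fact that $\rho^2=1$ there are routine given Lemmas~\ref{lem:posgp}, \ref{lem:largegp}, \ref{lem:realp}; but the refinement $\rho=+1$ requires identifying $\lambda_*$ with the particular temporal eigenvalue sitting over the top periodic Hill eigenvalue $\mu_0^{(0)}$ and verifying that the exponential twist $e^{-c\gamma\lambda_* T}$ does not flip a periodic Hill solution into an antiperiodic behavior of $p$ — which it cannot, since the periodicity/antiperiodicity of $\rho$ is determined by $\rho/|\rho|=\rho e^{-c\gamma\lambda_* T}/|{\cdot}| = \eta/|\eta| \cdot(\text{sign factor})$ and the real positive $\mu_0^{(0)}$ carries $\theta=0$. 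I would phrase the argument so that choosing $\alpha_*$ with $\mu=\mu_0^{(0)}$ makes the associated Bloch phase $\theta=0$, hence $\rho(\lambda_*)$ real and positive on the nose, forcing $\rho(\lambda_*)=1$.
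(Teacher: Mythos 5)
The first two-thirds of your argument is exactly the paper's: the Intermediate Value Theorem applied to $G_p$ along the real segment from $\alpha_*$ (where $G_p>0$ by Lemma~\ref{lem:posgp}) to a large real $\lambda_1$ (where $G_p<0$ by Lemma~\ref{lem:largegp}, since $\gamma<0$ subluminally) produces a real $\lambda_*>0$ in $\sigmaP$, the $c=0$ case is dispatched by the coincidence of \eqref{eq:p} and \eqref{eq:q}, and Lemma~\ref{lem:realp} gives $\rho(\lambda_*)^2=1$. All of that is correct and is how the paper proceeds.

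The refinement $\rho(\lambda_*)=+1$ is where you have a genuine gap, and one of your intermediate claims is false: you assert that $\rho(\lambda_*)=\pm 1$ ``corresponds, after the exponential substitution, to $q$ being periodic or antiperiodic,'' i.e.\ to $\mu=(\gamma\lambda_*)^2$ being a periodic or antiperiodic Hill eigenvalue. It does not: $q=pe^{-c\gamma\lambda_* z}$, so periodicity of $p$ gives $q(z+T)=e^{-c\gamma\lambda_* T}q(z)$ with $|e^{-c\gamma\lambda_* T}|\neq 1$; indeed $\lambda_*>\alpha_*$ forces $\mu(\lambda_*)>\mu_0^{(0)}$, strictly \emph{above} the Hill spectrum, consistent with your own earlier (correct) observation that $\lambda_*\notin\sigmaQ$. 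Consequently the appeal to ``$\mu_0^{(0)}$ is periodic, hence $\theta=0$, hence $\rho(\lambda_*)$ is positive'' does not follow — the Bloch phase at $\mu_0^{(0)}$ says nothing about the sign of a multiplier at the distinct point $\lambda_*$. What is needed (and what the paper supplies) is a sign-tracking argument along the real axis for $\lambda\ge\alpha_*$: at $\lambda=\alpha_*$ the multipliers of \eqref{eq:p} coincide at the positive value $e^{c\gamma\alpha_* T}$ (Lemma~\ref{lem:floquetpandq} applied to $\eta=1$); for $\lambda>\alpha_*$ they are real and distinct; their product $D_p(\lambda)=e^{2c\gamma\lambda T}$ is always positive, so by continuity both multipliers remain positive on $[\alpha_*,\infty)$; hence the unimodular one at $\lambda_*$ must equal $+1$. (Equivalently, for $\mu$ in the trivial gap above $\mu_0^{(0)}$ one has $\Delta_q>2$, so $\eta_\pm>0$ and therefore $\rho_\pm=e^{c\gamma\lambda T}\eta_\pm>0$.) You flagged this step yourself as the obstacle; as written, it is not closed.
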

\begin{proof}
If $c=0$, then equations \eqref{eq:p} and \eqref{eq:q} coincide and hence the positive
real periodic eigenvalue $\lambda_*:=(\mu_0^{(0)}/\gamma^2)^{1/2}>0$ of \eqref{eq:q}
is also a point of $\sigmaP$.  Therefore both Floquet mulipliers of the equivalent problems \eqref{eq:p} and \eqref{eq:q} are equal to $1$ for $\lambda=\lambda_*>0$.

If $c\neq 0$, let $\lambda_0:=\alpha_*(\mu_0^{(0)}/\gamma^2)^{1/2}>0$ be as in the statement of Lemma~\ref{lem:posgp}, so that as $\lambda_0$ is the most positive real point in $\sigmaQ$ we have $G_p(\lambda_0)>0$.  Let $\lambda_1>0$ be so large that by Lemma~\ref{lem:largegp}
we have $G_p(\lambda_1)<0$ (because the wave is subluminal and hence $\gamma<0$).  
Then applying the Intermediate Value Theorem to the continuous function $G_p(\lambda_0(1-t)+\lambda_1t)$ we find a root $t_*\in (0,1)$, and since $\lambda_*:=\lambda_0(1-t_*)+\lambda_1t_*>0$, it is a strictly positive real temporal eigenvalue of \eqref{eq:p}.  It follows from Lemma~\ref{lem:realp} that $\lambda_*$ must be either a periodic or antiperiodic temporal eigenvalue of \eqref{eq:p}.  To see that $\lambda_*$ is in fact a periodic eigenvalue, note that
since $\lambda_0$ is the most positive real value for which the Floquet multipliers $\eta$ of Hill's equation \eqref{eq:q} coincide (with value $\eta=1$), it follows from Lemma~\ref{lem:floquetpandq} that the Floquet multipliers $\rho$ of equation \eqref{eq:p} are coincident at $\lambda_0$ with value
$\rho=e^{c\gamma\lambda_0T}>0$ and are distinct for all $\lambda>\lambda_0$, either forming a distinct real pair or a complex-conjugate pair.  From \eqref{eq:logrhoasymp} in the proof of Lemma~\ref{lem:largegp} it is clear that for sufficiently large real $\lambda$ the multipliers are
real and distinct, and it follows that they are in fact real and distinct for all $\lambda>\lambda_0$.  From
\eqref{eq:determinants} it is obvious that the product of the real multipliers $\rho$ is positive for
all $\lambda\in\R$ and therefore they have the same definite sign (positive, by consideration of the degenerate value for $\lambda=\lambda_0$) for all $\lambda\ge\lambda_0$.  Since one of the multipliers $\rho$ satisfies $\rho^2=1$ for $\lambda=\lambda_*>\lambda_0$ it follows that in fact $\rho=1>0$.
\end{proof}

The next result concerns the nature of the spectrum $\sigmaP$ near the origin in the complex $\lambda$-plane; temporal eigenvalues can only accumulate at $\lambda=0$ (see Figure \ref{fig:lib_spec_origin}).
\begin{proposition}
Let $f$ be a librational wave of any speed $c\neq \pm 1$, and let $\sigmaP^\circ$ denote the corresponding Floquet spectrum of \eqref{eq:p} with points in the union of the real and imaginary $\lambda$-axes omitted.  There is an open neighborhood $U$ of the origin such that $\lambda=0$ is the only limit point of $\sigmaP^\circ\cap U$ lying in the union of the real and imaginary axes.
\label{prop:limit}
\end{proposition}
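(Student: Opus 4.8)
The engine of the proof will be the continuous function $G_p$ of Lemma~\ref{lem:GpGq}, the characterization $\lambda\in\sigmaP\iff G_p(\lambda)=0$, and the identity \eqref{eq:GpGqIdentity}, $G_p(\lambda)=(\Re(c\gamma\lambda T))^2+G_q(\lambda)$, together with $G_q\le 0$ everywhere and $G_q(\lambda)=0\iff\lambda\in\sigmaQ$. The plan is to prove the sharper statement that there is $\epsilon>0$ so that $G_p(\lambda_0)\neq 0$ for \emph{every} $\lambda_0\in(\R\cup i\R)$ with $0<|\lambda_0|\le\epsilon$. Granting this, continuity of $G_p$ gives each such $\lambda_0$ an open neighborhood on which $G_p$ never vanishes, hence a neighborhood disjoint from $\sigmaP\supseteq\sigmaP^\circ$; thus with $U$ the open disk of radius $\epsilon$ about $0$, any limit point of $\sigmaP^\circ\cap U$ lies in $\{|\lambda|\le\epsilon\}$, so if it lies on $\R\cup i\R$ it must be $0$, which is precisely the assertion.

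If $c=0$ the claim is vacuous: the substitution \eqref{eq:defineq} is then the identity, so $\sigmaP=\sigmaQ$ lies on the real and imaginary axes (Hill's equation being selfadjoint) and $\sigmaP^\circ=\emptyset$. So assume $c\neq 0$, hence $\gamma\neq 0$. The choice of $\epsilon$ is governed by where $\mu=\gamma^2\lambda^2$ sits relative to the Hill spectrum $\Sigma({\bf Q})$ when $\lambda$ is a small axis point, and the relevant facts are exactly the ones recorded for librational waves in \S\ref{sec:hillspec} and Lemma~\ref{lem:mu1}: $\mu_1^{(0)}=0$ is a periodic eigenvalue, the band $[\mu_1^{(0)},\mu_2^{(\pi)}]\subseteq\Sigma({\bf Q})$ abuts it on the right with $\mu_2^{(\pi)}>0$, and $(\mu_2^{(0)},0)$ is a genuinely \emph{open} gap since $\mu_2^{(0)}<0$. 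Pick $\epsilon>0$ with $\gamma^2\epsilon^2<\min\{\mu_2^{(\pi)},\,-\mu_2^{(0)}\}$. For real $\lambda_0$ with $0<|\lambda_0|\le\epsilon$ we get $0<\mu_0:=\gamma^2\lambda_0^2<\mu_2^{(\pi)}$, so $\mu_0\in\Sigma({\bf Q})$, i.e.\ $\lambda_0\in\sigmaQ$ and $G_q(\lambda_0)=0$; hence by \eqref{eq:GpGqIdentity}, $G_p(\lambda_0)=(\Re(c\gamma\lambda_0 T))^2+G_q(\lambda_0)=(c\gamma T\lambda_0)^2>0$. For imaginary $\lambda_0=i\omega_0$ with $0<|\omega_0|\le\epsilon$ we get $\mu_0=\gamma^2\lambda_0^2=-\gamma^2\omega_0^2\in(\mu_2^{(0)},0)$, which lies in the gap, so $\mu_0\notin\Sigma({\bf Q})$, i.e.\ $\lambda_0\notin\sigmaQ$; then \eqref{eq:GqInequality} and the detection property in Lemma~\ref{lem:GpGq} force $G_q(\lambda_0)<0$, and since $\Re\lambda_0=0$ we get $G_p(\lambda_0)=G_q(\lambda_0)<0$. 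Either way $G_p(\lambda_0)\neq 0$, completing the argument.

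I do not expect a serious obstacle here: once \eqref{eq:GpGqIdentity} is invoked the whole proof is the band/gap bookkeeping at $\mu=0$, all supplied by Lemma~\ref{lem:mu1} and \S\ref{sec:hillspec}. The one point to attend to is that the inequalities off the origin are \emph{strict} — this needs the gap $(\mu_2^{(0)},0)$ to be nonempty (Lemma~\ref{lem:mu1}) and a nondegenerate band on its upper side, plus $c\neq 0$ so that the $(\Re(c\gamma\lambda T))^2$ term is genuinely positive on the real axis. The statement does not ask one to show that $\lambda=0$ \emph{is} a limit point of $\sigmaP^\circ$, and I would not pursue that here; but it follows from the same identity by a finer local expansion: at the band edge $\mu=0$ the Hill multipliers exhibit square-root behavior and $d\Delta_q/d\mu<0$ (Lemma~\ref{lem:mu1}), which yields $G_p(\lambda)=\gamma^2\big[(cT)^2(\Re\lambda)^2-a(\Im\lambda)^2\big]+o(|\lambda|^2)$ for some $a>0$, an indefinite quadratic form whose zero set near $0$ is to leading order a pair of lines through the origin transverse to both axes — consistent with Figure~\ref{fig:lib_spec_origin} and with the detailed picture promised in \cite{JMMP2}.
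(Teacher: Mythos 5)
Your uniqueness argument is correct and rests on exactly the same ingredients as the paper's: Lemma~\ref{lem:mu1} (so that the gap $(\mu_2^{(0)},0)$ is open and nonempty), the nondegeneracy of the band $[0,\mu_2^{(\pi)}]$, and the fact that on the imaginary axis $\sigmaP$ and $\sigmaQ$ coincide while off it they are disjoint. You package this through the sign of $G_p$ on a small punctured neighborhood of the origin in $\R\cup i\R$ (positive on the real axis because those points lie in $\sigmaQ$ and $c\neq 0$ makes $(\Re(c\gamma\lambda T))^2>0$; negative on the imaginary axis because those points fall in the gap) and then invoke continuity of $G_p$. The paper instead takes an arbitrary limit point $\lambda$ on the axes, notes $\lambda\in\sigmaP$ by closedness of the spectrum, and derives a contradiction from Corollary~\ref{cor:pandqimag} in the imaginary case and from Lemma~\ref{lem:pandqimag} in the real case. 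These are two dressings of essentially the same argument; yours has the minor advantage of producing an explicit admissible radius $\epsilon$ in terms of the band edges $\mu_2^{(\pi)}$ and $\mu_2^{(0)}$.

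The one substantive divergence is that the paper's proof has a first half that you omit: it \emph{proves} that limit points of $\sigmaP^\circ$ on the union of the axes exist, by applying the Intermediate Value Theorem to $G_p$ along the one-parameter family of curves $\lambda_p(t)=\lambda_0(1-t)^p+\lambda_1 t^p$ joining $i\beta_*$ to $\alpha_*$; these curves collapse onto $[0,i\beta_*]\cup[0,\alpha_*]$ as $p\to\infty$, so the resulting eigenvalues $\lambda_{*,p}\in\sigmaP^\circ$ accumulate on that union and hence, by the uniqueness half, at the origin. If one reads ``$\lambda=0$ is the only limit point'' as asserting in particular that $0$ \emph{is} a limit point, your proof is incomplete: the local expansion of $G_p$ near the origin that you sketch at the end is plausible but not established here (the paper defers precisely that computation to \cite{JMMP2}), whereas the paper's curve-family argument needs nothing beyond Lemmas~\ref{lem:neggp} and \ref{lem:posgp}. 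Your weaker reading, on which only the containment is claimed, is the one under which the $c=0$ case (where $\sigmaP^\circ=\emptyset$) remains consistent with the statement, so the interpretive point is defensible; but you should be aware that the paper's own proof supplies the existence and yours does not.
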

\begin{proof}
By Proposition~\ref{lem:symmetryp} it suffices to consider the part of $\sigmaP^\circ$ in the first
quadrant of the complex $\lambda$-plane.  Let $\lambda_0=i\beta_*$ with $\beta_*>0$ being as in the statement of Lemma~\ref{lem:neggp}
and let $\lambda_1=\alpha_*$ with $\alpha_*>0$ being as in the statement of Lemma~\ref{lem:posgp}.  Let $U$ be an open disk centered at the origin with radius $\tfrac{1}{2}\min\{|\lambda_0|,|\lambda_1|\}$.  

We first prove that there exist limit points of $\sigmaP^\circ$ on the union of the real and imaginary $\lambda$-axes.  Consider the one-parameter family of curves connecting $\lambda_0$ with $\lambda_1$ with parameter $p>0$:  $\lambda=\lambda_p(t):=\lambda_0(1-t)^p+\lambda_1t^p$ for $0\le t\le 1$.  Exactly as in the proof of Lemma~\ref{lem:librationalunstable}, the Intermediate Value Theorem provides for each $p>0$ a point $\lambda=\lambda_{*,p}\in\sigmaP$ that is an interior point of the curve $\lambda=\lambda_p(t)$ (see the left-hand panel of Figure~\ref{fig:lib_spec_origin}).  As a bounded subset of $\C$, the sequence $\{\lambda_{*,p}\}_{p=1}^\infty$ has limit points, and as the curve $\lambda=\lambda_p(t)$ with $0\le t\le 1$ approaches the union of the real segment $[0,\alpha_*]$ and the imaginary segment $[0,i\beta_*]$ as $p\to\infty$, the limit points of the sequence necessarily lie on this union of segments.

Now let $\lambda$ denote \emph{any} limit point of $\sigmaP^\circ$ lying on the union of the segments $[0,\alpha_*]$ and $[0,i\beta_*]$.  Because $\sigmaP$ is a closed set, it follows that $\lambda\in\sigmaP$ as well.  If $\Im(\lambda)>0$ and $\Re(\lambda)=0$,
then as $\Im(\lambda)<\beta_*$ we have $\lambda\not\in\sigmaQ$ which by Corollary~\ref{cor:pandqimag} contradicts the fact that $\lambda\in\sigmaP$.  On the other hand, if $\Re(\lambda)>0$ and $\Im(\lambda)=0$, then as $\Re(\lambda)<\alpha_*$ we have  $\lambda\in\sigmaQ$ which by Lemma~\ref{lem:pandqimag} contradicts again the fact that $\lambda\in\sigmaP$.  It therefore follows that $\lambda=0$.
\end{proof}

\begin{figure}[h] 
\begin{center}
\includegraphics{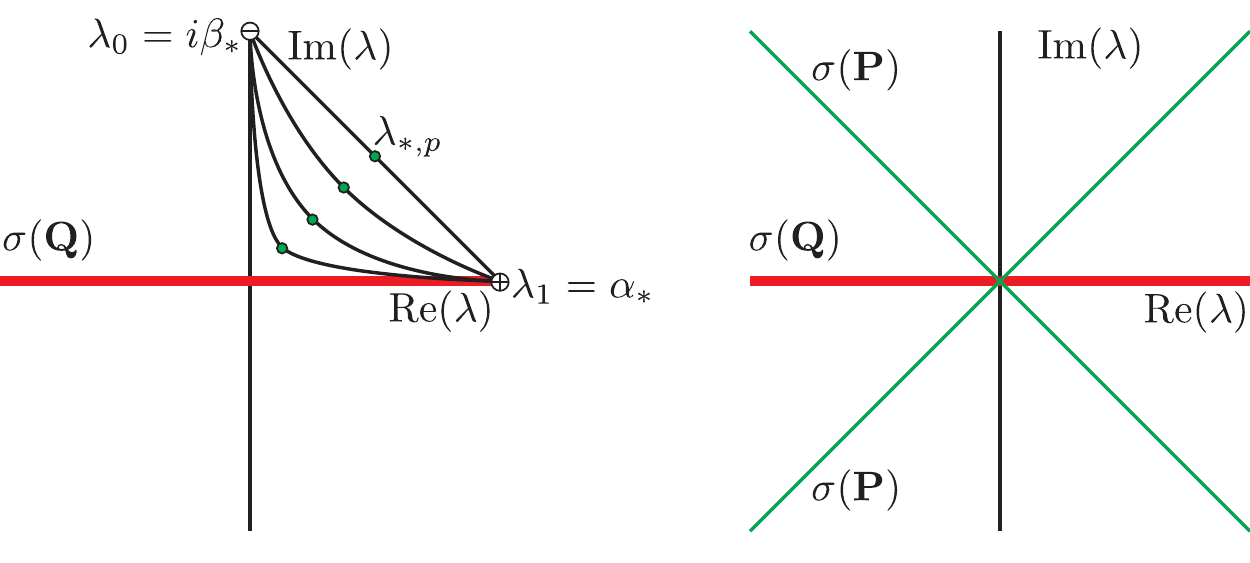}
\end{center}
\caption{Left:  a magnification of the neighborhood of the origin from the left-hand panel of Figure~\ref{fig:IVT} showing the family $\lambda=\lambda_p(t)$ of curves from the proof of Proposition~\ref{prop:limit} and the corresponding points $\lambda_{*,p}\in\sigmaP$ converging to the origin.  Right:  the local structure of the spectrum near the origin for librational waves (both sub- and superluminal) consists of a union of two crossing analytic arcs.}
\label{fig:lib_spec_origin}
\end{figure}

It is an easy consequence of the fact that the elements of the monodromy matrix $\M_p(\lambda)$ are entire analytic functions of $\lambda$ that for each $R>0$, $\sigmaP\cap \{|\lambda|<R\}$ is 
a finite union of analytic arcs (\emph{arcs of spectrum}).  Together with Proposition~\ref{prop:limit} this implies that for $R>0$ sufficiently small, $\sigmaP\cap\{|\lambda|<R\}$ is a finite union of arcs of spectrum meeting at the origin and having the full Hamiltonian symmetry implied by Proposition~\ref{lem:symmetryp}.   To deduce further details of the local structure of $\sigmaP$ near $\lambda=0$ (for example the number of arcs of spectrum meeting at the origin and their asymptotic form in the limit of small $|\lambda|$) requires methods beyond those presented in this paper.  However, in a forthcoming paper \cite{JMMP2} we carry out a complete local analysis
by computing the Taylor expansion of $\M_p(\lambda)$ about $\lambda=0$, and this method shows that for librational waves, $\sigmaP$ near the origin consists of exactly two curves crossing transversely with opposite slopes as illustrated qualitatively in the right-hand panel of Figure~\ref{fig:lib_spec_origin}.  The nature of the Floquet spectrum near the origin is of crucial importance in relating spectral stability/instability to Whitham's so-called \emph{modulational stability criterion} \cite{Wh} (see also \cite{JZ1}).

\section*{Acknowledgements}
RM would like to thank Jared Bronski, Chris Cosgrove, Georg Gottwald, Yuri Latushkin, and Davide Masoero for many productive conversations regarding the spectrum of the periodic sine-Gordon operator, and would like to acknowledge the support of the Australian Research Council grant ARC- DP110102775. PDM thanks Richard Koll\'ar for useful conversations and gratefully acknowledges the support of the National Science Foundation under grants DMS-0807653 and DMS-1206131.  RGP warmly thanks Tim Minzoni and Panayiotis Panayotaros for enlightening conversations on periodic traveling waves and their stability.

\section*{Appendix:  Writing \eqref{eq:q} as Lam\'e's Equation}
Lam\'e's equation \cite[\S29.2(i)]{NIST:DLMF} is
\begin{equation}
\frac{d^2w}{d\zeta^2}+(h-\nu(\nu+1)k^2\mathrm{sn}^2(\zeta,k))w=0,
\label{eq:Lame}
\end{equation}
where $\nu\ge -\tfrac{1}{2}$ and $k\in (0,1)$ is an elliptic modulus, and where $\mathrm{sn}(\zeta,k)$ denotes the Jacobi elliptic function with argument $z$ and modulus $k$, which satisfies the first-order nonlinear equation \cite[\S22.13(ii)]{NIST:DLMF}
\begin{equation}
\left(\frac{du}{d\zeta}\right)^2=(1-u^2)(1-k^2u^2).
\label{eq:sn-eqn}
\end{equation}
The fact that elliptic functions are involved essentially follows from \eqref{eq:pendint} satisfied by the traveling wave profile $f(z)$.  The calculations are slightly different for each of the four types of periodic traveling waves.
\subsection*{Subluminal ($\gamma<0$) rotational ($E<0$) waves}
Consider the relation
\begin{equation}
\cos(f(z))=-1+2u^2.
\label{eq:subrot-relation}
\end{equation}
Differentiating \eqref{eq:subrot-relation}, squaring, using the Pythagorean identity $\sin^2(f)+\cos^2(f)=1$, and substituting again from \eqref{eq:subrot-relation} yields the equation
\begin{equation}
4\left(\frac{du}{dz}\right)^2=(1-u^2)\left(\frac{df}{dz}\right)^2.
\label{eq:star-one}
\end{equation}
Next, substitution of \eqref{eq:subrot-relation} into \eqref{eq:pendint} gives
\begin{equation}
\left(\frac{df}{dz}\right)^2=-2\gamma(2-E)(1-k^2u^2),\quad k:=\sqrt{\frac{2}{2-E}}\in (0,1).
\end{equation}
Eliminating $f'(z)^2$ therefore yields \eqref{eq:sn-eqn} with $\zeta=[\tfrac{1}{2}(-\gamma)(2-E)]^{1/2}(z-z_0)$ for arbitrary constant $z_0\in\mathbb{R}$, and hence
\begin{equation}
u=\mathrm{sn}\left(\left[\frac{1}{2}(-\gamma)(2-E)\right]^{1/2}(z-z_0),\sqrt{\frac{2}{2-E}}\right).
\end{equation}
With the use of \eqref{eq:subrot-relation} it is then clear that \eqref{eq:q} takes the form of Lam\'e's equation \eqref{eq:Lame} with
\begin{equation}
k:=\sqrt{\frac{2}{2-E}},\quad h:=k^2\left[1-\frac{2\mu}{(-\gamma)}\right],\quad\zeta:=\left[\frac{1}{2}(-\gamma)(2-E)\right]^{1/2}(z-z_0),\quad \nu=1.
\end{equation}
\subsection*{Superluminal ($\gamma>0$) rotational ($E>2$) waves}
Consider instead the relation
\begin{equation}
\cos(f(z))=1-2u^2,
\label{eq:suprot-relation}
\end{equation}
which by the same steps as in the subluminal rotational case implies \eqref{eq:star-one}.  Using \eqref{eq:suprot-relation} in \eqref{eq:pendint} gives
\begin{equation}
\left(\frac{df}{dz}\right)^2=2\gamma E(1-k^2u^2),\quad k:=\sqrt{\frac{2}{E}}\in (0,1).
\end{equation}
Using this to eliminate $f'(z)^2$ from \eqref{eq:star-one} then yields \eqref{eq:sn-eqn} with $\zeta=[\tfrac{1}{2}\gamma E]^{1/2}(z-z_0)$ for constant $z_0\in\mathbb{R}$.  Therefore
\begin{equation}
u=\mathrm{sn}\left(\left[\frac{1}{2}\gamma E\right]^{1/2}(z-z_0),\sqrt{\frac{2}{E}}\right),
\end{equation}
and using \eqref{eq:suprot-relation} shows that in this case \eqref{eq:q} is also of the form of Lam\'e's equation
\eqref{eq:Lame} with
\begin{equation}
k:=\sqrt{\frac{2}{E}},\quad h:=k^2\left[1-\frac{\mu}{\gamma}\right],\quad\zeta:=\left[\frac{1}{2}\gamma E\right]^{1/2}(z-z_0),\quad \nu=1.
\end{equation}
\subsection*{Subluminal ($\gamma<0$) librational ($0<E<2$) waves}
The correct substitution in this case is
\begin{equation}
\cos(f(z))=-1+(2-E)u^2
\label{eq:sublib-relation}
\end{equation}
which in combination with \eqref{eq:pendint} leads to \eqref{eq:sn-eqn} with $k=[\tfrac{1}{2}(2-E)]^{1/2}\in (0,1)$ and $\zeta=[-\gamma]^{1/2}(z-z_0)$ and hence
\begin{equation}
u=\mathrm{sn}\left(\left[-\gamma\right]^{1/2}(z-z_0),\sqrt{\frac{2-E}{2}}\right).
\end{equation}
Using \eqref{eq:sublib-relation} then shows that \eqref{eq:q} takes the form of Lam\'e's equation
\eqref{eq:Lame} with
\begin{equation}
k:=\sqrt{\frac{2-E}{2}},\quad h:= 1-\frac{\mu}{(-\gamma)},\quad\zeta:=[-\gamma]^{1/2}(z-z_0),\quad \nu=1.
\end{equation}
\subsection*{Superluminal ($\gamma>0$) librational ($0<E<2$) waves}
Finally, in this case we use
\begin{equation}
\cos(f(z))=1-Eu^2,
\label{eq:suplib-relation}
\end{equation}
which together with \eqref{eq:pendint} gives \eqref{eq:sn-eqn} with $k=[\tfrac{1}{2}E]^{1/2}\in (0,1)$
and $\zeta=\gamma^{1/2}(z-z_0)$.  Therefore
\begin{equation}
u=\mathrm{sn}\left(\gamma^{1/2}(z-z_0),\sqrt{\frac{E}{2}}\right).
\end{equation}
Combining this result with \eqref{eq:suplib-relation} shows that \eqref{eq:q} is again Lam\'e's equation \eqref{eq:Lame} with
\begin{equation}
k:=\sqrt{\frac{E}{2}},\quad h:=1-\frac{\mu}{\gamma},\quad\zeta:=\gamma^{1/2}(z-z_0),\quad \nu=1.
\end{equation}
\subsection*{The Floquet spectrum of Lam\'e's equation for $\nu=1$}
In all four cases we have Lam\'e's equation \eqref{eq:Lame} with the constant parameter $\nu=1$.
The Floquet spectrum of Lam\'e's equation is well-understood, and the relevant information
can be found in \cite[\S29.3(i)--(ii), \S29.9]{NIST:DLMF}.  In passing from the notation of \cite{NIST:DLMF} to our notation, we need to recall the relationship between $\mu$ and $h$ in each case (note in particular that $dh/d\mu<0$ in all cases) and also keep in mind that for both types of librational waves the fundamental period $T$ we use to compute the monodromy actually corresponds to
\emph{twice} the fundamental period of the coefficient in Lam\'e's equation.  It then follows that
since $\nu=1$, the only open gap in $\Sigma({\bf Q})$ (not counting the ``trivial gap'' $\mu>\mu_0^{(0)}$) is the interval $\mu_2^{(\pi)}<\mu<\mu_1^{(\pi)}$
for both types of rotational waves and the interval $\mu_2^{(0)}<\mu<\mu_1^{(0)}$ for both types of librational waves.  These exact results are illustrated in Figure~\ref{fig:qspec}.

\bibliography{JMMP}
\bibliographystyle{amsalpha}


\end{document}